\providecommand{\U}[1]{\protect\rule{.1in}{.1in}}
\newtheorem{theorem}{Theorem}
\newtheorem{corollary}{Corollary}
\newtheorem{lemma}{Lemma}[section]
\newtheorem{proposition}{Proposition}
\newenvironment{proof}[1][Proof]{\textbf{#1.} }{\ \rule{1em}{1em}}
\numberwithin{equation}{section}
\begin{document}

\title{Instability of nonlinear dispersive solitary waves }
\author{Zhiwu Lin\\Mathematics Department\\University of Missouri\\Columbia, MO 65211 USA}
\date{}
\maketitle

\begin{abstract}
We consider linear instability of solitary waves of several classes of
dispersive long wave models. They include generalizations of KDV, BBM,
regularized Boussinesq equations, with general dispersive operators and
nonlinear terms. We obtain criteria for the existence of exponentially growing
solutions to the linearized problem. The novelty is that we dealt with models
with nonlocal dispersive terms, for which the spectra problem is out of reach
by the Evans function technique. For the proof, we reduce the linearized
problem to study a family of nonlocal operators, which are closely related to
properties of solitary waves. A continuation argument with a moving kernel
formula are used to find the instability criteria. Recently, these techniques
have also been extended to study instability of periodic waves and to the full
water wave problem.

\end{abstract}

\section{Introduction}

We consider the stability and instability of solitary wave solutions of
several classes of equations modeling weakly nonlinear, dispersive long waves.
More specifically, we establish criteria for the linear exponential
instability of solitary waves of BBM, KDV, and regularized Boussinesq type
equations. These equations respectively have the forms:

1. BBM type%

\begin{equation}
\partial_{t}u+\partial_{x}u+\partial_{x}f\left(  u\right)  +\partial
_{t}\mathcal{M}u=0; \label{BBM}%
\end{equation}

2. KDV type
\begin{equation}
\partial_{t}u+\partial_{x}f\left(  u\right)  -\partial_{x}\mathcal{M}u=0;
\label{kdv}%
\end{equation}

3. Regularized Boussinesq (RBou) type%
\begin{equation}
\partial_{t}^{2}u-\partial_{x}^{2}u-\partial_{x}^{2}f\left(  u\right)
+\partial_{t}^{2}\mathcal{M}u=0. \label{RBOU}%
\end{equation}
Here, the pseudo-differential operator $\mathcal{M}$ is defined as
\[
\left(  \mathcal{M}g\right)  \left(  k\right)  =\alpha\left(  k\right)
\hat{g}\left(  k\right)  ,
\]
where $\hat{g}$ is the Fourier transformation of $g$. Throughout this paper,
we assume: i) $f$ is $C^{1}$ with $f\left(  0\right)  =f^{\prime}\left(
0\right)  =0,\ $and $f\left(  u\right)  /u\rightarrow\infty.\ $ii)
$a\left\vert k\right\vert ^{m}\leq\alpha\left(  k\right)  \leq b\left\vert
k\right\vert ^{m}$ for large $k$, where $m\geq1$ and $a,b>0$. If $f\left(
u\right)  =u^{2}$ and $\mathcal{M}=-\partial_{x}^{2}$, the above equations
recover the original BBM (\cite{bbm72}), KDV (\cite{KDV}), and regularized
Boussinesq (\cite{whitham}) equations, which have been used to model the
unidirectional propagation of water waves of long wavelengths and small
amplitude. As explained in \cite{bbm72}, the nonlinear term $f\left(
u\right)  $ is related to nonlinear effects suffered by the waves being
modeled, while the form of the symbol $\alpha$ is related to dispersive and
possibly, dissipative effects. If $\alpha\left(  k\right)  $ is a polynomial
function of $k$, then $\mathcal{M}$ is a differential operator and in
particular is a local operator. On the other hand, in many situations in fluid
dynamics and mathematical physics, equations of the above type arise in which
$\alpha\left(  k\right)  $ is not a polynomial and hence the operator
$\mathcal{M}$ is nonlocal. Some examples include: Benjamin-Ono equation
(\cite{bno}), Smith equation (\cite{smith}) and intermediate long-wave
equation (\cite{ILW-eqn}), which are all of KDV type with $\alpha\left(
k\right)  =\left\vert k\right\vert ,\ \sqrt{1+k^{2}}-1$ and $k\coth\left(
kH\right)  -H^{-1}$ respectively.

Below we assume $\alpha\left(  k\right)  \geq0$, since the results and proofs
can be easily modified for cases of sign-changing symbols (see Section 5(b)).
Each of the equations (\ref{BBM})-(\ref{RBOU}) admits solitary-wave solutions
of the form $u\left(  x,t\right)  =u_{c}\left(  x-ct\right)  $ for
$c>1,c>0,c^{2}>1$ respectively, where $u_{c}\left(  x\right)  \rightarrow0$ as
$\left\vert x\right\vert \rightarrow\infty$. For example, the KDV solitary
wave solutions have the form (\cite{KDV})
\[
u_{c}\left(  x\right)  =3c\operatorname{sech}^{2}\left(  \sqrt{c}x/2\right)
\]
and for the Benjamin-Ono equation (\cite{bno})
\[
u_{c}\left(  x\right)  =\frac{4c}{1+c^{2}x^{2}}.
\]
For a broad class of symbols $\alpha\,$, the existence of solitary-wave
solutions has been established (\cite{benjamin73}, \cite{benjamin90}). For
many equations such as the classical KDV and BBM, the solitary waves are
positive, symmetric and single-humped. But the oscillatory solitary waves are
not uncommon (\cite{amt2}, \cite{abr-benjamin}), especially for the sign
changing $\alpha\left(  k\right)  $. In our study, we do not assume any
additional property of solitary waves, besides their decay at infinity. We
consider the linearized equations around solitary waves in the traveling frame
$\left(  x-ct,t\right)  $ and seek a growing mode solution of the form
$e^{\lambda t}u\left(  x\right)  $ with $\operatorname{Re}\lambda>0$. Define
the operator $\mathcal{L}_{0}$ by (\ref{L0-BBM}), (\ref{L0-KDV}), and
(\ref{L0-RBou}), and the momentum function $P\left(  c\right)  $ by
(\ref{P-BBM}), (\ref{P-KDV}), and (\ref{P-RBou}), for BBM, KDV and RBou type
equations respectively.

\begin{theorem}
\label{THM: main}For solitary waves $u_{c}\left(  x-ct\right)  \ $of equations
(\ref{BBM})-(\ref{RBOU}), we assume
\begin{equation}
\ker\mathcal{L}_{0}=\left\{  u_{cx}\right\}  . \label{assum-kernel}%
\end{equation}
Denote by $n^{-}\left(  \mathcal{L}_{0}\right)  $ the number (counting
multiplicity) of negative eigenvalues of the operators $\mathcal{L}_{0}$. Then
there exists a purely growing mode $e^{\lambda t}u\left(  x\right)  $ with
$\lambda>0,\ u\in H^{m}\left(  \mathbf{R}\right)  $ to the linearized
equations (\ref{L-bbm}), (\ref{L-KDV}) and (\ref{L-RBou}), if one of the
following two conditions is true:

(i) $n^{-}\left(  \mathcal{L}_{0}\right)  $ is even and $dP/dc>0.$

(ii) $n^{-}\left(  \mathcal{L}_{0}\right)  $ is odd and $dP/dc<0.$
\end{theorem}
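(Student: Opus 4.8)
The plan is to recast the growing-mode problem as a question about a one-parameter family of self-adjoint operators and then to detect a growing mode by an eigenvalue-counting (continuation) argument.

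First I would substitute the ansatz $e^{\lambda t}u(x)$ into each of the linearized equations and put all three cases into the common Hamiltonian form $\lambda u=\mathcal{J}\mathcal{L}_0 u$, where $\mathcal{J}$ is skew-adjoint on $L^2(\mathbf{R})$ and $\mathcal{L}_0$ is the self-adjoint operator appearing in (\ref{assum-kernel}). For BBM type one has $\mathcal{J}=(1+\mathcal{M})^{-1}\partial_x$, for KDV type $\mathcal{J}=\partial_x$, and the RBou type reduces to the same framework after factoring the second-order-in-time operator. Because $\mathcal{M}$ is nonlocal, the resulting spectral problem is not amenable to the Evans-function/ODE shooting approach, which is exactly the difficulty the argument must circumvent. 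A first observation is that any growing mode satisfies a mean-zero (range) condition, so that $v:=\partial_x^{-1}u$ is well defined and decaying; integrating the equation once then yields a reduced relation between $u$ and $v$ in which $\lambda$ enters on the self-adjoint side.

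The core of the proof is to convert this into a genuinely self-adjoint family $\{\mathcal{L}_\lambda\}_{\lambda\ge 0}$, depending analytically on $\lambda$ (indeed on $\lambda^2$), with $\mathcal{L}_\lambda|_{\lambda=0}=\mathcal{L}_0$ on the mean-zero subspace, arranged so that $\lambda>0$ is the rate of a purely growing mode $u\in H^m(\mathbf{R})$ if and only if $\ker\mathcal{L}_\lambda\neq\{0\}$. I would then establish the two structural facts that drive the counting: (a) each $\mathcal{L}_\lambda$ is bounded below with finitely many negative eigenvalues, and the integer $n^-(\mathcal{L}_\lambda)$ is constant on intervals free of singular $\lambda$, jumping only when an eigenvalue crosses $0$, i.e.\ only at candidate growing-mode rates; and (b) as $\lambda\to+\infty$ the $\lambda$-dependent term dominates and $\mathcal{L}_\lambda$ becomes positive definite, so $n^-(\mathcal{L}_\lambda)=0$ for all large $\lambda$.

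It then remains to compute $n^-(\mathcal{L}_\lambda)$ for small $\lambda>0$ and to compare. As $\lambda\to 0^+$ the operator converges to $\mathcal{L}_0$, whose negative subspace has dimension $n^-(\mathcal{L}_0)$ and whose kernel is spanned by $u_{cx}$. The decisive step is a moving-kernel formula: differentiating the profile equation in $c$ gives $\mathcal{L}_0\,\partial_c u_c=-\mathcal{A}u_c$ (with $\mathcal{A}=1+\mathcal{M}$, resp.\ $\mathcal{A}=1$), and pairing this identity with $\partial_c u_c$ identifies the leading perturbation of the zero eigenvalue of $\mathcal{L}_0$ along the family with the sign of $dP/dc$, since $P'(c)=-\langle\mathcal{L}_0\,\partial_c u_c,\partial_c u_c\rangle$. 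Consequently $n^-(\mathcal{L}_{0^+})$ equals $n^-(\mathcal{L}_0)$ when the crossing is upward and $n^-(\mathcal{L}_0)+1$ when it is downward, the direction being fixed by the sign of $dP/dc$. Combining this with $n^-(\mathcal{L}_\lambda)=0$ near $\lambda=+\infty$ and with the parity of $n^-(\mathcal{L}_0)$, an intermediate-value argument on the integer-valued function $n^-(\mathcal{L}_\lambda)$ forces at least one singular $\lambda_*\in(0,\infty)$ in precisely the two cases (i) and (ii); at that $\lambda_*$ a kernel element yields the desired purely growing mode, whose $H^m$ regularity follows by bootstrapping in the equation.

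The main obstacle I anticipate is the construction and control of the self-adjoint family $\mathcal{L}_\lambda$ in the nonlocal setting: proving it is well defined, self-adjoint and bounded below with discrete negative spectrum, that its kernel corresponds exactly (and only) to growing modes, and that its eigenvalues depend continuously on $\lambda$ so that the crossing count is rigorous. Closely tied to this is making the moving-kernel computation precise, because the kernel of $\mathcal{L}_\lambda$ genuinely moves with $\lambda$; one must track the perturbed eigenvalue rather than a fixed one and pin down the sign relating it to $dP/dc$ uniformly across the BBM, KDV, and RBou cases.
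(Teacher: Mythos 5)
Your overall skeleton --- a continuation in the growth rate $\lambda$, comparing a spectral count near $\lambda=0$ (where the limit operator is $\mathcal{L}_{0}$) with the count near $\lambda=\infty$ (where it vanishes), with the direction of motion of the zero eigenvalue tied to the sign of $dP/dc$ and the parity of $n^{-}\left(\mathcal{L}_{0}\right)$ --- is exactly the paper's strategy. But the device you hang it on does not exist as described, and this is the central gap. Solving the growing-mode equation for the dispersive term produces (in the BBM case) the family
\[
\mathcal{A}^{\lambda}u=\mathcal{M}u+u+\frac{\partial_{x}}{\lambda-c\partial_{x}}\left(  u+f^{\prime}\left(  u_{c}\right)  u\right)  ,
\]
and the multiplier $\partial_{x}/\left(\lambda-c\partial_{x}\right)$ has the genuinely complex symbol $ik/\left(\lambda-ick\right)$: the resulting family is \emph{not} self-adjoint, is not a function of $\lambda^{2}$, and is not symmetrizable in any evident way (for RBou the cross term $-2\lambda c\partial_{x}$ in $\left(\lambda-c\partial_{x}\right)^{2}$ is odd in $\lambda$ and skew). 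You never construct the self-adjoint family $\mathcal{L}_{\lambda}$ on which your step (a) --- ``$n^{-}\left(\mathcal{L}_{\lambda}\right)$ jumps only when an eigenvalue crosses $0$'' --- rests, and this is not a technicality: coping with the loss of self-adjointness is the main point of the proof. The paper's substitute is a parity argument for the non-self-adjoint family: the number $n_{\Omega}\left(\lambda\right)$ of eigenvalues of $\mathcal{A}^{\lambda}$ in the open left half-plane is, modulo $2$, locally constant in $\lambda$ unless an eigenvalue passes through the origin, because non-real eigenvalues cross the imaginary axis in conjugate pairs and the essential spectrum is kept uniformly to the right of the imaginary axis (Proposition \ref{prop-essential}, proved via Zhislin--Weyl sequences and commutator estimates adapted to the nonlocal symbol). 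This is precisely why the hypotheses involve only the \emph{parity} of $n^{-}\left(\mathcal{L}_{0}\right)$.

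Two further steps you treat as routine are not. First, $\mathcal{A}^{\lambda}\rightarrow\mathcal{L}_{0}$ as $\lambda\rightarrow0+$ only in the strong operator topology, so eigenvalues do not persist by standard perturbation theory; the paper must invoke the Vock--Hunziker asymptotic perturbation theory together with a priori $H^{m/2}$ bounds (Lemma \ref{lemma-apriori}), which exploit the decay of $u_{c}$ to regain compactness. Second, the moving-kernel formula is a second-order computation: the $O\left(\lambda\right)$ shift of the zero eigenvalue vanishes, and one must expand the perturbed eigenfunction as $u_{\lambda}=\bar{c}_{\lambda}u_{cx}+\lambda\bar{v}_{\lambda}$ with $\bar{v}_{\lambda}\rightarrow-\partial_{c}u_{c}$ before any pairing yields $\lim_{\lambda\rightarrow0+}k_{\lambda}/\lambda^{2}=-c^{-1}\left(dP/dc\right)/\left\Vert u_{cx}\right\Vert_{L^{2}}^{2}$. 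Your shortcut identity $P^{\prime}\left(c\right)=-\left(\mathcal{L}_{0}\partial_{c}u_{c},\partial_{c}u_{c}\right)$ happens to hold for KDV and holds up to the positive factor $c$ for BBM, but it is false for RBou: there $-\left(\mathcal{L}_{0}\partial_{c}u_{c},\partial_{c}u_{c}\right)=c^{-2}\left(P^{\prime}\left(c\right)-\left(\left(\mathcal{M}+1\right)u_{c},u_{c}\right)\right)$, which can have the opposite sign to $P^{\prime}\left(c\right)$. So the sign you would extract from that pairing does not uniformly control the perturbed eigenvalue; the cancellations producing $-c^{-2}P^{\prime}\left(c\right)$ in Lemma \ref{lemma-moving-RBou} come from the full second-order expansion, not from the quadratic form of $\mathcal{L}_{0}$ alone.
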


Note that the operators $\mathcal{L}_{0}$ are obtained from the linearization
of equations satisfied by solitary waves, and $P\left(  c\right)  =Q\left(
u_{c}\right)  $ where $Q\left(  u\right)  $ is the momentum invariant due to
the translation symmetry of the evolution equations (\ref{BBM})-(\ref{RBOU}%
)$.$For example, for KDV type equitation, $Q\left(  u\right)  =\frac{1}{2}\int
u^{2}\ dx$. The assumption (\ref{assum-kernel}) can be proved for
$\mathcal{M}=-\partial_{x}^{2}\ $and for some nonlocal dispersive operators
(\cite{albert92}, \cite{albert-et-91}). It has the implication that the
solitary wave branch $u_{c}\left(  x\right)  $ is unique. More discussions
about the spectrum assumptions for $\mathcal{L}_{0}\ $can be found in Section 5(a).

Let us relate our results to the literature on stability and instability of
solitary waves. The first rigorous proof of stability of solitary waves is
obtained by Benjamin (\cite{benjamin72}), for the original KDV equation.
Benjamin's idea is to show that stable solitary waves are local energy
minimizers under the constraint of constant momentum. This idea was already
anticipated by Boussinesq (\cite{B0u-1872}) and has been extended to get
stability results for more general settings (\cite{albert-et-87},
\cite{albert-ccompact}, \cite{gss87}, \cite{wein-87}). In particular, it is
shown in \cite{bss87}, \cite{ss89} that for KDV and BBM type equations, the
solitary waves are orbitally stable in the energy norm if and only if
$dP/dc>0$, under the hypothesis%

\begin{equation}
\ker\mathcal{L}_{0}=\left\{  u_{cx}\right\}  ,\ \text{and }n^{-}\left(
\mathcal{L}_{0}\right)  =1.\ \label{kernel-1}%
\end{equation}
For power like nonlinear terms and dispersive operators with symbols
$\alpha\left(  k\right)  =\left\vert k\right\vert ^{\mu}$, the function
$P\left(  c\right)  $ can be computed by scaling and thus the more explicit
stability criteria is obtained (see \cite{bss87}, \cite{ss89}). The stability
criterion $dP/dc>0$ in \cite{bss87}, \cite{ss89} is by a straight application
of the abstract theory of \cite{gss87}, and this is also proved in
\cite{wein-87}. The instability proof of \cite{gss87} can not apply directly
to KDV and BBM cases. In \cite{bss87}, \cite{ss89}, the proof of \cite{gss87}
is modified to yield the instability criterion $dP/dc<0,\ $by estimating the
sublinear growth of the anti-derivative of the solution. A less technical way
of modification (introduced in \cite{lin-bubble}) is described in Appendix for
general settings. Applying Theorem \ref{THM: main} to the KDV and BBM cases
with $n^{-}\left(  \mathcal{L}_{0}\right)  =1$, we recover the instability
criterion $dP/dc<0\ $in \cite{bss87}, \cite{ss89}, and furthermore it helps to
clarify the mechanism of this instability by finding a non-oscillatory and
exponentially growing solution to the linearized problem. We note that the
nonlinear instability proved in \cite{bss87}, \cite{ss89} is in the energy
norm $H^{m/2}$ and there is no estimate of the time scale for the growth of
instability. The linear instability result might be the first step toward
proving a stronger nonlinear instability result in $L^{2}$ norm with the
exponential growth.

When $\mathcal{M}=-\partial_{x}^{2}$, Pego and Weinstein \cite{pw92-evans}
study the spectral problem for solitary waves of BBM, KDV and RBou equations
by the Evans function technique (\cite{agj-evans}, \cite{evans2}), and a
purely growing mode is shown to exist if $dP/dc<0$. Since for $\mathcal{M}%
=-\partial_{x}^{2}$, we have $\ker\mathcal{L}_{0}=\left\{  u_{cx}\right\}  $
and $n^{-}\left(  \mathcal{L}_{0}\right)  =1$ (see Section 5(a)), the result
of \cite{pw92-evans} is a special case of Theorem \ref{THM: main}. The novelty
of our result is to allow general dispersive operators $\mathcal{M}$,
particularly the nonlocal operators, for which the spectral problem can not be
studied via the Evans functions. Comparison with the Evans functions are
discussed more in Section 5(c). Moreover, our instability criteria for cases
when $n^{-}\left(  \mathcal{L}_{0}\right)  \geq2$ appear to be new, even for
the relatively well-studied BBM and KDV type equations. The situation
$n^{-}\left(  \mathcal{L}_{0}\right)  \geq2$ might arise for highly
oscillatory solitary waves (i.e. \cite{amt2}, \cite{abr-benjamin}). Even for
single-humped and positive solitary waves, it is not necessarily true that
$n^{-}\left(  \mathcal{L}_{0}\right)  =1$ since there is no Sturm theory for
general operators $\mathcal{M}$. One such example is the large solitary waves
for the full water wave problem. In \cite{lin-soli-water}, a similar
instability criterion is derived for solitary waves, in terms of an operator
$\mathcal{L}_{0}$ with $\alpha\left(  k\right)  =k\coth\left(  kH\right)  $,
for which $n^{-}\left(  \mathcal{L}_{0}\right)  $ grows without bound as the
solitary wave approaches the highest wave.

Let us discuss some implications of our results for solitary wave stability.
The solitary waves of regularized Boussinesq equations are known
(\cite{smereka}, \cite{pw92-evans}) to be highly indefinite (constrainted)
energy saddles, and therefore their stability can not be pursued by showing
energy minimizers as in the BBM and KDV cases. More interestingly, solitary
waves of the full water wave are also indefinite (constrainted) energy saddles
(\cite{bona-sachs}, \cite{hur-lin}) and thus the study of stability of RBou
solitary waves might shed some light on the full water wave problem. We note
that energy saddles are not necessarily unstable. Indeed, it is shown in
\cite{pw-97} that small solitary waves of the regularized Boussinesq equation
are spectrally stable, that is, there are no growing modes to the linearized
equation. So far, we do not know any method to prove nonlinear stability for
energy saddle type solutions. The spectral stability is naturally the first
step. The next theorem might be useful in the study of the spectral stability,
in particular, for large solitary waves of RBou type equations.

\begin{theorem}
\label{thm-transition}Consider solitary waves $u_{c}\left(  x-ct\right)  \ $of
equations (\ref{BBM})-(\ref{RBOU}), and assume $\ker\mathcal{L}_{0}=\left\{
u_{cx}\right\}  $. Suppose all possible growing modes are purely growing and
the spectral stability exchanges at $c_{0}$, then $P^{\prime}\left(
c_{0}\right)  =0$.
\end{theorem}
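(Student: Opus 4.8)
The plan is to exploit the structure already established in Theorem~\ref{THM: main}. That theorem says a purely growing mode exists precisely when the parity of $n^{-}(\mathcal{L}_{0})$ and the sign of $P'(c)$ are ``mismatched'' in the sense of conditions (i)--(ii): instability is guaranteed when $n^{-}(\mathcal{L}_{0})$ is even and $P'(c)>0$, or when $n^{-}(\mathcal{L}_{0})$ is odd and $P'(c)<0$. The key observation is that since we assume all growing modes are purely growing (so that $\lambda$ is real and the relevant instability is detected by the real-axis counting argument), the existence or nonexistence of a growing mode is governed entirely by a parity/sign quantity built from $n^{-}(\mathcal{L}_{0})$ and $\operatorname{sign} P'(c)$. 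I would first make this precise by identifying the integer-valued index
\begin{equation}
\iota(c) = n^{-}(\mathcal{L}_{0}) + \theta\big(P'(c)\big),
\end{equation}
where $\theta$ records whether $P'(c)$ is positive or negative, so that the instability criteria (i)--(ii) amount to saying: a purely growing mode exists iff $\iota(c)$ is odd (equivalently, a suitable determinant/reduced-operator quantity changes sign).

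Next I would set up the continuation argument in $c$. The central fact, inherited from the proof machinery behind Theorem~\ref{THM: main} (the reduction to a family of nonlocal operators together with the moving-kernel formula), is that the number of growing modes is constant on any interval of $c$ on which no eigenvalue crosses the imaginary axis, i.e. on any interval of constant spectral stability type. ``Spectral stability exchanges at $c_0$'' means that on one side of $c_0$ there are no growing modes and on the other side there is at least one purely growing mode. In terms of the index above, this forces $\iota(c)$ to have different parities on the two sides of $c_0$. Since $\ker\mathcal{L}_{0}=\{u_{cx}\}$ is assumed for all $c$ near $c_0$ and $n^{-}(\mathcal{L}_{0})$ is an integer that can only change when an eigenvalue of $\mathcal{L}_{0}$ crosses zero, I would argue that $n^{-}(\mathcal{L}_{0})$ cannot change parity across $c_0$ without the kernel of $\mathcal{L}_{0}$ becoming strictly larger than $\{u_{cx}\}$ at $c_0$ — which the hypothesis rules out. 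Hence the parity change in $\iota(c)$ must come entirely from the sign term $\theta(P'(c))$, i.e. $P'(c)$ changes sign across $c_0$. By continuity of $c\mapsto P'(c)$ (momentum is smooth along the solitary-wave branch), a sign change forces $P'(c_0)=0$.

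To carry this out cleanly, the argument reduces to two monotonicity/invariance statements that I would isolate. First, that $n^{-}(\mathcal{L}_{0})$ is locally constant in $c$ as long as $0$ is not an eigenvalue of $\mathcal{L}_{0}$ beyond the trivial kernel $\{u_{cx}\}$; this is a standard eigenvalue-perturbation fact, since eigenvalues of the self-adjoint operator $\mathcal{L}_{0}$ depend continuously on $c$ and $u_{cx}\in\ker\mathcal{L}_{0}$ is the \emph{fixed} translational zero-mode that does not contribute to $n^{-}$. Second, that the count of purely growing modes equals the parity-type index $\iota(c)$ modulo the exchange mechanism of Theorem~\ref{THM: main}; this is exactly the content of conditions (i)--(ii) read as a sharp (not merely sufficient) criterion under the hypothesis that all growing modes are purely growing.

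The main obstacle I anticipate is precisely this second point: Theorem~\ref{THM: main} is stated as a one-directional sufficient condition for instability, whereas the transition theorem needs the criterion to be \emph{sharp} at the exchange point so that a change in stability type is registered by a sign change in $P'$ rather than merely a change in $n^{-}(\mathcal{L}_{0})$. I would handle this by appealing to the reduced nonlocal-operator family from the proof of Theorem~\ref{THM: main}: spectral stability exchange at $c_0$ means a real eigenvalue $\lambda$ collides with $0$ there, and the moving-kernel formula relates the derivative of this collision to $P'(c)$. Thus at $c_0$ the real eigenvalue passes through the origin exactly when the reduced operator's relevant determinant vanishes, and that vanishing is tied to $P'(c_0)=0$ provided $n^{-}(\mathcal{L}_{0})$ stays constant — which the kernel hypothesis guarantees. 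In short, the cleanest route is to show that the only admissible mechanism for an eigenvalue to cross the imaginary axis at $c_0$, under $\ker\mathcal{L}_{0}=\{u_{cx}\}$, is through the momentum functional, and the moving-kernel computation pins the crossing to $P'(c_0)=0$.
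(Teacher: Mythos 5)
Your parity-index argument cannot close the proof, because Theorem \ref{THM: main} is only a \emph{sufficient} condition for instability. On the stable side of $c_{0}$ its contrapositive does constrain the sign of $P^{\prime}(c)$ (given that $n^{-}(\mathcal{L}_{0})$ is locally constant under the kernel hypothesis), but on the unstable side the theorem gives no information whatsoever about the sign of $P^{\prime}(c)$: a purely growing mode may perfectly well exist when your index $\iota(c)$ is even, since the continuation argument in the paper only \emph{forces} a crossing when the eigenvalue count $n_{\Omega}(\lambda)$ is odd for small $\lambda$. So the claim that the exchange of stability ``forces $\iota(c)$ to have different parities on the two sides of $c_{0}$'' is unjustified, and with it the conclusion that $P^{\prime}$ changes sign. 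You correctly flag this as the main obstacle, but the proposed repair remains a gesture rather than an argument.

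The paper's actual proof discards the parity considerations entirely (indeed $n^{-}(\mathcal{L}_{0})$ plays no role in it). The exchange of stability at $c_{0}$, together with the assumption that all growing modes are purely growing, produces a sequence $c_{n}\rightarrow c_{0}$ of unstable waves with growth rates $\lambda_{n}>0$, and one must first argue that $\lambda_{n}\rightarrow 0+$ (the rates cannot accumulate at a positive value, else by perturbation in $c$ the growing mode would persist onto the stable side). The heart of the matter is then a \emph{two-parameter} version of the moving kernel formula: the computation of Lemma \ref{lemma-moving-RBou} is re-derived along the simultaneous limit $(\lambda_{n},c_{n})\rightarrow(0+,c_{0})$, where now the relevant eigenvalue is pinned at zero because $\mathcal{A}^{\lambda_{n},c_{n}}u_{n}=0$, so the identity $0=\bar{c}_{n}I_{1}+I_{2}$ passes to the limit and yields $0=-c_{0}^{-2}\,P^{\prime}(c_{0})$ up to a positive factor. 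The hypothesis $\ker\mathcal{L}_{0}=\{u_{cx}\}$ enters precisely here, to identify the limits of $u_{n}$ and of the correction $\bar{v}_{n}$ as $u_{c_{0}x}$ and $-\partial_{c}u_{c}|_{c_{0}}$, not through any statement about $n^{-}(\mathcal{L}_{0})$. Your final paragraph points in this direction, but without the reduction to a vanishing sequence of growth rates and without the $c$-dependent version of the moving kernel computation, the proof is not there.
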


For the original regularized Boussinesq equation, it is shown in \cite[p.
79]{pw92-evans} that $P^{\prime}\left(  c\right)  >0$ for any $c^{2}>1$. By
Theorem \ref{thm-transition} and the spectral stability of small solitary
waves \cite{pw-97}, it follows that either all solitary waves are spectrally
stable or there is oscillatory instability for some solitary waves. So the
spectral stability of large solitary waves would follow if one could exclude
the oscillatory instability, namely, show that any growing mode must be purely
growing. For BBM and KDV type equations, when $n^{-}\left(  \mathcal{L}%
_{0}\right)  \geq2$, the solitary waves are also of energy saddle type and
their stability could not studied by the usual energy argument. Above remarks
also apply to these cases. We note that for KDV and BBM equations, under the
hypothesis (\ref{kernel-1}) the oscillatory instability can be excluded as in
the case $\mathcal{M}=-\partial_{x}^{2}$\ (\cite[p. 79]{pw92-evans}), by
adapting the finite-dimensional argument of \cite{mackay}.

We briefly discuss the proof of Theorem \ref{THM: main}. The growing modes
equations (\ref{spectral-BBM}), (\ref{spectral-Rbou}) and (\ref{spectrum-KDV})
are non-self-adjoint eigenvalue problems for variable coefficient operators
and rather few systematic techniques are available to study such problems. Our
key step is to reformulate the spectral problems in terms of a family of
operators $\mathcal{A}^{\lambda}$, which has the form of $\mathcal{M}$ plus
some nonlocal but bounded terms. The idea is to try to relate the eigenvalue
problems to the elliptic type problems for solitary waves. The existence of a
purely growing mode is equivalent to find some $\lambda>0$ such that
$\mathcal{A}^{\lambda}$ has a nontrivial kernel. This is achieved by a
continuation strategy to exploit the difference of the spectra of
$\mathcal{A}^{\lambda}$ near infinity and zero. First, we show that the
essential spectrum of $\mathcal{A}^{\lambda}$ lies to the right and away from
the imaginary axis. For large $\lambda$, the spectra of the operator
$\mathcal{A}^{\lambda}$ is shown to lie entirely in the right half complex
plane. So if for small $\lambda$, the operator $\mathcal{A}^{\lambda}$ has an
odd number of eigenvalues in the left half plane, then the spectrum of
$\mathcal{A}^{\lambda}$ must get across the origin at some $\lambda>0$ where a
purely growing mode appears. The zero-limit operator $\mathcal{A}^{0}$ is
exactly the operator $\mathcal{L}_{0}$. Since the convergence of
$\mathcal{A}^{\lambda}$ to $\mathcal{L}_{0}$ is rather weak, the usual
perturbation theory does not apply and the asymptotic perturbation theory by
Vock and Hunziker (\cite{vock-hunz82}) is used to study perturbations of the
eigenvalues of $\mathcal{L}_{0}$. In particular, it is important to know how
the zero eigenvalue of $\mathcal{L}_{0}$ is perturbed, for which we derive a
moving kernel formula. The instability criteria and Theorem
\ref{thm-transition} about the transition points follows from this formula.
One important technical issue in the proof is to use the decay of solitary
waves to obtain a priori estimates and gain certain compactness.

The approach of using nonlocal dispersion operators $\mathcal{A}^{\lambda}$
with continuation to find instability criteria originates from our previous
works (\cite{lin-sima}, \cite{lin-cmp}, \cite{lin-bgk}) on 2D ideal fluid and
1D electrostatic plasma, which have also been extended to study instability of
galaxies \cite{guo-lin} and 3D electromagnetic plasmas \cite{lin-strauss1},
\cite{lin-strauss2}. The consideration of the movement of $\ker\mathcal{A}%
^{0}\ $is suggested in \cite[Remark 3.2]{lin-cmp}. The techniques developed in
this paper have been extended to get stability criteria for periodic
dispersive waves (\cite{lin-periodic}), and to prove instability of large
solitary and periodic waves for the full water wave problem
(\cite{lin-soli-water}, \cite{lin-stokes}). This general approach might also
be useful for to study instability in dispersive wave systems and
multi-dimensional problems, which have been poorly understood.

This paper is organized as follows. In Section 2, we give details of the proof
of Theorem \ref{THM: main} for the BBM case. Section 3 treats the RBou case,
whose proof is rather similar to the BBM case. The KDV case has some subtle
difference to the previous cases and is discussed in Section 4. In Section 5,
we discuss some extensions and open issues. The Appendix gives an alternative
way of modifying the nonlinear instability proof in \cite{gss87} to general
dispersive long wave models.

\section{The BBM type equations}

Consider a traveling solution $u\left(  x,t\right)  =u_{c}\left(  x-ct\right)
\ \left(  c>1\right)  $ of the BBM type equation (\ref{BBM}). Then $u_{c}$
satisfies the equation%
\begin{equation}
\mathcal{M}u_{c}+\left(  1-\frac{1}{c}\right)  u_{c}-\frac{1}{c}f\left(
u_{c}\right)  =0\text{.} \label{solitary-bbm}%
\end{equation}
We define the following operator $\mathcal{L}_{0}:H^{m}\rightarrow L^{2}\ $by
the linearization of (\ref{solitary-bbm})%

\begin{equation}
\mathcal{L}_{0}=\mathcal{M}+\left(  1-\frac{1}{c}\right)  -\frac{1}%
{c}f^{\prime}\left(  u_{c}\right)  . \label{L0-BBM}%
\end{equation}
The linearized equation in the traveling frame $\left(  x-ct,t\right)  $ is
\begin{equation}
\left(  \partial_{t}-c\partial_{x}\right)  \left(  u+\mathcal{M}u\right)
+\partial_{x}\left(  u+f^{\prime}\left(  u_{c}\right)  u\right)  =0.
\label{L-bbm}%
\end{equation}
For a growing mode solution $e^{\lambda t}u\left(  x\right)  $ $\left(
\operatorname{Re}\lambda>0\right)  $ of (\ref{L-bbm}), $u\left(  x\right)  $
satisfies
\begin{equation}
\left(  \lambda-c\partial_{x}\right)  \left(  u+\mathcal{M}u\right)
+\partial_{x}\left(  u+f^{\prime}\left(  u_{c}\right)  u\right)  =0,
\label{spectral-BBM}%
\end{equation}
which can be written as
\[
\mathcal{M}u+u+\frac{\partial_{x}}{\lambda-c\partial_{x}}\left(  u+f^{\prime
}\left(  u_{c}\right)  u\right)  =0.
\]
This motivates us to define a family of operators $\mathcal{A}^{\lambda}%
:H^{m}\rightarrow L^{2}$ by
\[
\mathcal{A}^{\lambda}u=\mathcal{M}u+u+\frac{\partial_{x}}{\lambda
-c\partial_{x}}\left(  u+f^{\prime}\left(  u_{c}\right)  u\right)  .
\]
Thus the existence of a growing mode is reduced to find $\lambda\in\mathbb{C}$
with $\operatorname{Re}\lambda>0$ such that the operator $\mathcal{A}%
^{\lambda}$ has a nontrivial kernel. Below, we seek a purely growing mode with
$\lambda>0$. We use a continuation strategy, by exploiting the difference of
the spectra of the operators $\mathcal{A}^{\lambda}$ for $\lambda$ near
infinity and zero. We divide the proof into several steps.

\subsection{The properties of $\mathcal{A}^{\lambda}$}

Define the following operators
\[
\mathcal{D}=c\partial_{x},\ \mathcal{E}^{\lambda,\pm}=\frac{\mathcal{\lambda}%
}{\lambda\pm\mathcal{D}}.
\]
Then the operator $\mathcal{A}^{\lambda}\left(  \lambda>0\right)  $ can be
written as
\[
\mathcal{A}^{\lambda}=\mathcal{M}+1-\frac{1}{c}\left(  1-\mathcal{E}%
^{\lambda,-}\right)  \left(  1+f^{\prime}\left(  u_{c}\right)  \right)  .
\]

\begin{lemma}
\label{lemma-e-lb}

\textrm{{(a)} }For $\lambda>0,\ $\textrm{the operators }$\mathcal{E}%
^{\lambda,\pm}$\textrm{ are continuous in $\lambda$ and
\begin{equation}
\left\Vert \mathcal{E}^{\lambda,\pm}\right\Vert _{L^{2}\rightarrow L^{2}}%
\leq1,\ \label{estimate-E-lb}%
\end{equation}
}%
\begin{equation}
\left\Vert 1-\mathcal{E}^{\lambda,\pm}\right\Vert _{L^{2}\rightarrow L^{2}%
}\leq1. \label{estimate-E-lb-one}%
\end{equation}

\textrm{{(b)} When }$\lambda\rightarrow0+$, $\mathcal{E}^{\lambda,\pm}$
converges to $0$ strongly in $L^{2}\left(  \mathbf{R}\right)  $.

(c) When $\lambda\rightarrow+\infty,$ $\mathcal{E}^{\lambda,\pm}$ converges to
$1$ strongly in $L^{2}\left(  \mathbf{R}\right)  $.
\end{lemma}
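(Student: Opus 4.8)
The plan is to treat every operator here as a Fourier multiplier and reduce all three claims to elementary pointwise estimates on symbols together with Plancherel's theorem. Since $\mathcal{D}=c\partial_{x}$ acts as multiplication by $ick$ on the Fourier side, the operator $\lambda\pm\mathcal{D}$ has symbol $\lambda\pm ick$, whose real part is $\lambda>0$, so it never vanishes; hence $\mathcal{E}^{\lambda,\pm}$ is the bounded Fourier multiplier with symbol
\[
\sigma_{\pm}(k)=\frac{\lambda}{\lambda\pm ick}.
\]
Because the $L^{2}\rightarrow L^{2}$ norm of a multiplier equals the $L^{\infty}$ norm of its symbol, part (a) reduces to two pointwise inequalities. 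For \eqref{estimate-E-lb} I would compute $\left|\sigma_{\pm}(k)\right|=\lambda/\sqrt{\lambda^{2}+c^{2}k^{2}}\leq1$, and for \eqref{estimate-E-lb-one} I note that $1-\mathcal{E}^{\lambda,\pm}$ has symbol $\pm ick/(\lambda\pm ick)$, of modulus $\left|ck\right|/\sqrt{\lambda^{2}+c^{2}k^{2}}\leq1$.

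For the continuity in $\lambda$ I would subtract two symbols and simplify: writing $\mu=\pm ick$,
\[
\frac{\lambda_{1}}{\lambda_{1}+\mu}-\frac{\lambda_{2}}{\lambda_{2}+\mu}=\frac{(\lambda_{1}-\lambda_{2})\,\mu}{(\lambda_{1}+\mu)(\lambda_{2}+\mu)}.
\]
Taking moduli and using $(\lambda_{1}^{2}+t^{2})(\lambda_{2}^{2}+t^{2})\geq(\lambda_{1}\lambda_{2}+t^{2})^{2}$ with $t=\left|ck\right|$, together with $\sup_{t\geq0} t/(\lambda_{1}\lambda_{2}+t^{2})=1/(2\sqrt{\lambda_{1}\lambda_{2}})$, yields the uniform bound $\|\mathcal{E}^{\lambda_{1},\pm}-\mathcal{E}^{\lambda_{2},\pm}\|\leq\left|\lambda_{1}-\lambda_{2}\right|/(2\sqrt{\lambda_{1}\lambda_{2}})$. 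Thus $\lambda\mapsto\mathcal{E}^{\lambda,\pm}$ is locally Lipschitz, hence norm-continuous, on $(0,\infty)$.

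Parts (b) and (c) I would settle by dominated convergence on the Fourier side. For every fixed $k\neq0$ one has $\sigma_{\pm}(k)\rightarrow0$ as $\lambda\rightarrow0+$ and $\sigma_{\pm}(k)\rightarrow1$ as $\lambda\rightarrow+\infty$; in both regimes the relevant symbol is dominated by $1$ uniformly in $\lambda$, so $\int\left|\sigma_{\pm}(k)\right|^{2}\left|\hat{g}(k)\right|^{2}\,dk\rightarrow0$ (respectively $\int\left|1-\sigma_{\pm}(k)\right|^{2}\left|\hat{g}(k)\right|^{2}\,dk\rightarrow0$) for every $g\in L^{2}(\mathbf{R})$, which is exactly strong convergence of $\mathcal{E}^{\lambda,\pm}$ to $0$ (respectively to $1$).

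I do not expect a serious obstacle, since after the Fourier transform the problem is genuinely scalar. The one conceptual point to flag is that the convergences in (b) and (c) \emph{cannot} be upgraded to operator-norm convergence: the suprema $\sup_{k}\left|\sigma_{\pm}(k)\right|$ and $\sup_{k}\left|1-\sigma_{\pm}(k)\right|$ both equal $1$ for every $\lambda>0$ (attained near $k=0$ and as $k\rightarrow\infty$ respectively), so the norms stay pinned at $1$. This is precisely why the statement asserts only strong convergence, and it is the reason why, later in the continuation argument, the asymptotic perturbation theory of Vock--Hunziker must be used in place of ordinary norm-resolvent perturbation theory when passing to the limit operators $\mathcal{A}^{0}=\mathcal{L}_{0}$ and $\mathcal{A}^{\lambda}$ for large $\lambda$.
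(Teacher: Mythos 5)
Your proposal is correct and follows essentially the same route as the paper: both reduce everything to the explicit Fourier symbol $\lambda/(\lambda\pm ick)$, obtain (a) from the pointwise bound $\left\vert \lambda/(\lambda\pm ick)\right\vert \leq 1$ via Plancherel, and obtain (b) and (c) by dominated convergence. Your added Lipschitz estimate for continuity in $\lambda$ and the remark that the convergence is only strong (not in operator norm) are correct refinements of details the paper leaves implicit.
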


\begin{proof}
We have
\[
\left\Vert \mathcal{E}^{\lambda,\pm}\phi\right\Vert _{L^{2}}^{2}%
=\int_{\mathbb{R}}\left\vert \frac{\lambda}{\lambda\pm ick}\right\vert
^{2}\left\vert \hat{\phi}\left(  k\right)  \right\vert ^{2}dk\leq
\int_{\mathbb{R}}\left\vert \hat{\phi}\left(  k\right)  \right\vert
^{2}dk=\left\Vert \phi\right\Vert _{L^{2}}^{2}%
\]
and (\ref{estimate-E-lb}) follows. Similarly, we get the estimate
(\ref{estimate-E-lb-one}). By the dominant convergence theorem,
\[
\left\Vert \mathcal{E}^{\lambda,\pm}\phi\right\Vert _{L^{2}}^{2}%
=\int_{\mathbb{R}}\left\vert \frac{\lambda}{\lambda\pm ick}\right\vert
^{2}\left\vert \hat{\phi}\left(  k\right)  \right\vert ^{2}dk\rightarrow0,
\]
when $\lambda\rightarrow0+$. Thus $\mathcal{E}^{\lambda,\pm}\rightarrow0$
strongly in $L^{2}$. The proof of (c) is similar to that of (b) and we skip it.
\end{proof}

\begin{corollary}
\label{Cor-A-lb-convergen}For $\lambda>0$, the operator $\mathcal{A}^{\lambda
}$ converges to $\mathcal{L}_{0}$ strongly in $L^{2}$ when $\lambda
\rightarrow0+$, and converges to $\mathcal{M}+1$ strongly in $L^{2}$ when
$\lambda\rightarrow+\infty$.
\end{corollary}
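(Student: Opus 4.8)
The plan is to reduce each of the two limits to the strong convergence of $\mathcal{E}^{\lambda,-}$ established in Lemma \ref{lemma-e-lb}, by observing that the unbounded part $\mathcal{M}$ cancels exactly in the relevant operator differences. Subtracting the expression (\ref{L0-BBM}) for $\mathcal{L}_{0}$ from the factored form of $\mathcal{A}^{\lambda}$ gives, after collecting the constant and multiplication terms,
\[
\mathcal{A}^{\lambda}-\mathcal{L}_{0}=\frac{1}{c}\,\mathcal{E}^{\lambda,-}\bigl(1+f^{\prime}\left(u_{c}\right)\bigr),
\]
where the multiplication by $1+f^{\prime}\left(u_{c}\right)$ is understood as the inner operator, while
\[
\mathcal{A}^{\lambda}-\left(\mathcal{M}+1\right)=-\frac{1}{c}\bigl(1-\mathcal{E}^{\lambda,-}\bigr)\bigl(1+f^{\prime}\left(u_{c}\right)\bigr).
\]
Both right-hand sides are \emph{bounded} operators on $L^{2}$: the $\mathcal{M}$ terms have disappeared, so there is no difficulty arising from the unboundedness of $\mathcal{M}$, and the asserted strong convergence is verified simply by evaluating on a fixed element $u$.

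First I would record that multiplication by $1+f^{\prime}\left(u_{c}\right)$ maps $L^{2}$ boundedly into itself. Since $f$ is $C^{1}$ with $f^{\prime}\left(0\right)=0$ and $u_{c}\left(x\right)\rightarrow0$ as $\left\vert x\right\vert\rightarrow\infty$, the function $f^{\prime}\left(u_{c}\right)$ is continuous and tends to $0$ at infinity, hence is bounded; consequently $\bigl(1+f^{\prime}\left(u_{c}\right)\bigr)u\in L^{2}$ for every $u\in L^{2}$. This is the one point that deserves a word of care, so I would isolate it at the outset.

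Next, fix $u\in H^{m}$ and set $w=\bigl(1+f^{\prime}\left(u_{c}\right)\bigr)u\in L^{2}$. For the limit at $\lambda\rightarrow0+$ I would estimate
\[
\bigl\Vert\left(\mathcal{A}^{\lambda}-\mathcal{L}_{0}\right)u\bigr\Vert_{L^{2}}=\frac{1}{c}\bigl\Vert\mathcal{E}^{\lambda,-}w\bigr\Vert_{L^{2}},
\]
and apply Lemma \ref{lemma-e-lb}(b) to the fixed function $w$ to conclude that the right-hand side tends to $0$; this is exactly strong convergence $\mathcal{A}^{\lambda}\rightarrow\mathcal{L}_{0}$. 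For the limit at $\lambda\rightarrow+\infty$ I would use the companion identity together with Lemma \ref{lemma-e-lb}(c), which gives $1-\mathcal{E}^{\lambda,-}\rightarrow0$ strongly, so that
\[
\bigl\Vert\left(\mathcal{A}^{\lambda}-\left(\mathcal{M}+1\right)\right)u\bigr\Vert_{L^{2}}=\frac{1}{c}\bigl\Vert\left(1-\mathcal{E}^{\lambda,-}\right)w\bigr\Vert_{L^{2}}\rightarrow0.
\]

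The argument is essentially mechanical once Lemma \ref{lemma-e-lb} is in hand, and I do not expect a genuine obstacle: the cancellation of $\mathcal{M}$ reduces both statements to the behavior of $\mathcal{E}^{\lambda,-}$ on the single fixed $L^{2}$ vector $w$. The uniform bounds (\ref{estimate-E-lb})--(\ref{estimate-E-lb-one}) are not actually needed for this strong-convergence claim, though they do confirm that the two differences above are bona fide bounded operators rather than merely densely defined ones.
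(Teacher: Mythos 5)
Your proposal is correct and coincides with the paper's (implicit) argument: the corollary is stated without proof precisely because, in the factored form $\mathcal{A}^{\lambda}=\mathcal{M}+1-\frac{1}{c}\left(1-\mathcal{E}^{\lambda,-}\right)\left(1+f^{\prime}\left(u_{c}\right)\right)$, the differences $\mathcal{A}^{\lambda}-\mathcal{L}_{0}$ and $\mathcal{A}^{\lambda}-\left(\mathcal{M}+1\right)$ are exactly the bounded operators you wrote down, and Lemma \ref{lemma-e-lb}(b),(c) applied to the fixed vector $w=\left(1+f^{\prime}\left(u_{c}\right)\right)u$ gives both limits. Your added remark that $f^{\prime}\left(u_{c}\right)$ is bounded is the only point needing verification and you handle it correctly.
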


The following theorem states that the essential spectrum of $\mathcal{A}%
^{\lambda}$ is to the right and away from the imaginary axis.

\begin{proposition}
\label{prop-essential}For any $\lambda>0$, we have
\begin{equation}
\sigma_{\text{ess}}\left(  \mathcal{A}^{\lambda}\right)  \subset\left\{
z\ |\ \operatorname{Re}\lambda\geq\frac{1}{2}\left(  1-\frac{1}{c}\right)
>0\right\}  . \label{bound-essential}%
\end{equation}

\end{proposition}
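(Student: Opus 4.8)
The plan is to isolate the part of $\mathcal{A}^{\lambda}$ that carries the essential spectrum, namely its constant-coefficient (Fourier multiplier) piece, and to treat the remaining variable-coefficient piece as a relatively compact perturbation. Expanding the product in the definition of $\mathcal{A}^{\lambda}$,
\[
\mathcal{A}^{\lambda}=\underbrace{\mathcal{M}+1-\tfrac{1}{c}\left(1-\mathcal{E}^{\lambda,-}\right)}_{=:\,\mathcal{A}_{0}^{\lambda}}\;-\;\underbrace{\tfrac{1}{c}\left(1-\mathcal{E}^{\lambda,-}\right)f^{\prime}\left(u_{c}\right)}_{=:\,K}.
\]
Since $u_{c}\left(x\right)\rightarrow0$ as $\left\vert x\right\vert\rightarrow\infty$ and $f^{\prime}\left(0\right)=0$ with $f\in C^{1}$, the multiplication operator $f^{\prime}\left(u_{c}\right)$ is bounded and its coefficient vanishes at spatial infinity; this is the source of compactness. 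The operator $\mathcal{A}_{0}^{\lambda}$ is a Fourier multiplier, so its spectrum is the closure of the range of its symbol and coincides with its essential spectrum; I would compute this range first, and then show $K$ does not move the essential spectrum.

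For the symbol, recall $\mathcal{D}=c\partial_{x}$ has symbol $ick$, so $\mathcal{E}^{\lambda,-}$ has symbol $\lambda/\left(\lambda-ick\right)$ and $1-\mathcal{E}^{\lambda,-}$ has symbol $-ick/\left(\lambda-ick\right)$. Hence the symbol of $\mathcal{A}_{0}^{\lambda}$ is
\[
\sigma_{0}\left(k\right)=\alpha\left(k\right)+1-\frac{1}{c}\cdot\frac{-ick}{\lambda-ick}=\alpha\left(k\right)+1-\frac{ck^{2}-ik\lambda}{\lambda^{2}+c^{2}k^{2}},
\]
whose real part is $\operatorname{Re}\sigma_{0}\left(k\right)=\alpha\left(k\right)+1-ck^{2}/\left(\lambda^{2}+c^{2}k^{2}\right)$. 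Using $\alpha\left(k\right)\geq0$ and $ck^{2}/\left(\lambda^{2}+c^{2}k^{2}\right)\leq1/c$, I obtain $\operatorname{Re}\sigma_{0}\left(k\right)\geq1-1/c$ for every $k\in\mathbf{R}$ and every $\lambda>0$. Thus $\sigma_{\text{ess}}\left(\mathcal{A}_{0}^{\lambda}\right)=\overline{\{\sigma_{0}\left(k\right):k\in\mathbf{R}\}}$ lies in $\{\operatorname{Re}z\geq1-1/c\}$, which is contained in the half-plane claimed in (\ref{bound-essential}) (the extra factor $\tfrac12$ leaves a safe margin).

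It then remains to prove $\sigma_{\text{ess}}\left(\mathcal{A}^{\lambda}\right)=\sigma_{\text{ess}}\left(\mathcal{A}_{0}^{\lambda}\right)$ via Weyl's theorem, i.e. that $K$ is $\mathcal{A}_{0}^{\lambda}$-compact. Fix $z$ in the resolvent set of $\mathcal{A}_{0}^{\lambda}$; the resolvent $\left(\mathcal{A}_{0}^{\lambda}-z\right)^{-1}$ is the Fourier multiplier with symbol $\left(\sigma_{0}\left(k\right)-z\right)^{-1}$, which tends to $0$ as $\left\vert k\right\vert\rightarrow\infty$ because $\alpha\left(k\right)\geq a\left\vert k\right\vert^{m}\rightarrow\infty$ ($m\geq1$). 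I would then invoke the standard fact that $g\left(x\right)h\left(D\right)$ is a compact operator on $L^{2}\left(\mathbf{R}\right)$ whenever $g$ vanishes at spatial infinity and the multiplier symbol $h$ vanishes at frequency infinity; applied with $g=f^{\prime}\left(u_{c}\right)$ and $h=\left(\sigma_{0}-z\right)^{-1}$ this shows $f^{\prime}\left(u_{c}\right)\left(\mathcal{A}_{0}^{\lambda}-z\right)^{-1}$ is compact. Since $1-\mathcal{E}^{\lambda,-}$ is bounded on $L^{2}$ by (\ref{estimate-E-lb-one}), $K\left(\mathcal{A}_{0}^{\lambda}-z\right)^{-1}$ is compact as well, so $K$ is relatively compact and the essential spectrum (defined through the Fredholm property, which is stable under relatively compact perturbations of a closed operator) is unchanged. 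The main obstacle is this last step: one must verify the compactness of $f^{\prime}\left(u_{c}\right)\left(\mathcal{A}_{0}^{\lambda}-z\right)^{-1}$ carefully, using the decay of $u_{c}$ together with the smoothing $\left\vert k\right\vert^{-m}$ gained from $\mathcal{M}$ (e.g. by approximating $f^{\prime}\left(u_{c}\right)$ by compactly supported coefficients and applying Rellich's compact embedding $H^{m}\hookrightarrow L^{2}$ on bounded sets), and to confirm that the relevant non-self-adjoint notion of essential spectrum is indeed invariant under such perturbations.
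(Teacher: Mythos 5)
Your argument is correct, but it takes a genuinely different route from the paper. You freeze the coefficients at spatial infinity, writing $\mathcal{A}^{\lambda}=\mathcal{A}_{0}^{\lambda}-K$ with $\mathcal{A}_{0}^{\lambda}$ a Fourier multiplier and $K=\frac{1}{c}\left(1-\mathcal{E}^{\lambda,-}\right)f^{\prime}\left(u_{c}\right)$, check that the symbol satisfies $\operatorname{Re}\sigma_{0}\left(k\right)\geq 1-\frac{1}{c}$ (your algebra here is right, since $ck^{2}/\left(\lambda^{2}+c^{2}k^{2}\right)\leq 1/c$ and $\alpha\geq0$), and then invoke stability of the essential spectrum under the relatively compact perturbation $K$, whose compactness relative to $\mathcal{A}_{0}^{\lambda}$ reduces to the standard fact that $g\left(x\right)h\left(D\right)$ is compact on $L^{2}$ when $g$ vanishes at spatial infinity and $h$ at frequency infinity. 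The paper instead characterizes the essential spectrum through Zhislin and Weyl singular sequences (Theorems 10.10 and 10.12 of the Hislop--Sigal reference): the geometric bound comes from the quadratic-form estimate of Lemma \ref{lemma-quadrtic-bound-lb} on functions supported near infinity, and the identification $W\left(\mathcal{A}^{\lambda}\right)=Z\left(\mathcal{A}^{\lambda}\right)$ requires the commutator estimate of Lemma \ref{lemma-commu-d}, which is the technically heavy part (the factorization $\mathcal{M}=\mathcal{M}_{2}\mathcal{M}_{1}$, Cordes' kernel bounds, and the BMO commutator estimate for the fractional piece). Your route is shorter, bypasses all of that machinery, and in fact yields the slightly sharper bound $\operatorname{Re}z\geq 1-\frac{1}{c}$. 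Two caveats. First, as you note yourself, for non-self-adjoint operators the Fredholm-based essential spectra are stable under relatively compact perturbations, but the ``complement of the discrete spectrum'' version is not automatically so; here this is harmless because the half-plane $\left\{\operatorname{Re}z<1-\frac{1}{c}\right\}$ is connected and contains points of the resolvent set of $\mathcal{A}^{\lambda}$ (e.g. real $z\ll 0$, since the numerical range of $\mathcal{A}^{\lambda}$ is confined to $\operatorname{Re}z\geq 1-M$ with $M=\sup_{\lambda}\Vert\mathcal{K}^{\lambda}\Vert$), so the spectrum there consists of isolated eigenvalues of finite multiplicity, which is exactly what the continuation argument needs. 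Second, the paper's commutator lemma is not wasted effort: essentially the same estimates reappear in Lemma \ref{lemma-commu-F} to set up the Vock--Hunziker asymptotic perturbation theory near $\lambda=0$, a payoff your decomposition does not provide.
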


The proof of Proposition \ref{prop-essential} is based on the following lemmas.

\begin{lemma}
\label{lemma-quadrtic-bound-lb}Consider any sequence
\[
\left\{  u_{n}\right\}  \in H^{m}\left(  \mathbf{R}\right)  ,\ \left\Vert
u_{n}\right\Vert _{2}=1,\ supp\ u_{n}\subset\left\{  x|\ \left\vert
x\right\vert \geq n\right\}  .
\]
Then for any complex number $z\ $with $\operatorname{Re}z<\frac{1}{2}\left(
1-\frac{1}{c}\right)  $, we have
\[
\operatorname{Re}\left(  \left(  \mathcal{A}^{\lambda}-z\right)  u_{n}%
,u_{n}\right)  \geq\frac{1}{4}\left(  1-\frac{1}{c}\right)  ,
\]
when $n$ is large enough.
\end{lemma}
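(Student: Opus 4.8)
The plan is to estimate the real part of the quadratic form directly, exploiting the splitting of $\mathcal{A}^{\lambda}$ into the positive self-adjoint part $\mathcal{M}+1$ and the lower-order term $-\frac{1}{c}\left(  1-\mathcal{E}^{\lambda,-}\right)  \left(  1+f^{\prime}\left(  u_{c}\right)  \right)  $. First I would write, using $\left\Vert u_{n}\right\Vert _{2}=1$,
\[
\operatorname{Re}\left(  \left(  \mathcal{A}^{\lambda}-z\right)  u_{n},u_{n}\right)  =\left(  \mathcal{M}u_{n},u_{n}\right)  +1-\operatorname{Re}z-\frac{1}{c}\operatorname{Re}\left(  \left(  1-\mathcal{E}^{\lambda,-}\right)  \left(  1+f^{\prime}\left(  u_{c}\right)  \right)  u_{n},u_{n}\right)  .
\]
Since $\alpha\left(  k\right)  \geq0$, the operator $\mathcal{M}$ is self-adjoint and nonnegative, so $\left(  \mathcal{M}u_{n},u_{n}\right)  \geq0$ and may simply be discarded. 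It then remains to bound the last term from above by something close to $\frac{1}{c}$.

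Second, I would split $\left(  1+f^{\prime}\left(  u_{c}\right)  \right)  u_{n}=u_{n}+f^{\prime}\left(  u_{c}\right)  u_{n}$ and treat the two resulting pieces separately. For the principal piece, passing to the Fourier side and using that $\lambda>0$ is real, the symbol of $1-\mathcal{E}^{\lambda,-}$ is $\frac{-ick}{\lambda-ick}$, whose real part equals $\frac{c^{2}k^{2}}{\lambda^{2}+c^{2}k^{2}}\in\left[  0,1\right]  $. Hence
\[
\operatorname{Re}\left(  \left(  1-\mathcal{E}^{\lambda,-}\right)  u_{n},u_{n}\right)  =\int_{\mathbb{R}}\frac{c^{2}k^{2}}{\lambda^{2}+c^{2}k^{2}}\left\vert \hat{u}_{n}\left(  k\right)  \right\vert ^{2}dk\leq\int_{\mathbb{R}}\left\vert \hat{u}_{n}\left(  k\right)  \right\vert ^{2}dk=1.
\]
Consequently the term $-\frac{1}{c}\operatorname{Re}\left(  \left(  1-\mathcal{E}^{\lambda,-}\right)  u_{n},u_{n}\right)  $ is bounded below by $-\frac{1}{c}$; this is precisely the source of the constant $1-\frac{1}{c}$ in the final estimate.

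Third, I would dispose of the cross piece $\operatorname{Re}\left(  \left(  1-\mathcal{E}^{\lambda,-}\right)  f^{\prime}\left(  u_{c}\right)  u_{n},u_{n}\right)  $ using the support condition. By Cauchy--Schwarz and the uniform bound $\left\Vert 1-\mathcal{E}^{\lambda,-}\right\Vert _{L^{2}\rightarrow L^{2}}\leq1$ from Lemma \ref{lemma-e-lb}, its absolute value is at most $\left\Vert f^{\prime}\left(  u_{c}\right)  u_{n}\right\Vert _{2}\leq\varepsilon_{n}$, where $\varepsilon_{n}:=\sup_{\left\vert x\right\vert \geq n}\left\vert f^{\prime}\left(  u_{c}\left(  x\right)  \right)  \right\vert $. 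Because $u_{c}\left(  x\right)  \rightarrow0$ as $\left\vert x\right\vert \rightarrow\infty$ and $f$ is $C^{1}$ with $f^{\prime}\left(  0\right)  =0$, we have $\varepsilon_{n}\rightarrow0$. Collecting the estimates gives $\operatorname{Re}\left(  \left(  \mathcal{A}^{\lambda}-z\right)  u_{n},u_{n}\right)  \geq1-\frac{1}{c}-\operatorname{Re}z-\frac{\varepsilon_{n}}{c}$, and since $\operatorname{Re}z<\frac{1}{2}\left(  1-\frac{1}{c}\right)  $ the leading constant exceeds $\frac{1}{2}\left(  1-\frac{1}{c}\right)  $; choosing $n$ large enough that $\frac{\varepsilon_{n}}{c}\leq\frac{1}{4}\left(  1-\frac{1}{c}\right)  $ yields the claim.

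The step I expect to be the main obstacle is controlling the cross term, because $\mathcal{E}^{\lambda,-}$ is a genuinely nonlocal operator and does not respect the support of $u_{n}$; one cannot argue pointwise that $f^{\prime}\left(  u_{c}\right)  $ is small where $u_{n}$ is concentrated once the operator has been applied. The resolution is to keep the multiplication by $f^{\prime}\left(  u_{c}\right)  $ \emph{before} applying $1-\mathcal{E}^{\lambda,-}$, so that the $L^{2}$ smallness of $f^{\prime}\left(  u_{c}\right)  u_{n}$, which does see the support of $u_{n}$, is preserved under the uniformly bounded operator. This device of localizing the multiplier, together with the nonnegativity of $\mathcal{M}$, is what makes the essential-spectrum bound (\ref{bound-essential}) hold uniformly in $\lambda$.
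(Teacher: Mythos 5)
Your proposal is correct and follows essentially the same route as the paper: the same splitting of $\mathcal{A}^{\lambda}$ into $\mathcal{M}+1$ plus the bounded nonlocal term, the operator bound $\left\Vert 1-\mathcal{E}^{\lambda,\pm}\right\Vert _{L^{2}\rightarrow L^{2}}\leq1$, and the support condition forcing $\left\Vert f^{\prime}\left(  u_{c}\right)  u_{n}\right\Vert _{2}\rightarrow0$. The only cosmetic difference is that you compute the Fourier symbol of $1-\mathcal{E}^{\lambda,-}$ explicitly for the principal piece and keep the operator on the left, whereas the paper passes to the adjoint $\mathcal{E}^{\lambda,+}$ and applies Cauchy--Schwarz to the combined term $\left(  1+f^{\prime}\left(  u_{c}\right)  \right)  u_{n}$ at once; both yield the identical bound.
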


\begin{proof}
We have
\begin{align*}
&  \ \ \ \ \operatorname{Re}\left(  \left(  \mathcal{A}^{\lambda}-z\right)
u_{n},u_{n}\right) \\
&  =\left(  \left(  \mathcal{M}+1\right)  u_{n},u_{n}\right)
-\operatorname{Re}z-\operatorname{Re}\left(  \frac{1}{c}\left(  1-\mathcal{E}%
^{\lambda,-}\right)  \left(  1+f^{\prime}\left(  u_{c}\right)  \right)
u_{n},u_{n}\right) \\
\ \ \  &  =\left(  \left(  \mathcal{M}+1\right)  u_{n},u_{n}\right)
-\operatorname{Re}z-\frac{1}{c}\operatorname{Re}\left(  \left(  1+f^{\prime
}\left(  u_{c}\right)  \right)  u_{n},\left(  1-\mathcal{E}^{\lambda
,+}\right)  u_{n}\right) \\
&  \geq1-\frac{1}{2}\left(  1-\frac{1}{c}\right)  -\frac{1}{c}\left(
1+\max_{\left\vert x\right\vert \geq n}\left\vert f^{\prime}\left(
u_{c}\right)  \right\vert \right)  \left\Vert \left(  1-\mathcal{E}%
^{\lambda,+}\right)  u_{n}\right\Vert _{2}\\
&  \geq\frac{1}{2}\left(  1-\frac{1}{c}\right)  -\frac{1}{c}\max_{\left\vert
x\right\vert \geq n}\left\vert f^{\prime}\left(  u_{c}\right)  \right\vert
\ \text{(by Lemma \ref{lemma-e-lb} (a))}\\
&  \geq\frac{1}{4}\left(  1-\frac{1}{c}\right)  ,\text{ when }n\text{ is big
enough.}%
\end{align*}

\end{proof}

To study the essential spectrum of $\mathcal{A}^{\lambda}$, first we introduce
the Zhislin Spectrum $Z\left(  \mathcal{A}^{\lambda}\right)  $
(\cite{hislop-sig-book}). A Zhislin sequence for $\mathcal{A}^{\lambda}$ and
$z\in\mathbb{C}$ is a sequence
\[
\left\{  u_{n}\right\}  \in H^{m},\ \left\Vert u_{n}\right\Vert _{2}%
=1,\ supp\ u_{n}\subset\left\{  x|\ \left\vert x\right\vert \geq n\right\}
\]
and $\left\Vert \left(  \mathcal{A}^{\lambda}-z\right)  u_{n}\right\Vert
_{2}\rightarrow0$ as $n\rightarrow\infty$. The set of all $z$ such that a
Zhislin sequence exists for $\mathcal{A}^{\lambda}$ and $z$ is denoted
$Z\left(  \mathcal{A}^{\lambda}\right)  $. From the above definition and Lemma
\ref{lemma-quadrtic-bound-lb}, we readily have
\begin{equation}
Z\left(  \mathcal{A}^{\lambda}\right)  \subset\left\{  z\in\mathbb{C}%
|\ \operatorname{Re}z\geq\frac{1}{2}\left(  1-\frac{1}{c}\right)  \right\}
\text{.} \label{bound-zhislin}%
\end{equation}
Another related spectrum is the Weyl spectrum $W\left(  \mathcal{A}^{\lambda
}\right)  $ (\cite{hislop-sig-book}). A Weyl sequence for $\mathcal{A}%
^{\lambda}$ and $z\in\mathbb{C\ }$\ is a sequence $\left\{  u_{n}\right\}  \in
H^{m},\left\Vert u_{n}\right\Vert _{2}=1,\ u_{n}\rightarrow0$ weakly in
$L^{2}$ and $\left\Vert \left(  \mathcal{A}^{\lambda}-z\right)  u_{n}%
\right\Vert _{2}\rightarrow0$ as $n\rightarrow\infty$. The set $W\left(
\mathcal{A}^{\lambda}\right)  $ is all $z\in\mathbb{C}$ such that a Weyl
sequence exists for $\mathcal{A}^{\lambda}$ and $z$. By (\cite[Theorem
10.10]{hislop-sig-book}), $W\left(  \mathcal{A}^{\lambda}\right)
\subset\sigma_{\text{ess}}\left(  \mathcal{A}^{\lambda}\right)  $ and the
boundary of $\sigma_{\text{ess}}\left(  \mathcal{A}^{\lambda}\right)  $ is
contained in $W\left(  \mathcal{A}^{\lambda}\right)  $. So it suffices to show
that $W\left(  \mathcal{A}^{\lambda}\right)  =Z\left(  \mathcal{A}^{\lambda
}\right)  $, which together with (\ref{bound-zhislin}) implies
(\ref{bound-essential}). By (\cite[Theorem 10.12]{hislop-sig-book}), the proof
of $W\left(  \mathcal{A}^{\lambda}\right)  =Z\left(  \mathcal{A}^{\lambda
}\right)  $ is reduced to prove the following lemma.

\begin{lemma}
\label{lemma-commu-d}Given $\lambda>0$. Let $\chi\in C_{0}^{\infty}\left(
\mathbf{R}\right)  $ be a cut-off function such that $\chi|_{\left\{
\left\vert x\right\vert \leq R_{0}\right\}  }=1$, for some $R_{0}>0$. Define
$\chi_{d}=\chi\left(  x/d\right)  ,\ d>0.$ Then for each $d,\ \chi_{d}\left(
\mathcal{A}^{\lambda}-z\right)  ^{-1}$ is compact for some $z\in\rho\left(
\mathcal{A}^{\lambda}\right)  $, and that there exists $C\left(  d\right)
\rightarrow0$ as $d\rightarrow\infty$ such that for any $u\in C_{0}^{\infty
}\left(  \mathbf{R}\right)  $,
\begin{equation}
\left\Vert \left[  \mathcal{A}^{\lambda},\chi_{d}\right]  u\right\Vert
_{2}\leq C\left(  d\right)  \left(  \left\Vert \mathcal{A}^{\lambda
}u\right\Vert _{2}+\left\Vert u\right\Vert _{2}\right)  .
\label{estimate-comm-d}%
\end{equation}

\end{lemma}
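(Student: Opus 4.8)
The plan is to treat the two assertions separately: the compactness of $\chi_{d}(\mathcal{A}^{\lambda}-z)^{-1}$ is soft, while the commutator bound (\ref{estimate-comm-d}) carries all the analytic content. First I would record the functional-analytic setup. By Lemma \ref{lemma-e-lb} the operator $\mathcal{E}^{\lambda,-}$ is bounded on $L^{2}$ with norm $\leq 1$, and $1+f'(u_{c})$ is a bounded multiplication operator since $u_{c}$ is bounded and $f\in C^{1}$; hence $\mathcal{A}^{\lambda}$ is a bounded $L^{2}$-perturbation of the self-adjoint operator $\mathcal{M}+1$, which has domain $H^{m}$ and (using $\alpha\geq0$) spectrum in $[1,\infty)$. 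Thus $\mathcal{A}^{\lambda}$ is closed on $H^{m}$, and for $z$ with $|\operatorname{Im}z|$ large one has $z\in\rho(\mathcal{A}^{\lambda})$ with $(\mathcal{A}^{\lambda}-z)^{-1}\colon L^{2}\to H^{m}$ bounded, by elliptic regularity (the symbol $\alpha(k)+1$ being comparable to $\langle k\rangle^{m}$). For the compactness I would then use that multiplication by the compactly supported $\chi_{d}\in C_{0}^{\infty}$ is compact from $H^{m}$ into $L^{2}$ (Rellich, since $m\geq1>0$ and the image has fixed compact support); composing this compact map with the bounded resolvent gives compactness of $\chi_{d}(\mathcal{A}^{\lambda}-z)^{-1}$ on $L^{2}$.

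The core is (\ref{estimate-comm-d}). Because $1+f'(u_{c})$ is a multiplication operator it commutes with $\chi_{d}$, so
\[
[\mathcal{A}^{\lambda},\chi_{d}]=[\mathcal{M},\chi_{d}]+\tfrac{1}{c}\,[\mathcal{E}^{\lambda,-},\chi_{d}]\,(1+f'(u_{c})).
\]
The second term is easy: writing $\mathcal{E}^{\lambda,-}=\lambda(\lambda-\mathcal{D})^{-1}$ and using the resolvent identity yields $[\mathcal{E}^{\lambda,-},\chi_{d}]=\mathcal{E}^{\lambda,-}[\mathcal{D},\chi_{d}](\lambda-\mathcal{D})^{-1}$, where $[\mathcal{D},\chi_{d}]=c\chi_{d}'=\tfrac{c}{d}\chi'(\cdot/d)$ is multiplication of norm $\leq \tfrac{c}{d}\|\chi'\|_{\infty}$. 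Combined with $\|\mathcal{E}^{\lambda,-}\|\leq1$ and $\|(\lambda-\mathcal{D})^{-1}\|\leq\lambda^{-1}$, this bounds the term by $\tfrac{C}{\lambda d}\|u\|_{2}$, which tends to $0$ as $d\to\infty$.

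The remaining and genuinely hard term is $[\mathcal{M},\chi_{d}]$, where $\mathcal{M}$ is nonlocal of order $m$. Here I would pass to the Fourier side, where the operator has kernel
\[
\widehat{[\mathcal{M},\chi_{d}]u}(k)=\int_{\mathbf{R}}\widehat{\chi_{d}}(k-l)\,[\alpha(k)-\alpha(l)]\,\hat{u}(l)\,dl .
\]
The scaling $\widehat{\chi_{d}}(\xi)=d\,\hat{\chi}(d\xi)$ concentrates this kernel near $k=l$ on scale $1/d$. Writing $\alpha(k)-\alpha(l)=(k-l)\int_{0}^{1}\alpha'(l+t(k-l))\,dt$ and using the symbol bound $|\alpha'(k)|\lesssim\langle k\rangle^{m-1}$, the factor $(k-l)$ combines with the concentration to produce a gain of $1/d$, while the residual growth $\langle l\rangle^{m-1}$ is absorbed by pairing against $\|u\|_{H^{m-1}}$. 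Concretely, a Schur test applied to $\widehat{\chi_{d}}(k-l)[\alpha(k)-\alpha(l)]\langle l\rangle^{-(m-1)}$, after the substitution $\eta=d(k-l)$ and Peetre's inequality, gives $\|[\mathcal{M},\chi_{d}]\|_{H^{m-1}\to L^{2}}\lesssim 1/d$. Since $\|u\|_{H^{m-1}}\leq\|u\|_{H^{m}}\lesssim\|\mathcal{M}u\|_{2}+\|u\|_{2}\lesssim\|\mathcal{A}^{\lambda}u\|_{2}+\|u\|_{2}$ (the last step because the perturbation is $L^{2}$-bounded), combining the two commutator estimates yields (\ref{estimate-comm-d}) with $C(d)=O(1/d)\to0$.

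The main obstacle is precisely this last estimate. Because $\mathcal{M}$ may be genuinely nonlocal rather than a differential operator, the finite Leibniz expansion is unavailable, and the $1/d$ gain must be extracted entirely from the scaling of $\widehat{\chi_{d}}$ together with the symbol regularity of $\alpha$. In carrying this out the hypothesis $a|k|^{m}\leq\alpha(k)\leq b|k|^{m}$ should be read as accompanied by the standard symbol bounds $|\alpha^{(j)}(k)|\lesssim\langle k\rangle^{m-j}$ (satisfied by all the model symbols), with the bounded low-frequency region handled separately as a harmless order-zero contribution.
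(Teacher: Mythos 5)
Your proposal is correct, and for the heart of the lemma --- the commutator $\left[\mathcal{M},\chi_{d}\right]$ --- it takes a genuinely different route from the paper. The paper factors $\mathcal{M}=\mathcal{M}_{2}\mathcal{M}_{1}$ with $\mathcal{M}_{1}=1\pm(d/dx)^{l}$, $l=[m]$, handles $\left[\mathcal{M}_{1},\chi_{d}\right]$ by a finite Leibniz expansion, and controls $\left[\mathcal{M}_{2},\chi_{d}\right]$ by a Cordes-style Hilbert--Schmidt kernel bound in the integer case together with Murray's theorem on commutators with fractional differentiation and BMO when $\delta=m-[m]>0$; this yields $C(d)=O(d^{-1/2}+d^{-\delta})$. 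You instead estimate the full commutator in one stroke on the Fourier side via a Schur test on the kernel $\widehat{\chi_{d}}(k-l)\left[\alpha(k)-\alpha(l)\right]\langle l\rangle^{-(m-1)}$, using the scaling $\widehat{\chi_{d}}(\xi)=d\hat{\chi}(d\xi)$, a first-order Taylor expansion of the symbol, and Peetre's inequality; this avoids the integer/fractional case split and the appeal to the Cordes and Murray theorems, and gives the cleaner rate $O(1/d)$ (only $C(d)\to0$ is needed). The price is that your Schur test requires the pointwise bound $|\alpha'(k)|\lesssim\langle k\rangle^{m-1}$, which is not among the paper's stated hypotheses; but the paper's own argument likewise needs $\alpha$ differentiable with $n'\in L^{2}$ for $n(k)=\alpha(k)/(1\pm(ik)^{l})$ --- an integrated, marginally weaker version of the same assumption --- so you are right to flag this as an implicit standing hypothesis rather than a defect of either argument. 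Your treatment of the remaining pieces (compactness of $\chi_{d}(\mathcal{A}^{\lambda}-z)^{-1}$ by Rellich plus boundedness of the resolvent from $L^{2}$ to $H^{m}$, the resolvent-identity bound $C/(\lambda d)$ for $\left[\mathcal{E}^{\lambda,-},\chi_{d}\right]$, and the equivalence of the graph norm of $\mathcal{A}^{\lambda}$ with $\|\cdot\|_{H^{m}}$) coincides with the paper's.
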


\begin{proof}
We write $\mathcal{A}^{\lambda}=\mathcal{M}+1+\mathcal{K}^{\lambda}$, where
\begin{equation}
\mathcal{K}^{\lambda}=\frac{1}{c}\left(  1-\mathcal{E}^{\lambda,-}\right)
\left(  1+f^{\prime}\left(  u_{c}\right)  \right)  :L^{2}\rightarrow L^{2}
\label{defn-cal-K}%
\end{equation}
is bounded. So $-k\in\rho\left(  \mathcal{A}^{\lambda}\right)  $ when $k>0$ is
sufficiently large. The compactness of $\chi_{d}\left(  \mathcal{A}^{\lambda
}+k\right)  ^{-1}$ is a corollary of the local compactness of $H^{m}%
\hookrightarrow L^{2}$. To show (\ref{estimate-comm-d}), we note that the
graph norm of $\mathcal{A}^{\lambda}$ is equivalent to $\left\Vert
\cdot\right\Vert _{H^{m}}$. Below, we use $C$ to denote a generic constant.
First, we have
\begin{align*}
\left[  \mathcal{K}^{\lambda},\chi_{d}\right]   &  =-\frac{1}{c}\left[
\mathcal{E}^{\lambda,-},\chi_{d}\right]  \left(  1+f^{\prime}\left(
u_{c}\right)  \right)  =-\frac{1}{c}\frac{\mathcal{\lambda}}{\lambda
-\mathcal{D}}\left[  \mathcal{D},\chi_{d}\right]  \frac{1}{\lambda
-\mathcal{D}}\left(  1+f^{\prime}\left(  u_{c}\right)  \right) \\
&  =\frac{1}{\lambda cd}\mathcal{E}^{\lambda,-}\chi^{\prime}\left(
x/d\right)  \mathcal{E}^{\lambda,-}\left(  1+f^{\prime}\left(  u_{c}\right)
\right)
\end{align*}
and thus
\begin{equation}
\left\Vert \left[  \mathcal{K}^{\lambda},\chi_{d}\right]  \right\Vert
_{L^{2}\rightarrow L^{2}}\leq\frac{C}{\lambda d}. \label{inter1}%
\end{equation}
Let $l=\left[  m\right]  $ to be the largest integer no greater than $m$ and
$\delta=m-\left[  m\right]  \in\lbrack0,1)$. Define the following two
operators
\begin{equation}
\mathcal{M}_{1}=\left\{
\begin{array}
[c]{cc}%
1+\left(  \frac{d}{dx}\right)  ^{l} & \text{if }l\neq2\ \operatorname{mod}%
\ 4\\
1-\left(  \frac{d}{dx}\right)  ^{l} & \text{if }l=2\ \operatorname{mod}\ 4.
\end{array}
\right.  \label{defn-m1}%
\end{equation}
and $\mathcal{M}_{2}$ is the Fourier multiplier operator with the symbol
\begin{equation}
n\left(  k\right)  =\left\{
\begin{array}
[c]{cc}%
\frac{\alpha\left(  k\right)  }{1+\left(  ik\right)  ^{l}} & \text{if }%
l\neq2\operatorname{mod}4\\
\frac{\alpha\left(  k\right)  }{1-\left(  ik\right)  ^{l}} & \text{if
}l=2\operatorname{mod}4.
\end{array}
\right.  . \label{defn-m2}%
\end{equation}
Then $\mathcal{M}=\mathcal{M}_{2}\mathcal{M}_{1}$ and
\[
\left[  \mathcal{M},\chi_{d}\right]  =\mathcal{M}_{2}\left[  \mathcal{M}%
_{1},\chi_{d}\right]  +\left[  \mathcal{M}_{2},\chi_{d}\right]  \mathcal{M}%
_{1}.
\]
We study $\left[  \mathcal{M}_{2},\chi_{d}\right]  $ in two cases. When
$\delta=0$, that is, $m$ is an integer, for any $v\in C_{0}^{\infty}\left(
\mathbf{R}\right)  $, we follow \cite[P.127-128]{cordes} to write
\begin{align*}
\left[  \mathcal{M}_{2},\chi_{d}\right]  v  &  =-\left(  2\pi\right)
^{-\frac{1}{2}}\int\check{n}\left(  x-y\right)  \left(  \chi_{d}\left(
x\right)  -\chi_{d}\left(  y\right)  \right)  v\left(  y\right)  dy\\
&  =-\int_{0}^{1}\int\left(  2\pi\right)  ^{-\frac{1}{2}}\left(  x-y\right)
\check{n}\left(  x-y\right)  \chi_{d}^{\prime}\left(  \rho\left(  x-y\right)
+y\right)  v\left(  y\right)  dyd\rho\\
&  =\int_{0}^{1}A_{\rho}v\ d\rho,
\end{align*}
where $A_{\rho}$ is the integral operator with the kernel function
\[
K_{\rho}\left(  x,y\right)  =-\left(  2\pi\right)  ^{-\frac{1}{2}}\left(
x-y\right)  \check{n}\left(  x-y\right)  \chi_{d}^{\prime}\left(  \rho\left(
x-y\right)  +y\right)  .
\]
Note that $\beta\left(  x\right)  =x\check{n}\left(  x\right)  $ is the
inverse Fourier transformation of $in^{\prime}\left(  k\right)  $ and
$n^{\prime}\left(  k\right)  \in L^{2}\ $when $l=m$, so $\beta\left(
x\right)  \in L^{2}$. Thus
\begin{align*}
\int\int\left\vert K_{\rho}\left(  x,y\right)  \right\vert ^{2}dxdy  &
=2\pi\int\int\left\vert \beta\right\vert ^{2}\left(  x-y\right)  \left\vert
\chi_{d}^{\prime}\right\vert ^{2}\left(  \rho\left(  x-y\right)  +y\right)
\ dxdy\\
&  =2\pi\int\int\left\vert \beta\right\vert ^{2}\left(  x\right)  \left\vert
\chi_{d}^{\prime}\right\vert ^{2}\left(  y\right)  \ dxdy=2\pi\left\Vert
\beta\right\Vert _{L_{2}}^{2}\left\Vert \chi_{d}^{\prime}\right\Vert _{L^{2}%
}^{2}\\
&  =\frac{2\pi}{d}\left\Vert \beta\right\Vert _{L_{2}}^{2}\left\Vert
\chi^{\prime}\right\Vert _{L^{2}}^{2}.
\end{align*}
So
\[
\left\Vert \left[  \mathcal{M}_{2},\chi_{d}\right]  \right\Vert _{L^{2}%
\rightarrow L^{2}}\leq\frac{C}{d^{\frac{1}{2}}}%
\]
and
\[
\left\Vert \left[  \mathcal{M}_{2},\chi_{d}\right]  \mathcal{M}_{1}%
u\right\Vert _{L^{2}}\leq\frac{C}{d^{\frac{1}{2}}}\left\Vert \mathcal{M}%
_{1}u\right\Vert _{L^{2}}\leq\frac{C}{d^{\frac{1}{2}}}\left\Vert u\right\Vert
_{H^{m}}.
\]
When $\delta>0$, we define two Fourier multiplier operators $\mathcal{M}_{3}$
and $\mathcal{M}_{4}$ with symbols $1+\left\vert k\right\vert ^{\delta}$ and
$n_{1}\left(  k\right)  =n\left(  k\right)  /\left(  1+\left\vert k\right\vert
^{\delta}\right)  $ respectively. Then $\mathcal{M}_{2}=\mathcal{M}%
_{3}\mathcal{M}_{4}$ and
\[
\left[  \mathcal{M}_{2},\chi_{d}\right]  =\mathcal{M}_{4}\left[
\mathcal{M}_{3},\chi_{d}\right]  +\left[  \mathcal{M}_{4},\chi_{d}\right]
\mathcal{M}_{3}.
\]
Since $n_{1}^{\prime}\left(  k\right)  \in L^{2}$, by the same argument as
above, we have
\[
\left\Vert \left[  \mathcal{M}_{4},\chi_{d}\right]  \right\Vert _{L^{2}%
\rightarrow L^{2}}\leq\frac{C}{d^{\frac{1}{2}}}.
\]
By \cite[P. 213, Theorem 3.3]{murray},
\[
\left\Vert \left[  \mathcal{M}_{3},\chi_{d}\right]  \right\Vert _{L^{2}%
\rightarrow L^{2}}\leq C\left(  \delta\right)  \left\Vert \left\vert
D\right\vert ^{\delta}\chi_{d}\right\Vert _{\ast},
\]
where $\left\vert D\right\vert ^{\delta}$ is the fractional differentiation
operator with the symbol $\left\vert k\right\vert ^{\delta}$ and $\left\Vert
\cdot\right\Vert _{\ast}$ is the BMO norm. By using Fourier transformations,
it is easy to check that
\[
\left(  \left\vert D\right\vert ^{\delta}\chi_{d}\right)  \left(  x\right)
=\frac{1}{d^{\delta}}\left(  \left\vert D\right\vert ^{\delta}\chi\right)
\left(  \frac{x}{d}\right)  .
\]
So%
\[
\left\Vert \left\vert D\right\vert ^{\delta}\chi_{d}\right\Vert _{\ast}%
\leq2\left\Vert \left\vert D\right\vert ^{\delta}\chi_{d}\right\Vert
_{L^{\infty}}\leq\frac{2}{d^{\delta}}\left\Vert \left\vert D\right\vert
^{\delta}\chi\right\Vert _{L^{\infty}}%
\]
and therefore
\[
\left\Vert \left[  \mathcal{M}_{3},\chi_{d}\right]  \right\Vert _{L^{2}%
\rightarrow L^{2}}\leq\frac{C}{d^{\delta}}.
\]
Since $\left\Vert \mathcal{M}_{4}\right\Vert _{L^{2}\rightarrow L^{2}}$ is
bounded, we have
\begin{align*}
\left\Vert \left[  \mathcal{M}_{2},\chi_{d}\right]  \mathcal{M}_{1}%
u\right\Vert _{L^{2}}  &  \leq\left\Vert \mathcal{M}_{4}\left[  \mathcal{M}%
_{3},\chi_{d}\right]  \mathcal{M}_{1}u\right\Vert _{L^{2}}+\left\Vert \left[
\mathcal{M}_{4},\chi_{d}\right]  \mathcal{M}_{3}\mathcal{M}_{1}u\right\Vert
_{L^{2}}\\
&  \leq\frac{C}{d^{\delta}}\left\Vert \mathcal{M}_{1}u\right\Vert _{L^{2}%
}+\frac{C}{d^{\frac{1}{2}}}\left\Vert \mathcal{M}_{3}\mathcal{M}%
_{1}u\right\Vert _{L^{2}}\leq C\left(  \frac{1}{d^{\delta}}+\frac{1}%
{d^{\frac{1}{2}}}\right)  \left\Vert u\right\Vert _{H^{m}}.
\end{align*}
So in both cases,
\begin{equation}
\left\Vert \left[  \mathcal{M}_{2},\chi_{d}\right]  \mathcal{M}_{1}%
u\right\Vert _{L^{2}}\leq C\left(  d\right)  \left\Vert u\right\Vert _{H^{m}%
}\text{, with }C\left(  d\right)  \rightarrow0\text{ as }d\rightarrow
\infty\text{. } \label{inter3}%
\end{equation}
Since
\[
\left[  \mathcal{M}_{1},\chi_{d}\right]  =\sum_{j=1}^{l}C_{j}^{l}\frac
{d^{j}\chi_{d}}{dx^{j}}\frac{d^{l-j}}{dx^{l-j}}\text{ or }-\sum_{j=1}^{l}%
C_{j}^{l}\frac{d^{j}\chi_{d}}{dx^{j}}\frac{d^{l-j}}{dx^{l-j}},
\]
and
\[
\frac{d^{j}\chi_{d}}{dx^{j}}\left(  x\right)  =\frac{1}{d^{j}}\chi^{\left(
j\right)  }\left(  \frac{x}{d}\right)  :=\frac{1}{d^{j}}\chi_{d}^{\left(
j\right)  },
\]
we have
\[
\left\Vert \mathcal{M}_{2}\left[  \mathcal{M}_{1},\chi_{d}\right]
u\right\Vert _{2}\leq\sum_{j=1}^{l}\frac{C_{j}^{l}}{d^{j}}\left(  \left\Vert
\left[  \mathcal{M}_{2},\chi_{d}^{\left(  j\right)  }\right]  u^{\left(
l-j\right)  }\right\Vert _{L^{2}}+\left\Vert \chi\right\Vert _{C^{l}%
}\left\Vert \mathcal{M}_{2}u^{\left(  l-j\right)  }\right\Vert _{L^{2}%
}\right)  .
\]
By similar estimates as above, when $\delta=0$,
\[
\left\Vert \left[  \mathcal{M}_{2},\chi_{d}^{\left(  j\right)  }\right]
u^{\left(  l-j\right)  }\right\Vert _{L^{2}}\leq\frac{C}{d^{\frac{1}{2}}%
}\left\Vert u^{\left(  l-j\right)  }\right\Vert _{L^{2}}\leq\frac{C}%
{d^{\frac{1}{2}}}\left\Vert u\right\Vert _{H^{m}}%
\]
and when $\delta>0$,
\[
\left\Vert \left[  \mathcal{M}_{2},\chi_{d}^{\left(  j\right)  }\right]
u^{\left(  l-j\right)  }\right\Vert _{L^{2}}\leq\frac{C}{d^{\delta}}\left\Vert
u^{\left(  l-j\right)  }\right\Vert _{L^{2}}+\frac{C}{d^{\frac{1}{2}}%
}\left\Vert \mathcal{M}_{3}u^{\left(  l-j\right)  }\right\Vert _{L^{2}}\leq
C\left(  \frac{1}{d^{\delta}}+\frac{1}{d^{\frac{1}{2}}}\right)  \left\Vert
u\right\Vert _{H^{m}}.
\]
Thus
\[
\left\Vert \mathcal{M}_{2}\left[  \mathcal{M}_{1},\chi_{d}\right]
u\right\Vert _{2}\leq C\left(  d\right)  \left\Vert u\right\Vert _{H^{m}%
}\text{, with }C\left(  d\right)  \rightarrow0\text{ as }d\rightarrow
\infty\text{. }%
\]
Combining above with (\ref{inter1}) and (\ref{inter3}), we get the estimate
(\ref{estimate-comm-d}). This finishes the proof of the lemma and Proposition
\ref{prop-essential}.
\end{proof}

To show the existence of growing modes, we need to find some $\lambda>0$ such
that $\mathcal{A}^{\lambda}$ has a nontrivial kernel. We use a continuation
strategy, by comparing the behavior of $\mathcal{A}^{\lambda}$ near $0$ and
infinity. First, we study the case near infinity. \ 

\begin{lemma}
\label{lemma-no-eigen-infy}There exists $\Lambda>0$, such that when
$\lambda>\Lambda$, $\mathcal{A}^{\lambda}$ has no eigenvalues in $\left\{
z|\ \operatorname{Re}z\leq0\right\}  $.
\end{lemma}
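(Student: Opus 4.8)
The plan is to argue by contradiction with a quantitative energy estimate, since Corollary~\ref{Cor-A-lb-convergen} only provides \emph{strong} convergence $\mathcal{A}^{\lambda}\to\mathcal{M}+1$ as $\lambda\to+\infty$, which is too weak to control eigenvalues directly. The limiting operator $\mathcal{M}+1$ is self-adjoint with $\sigma(\mathcal{M}+1)\subset[1,\infty)$, so it has no spectrum in $\{\operatorname{Re}z\le0\}$, and I would show that this feature persists for large $\lambda$. So I would suppose that for arbitrarily large $\lambda$ there is an eigenpair $\mathcal{A}^{\lambda}u=zu$ with $\operatorname{Re}z\le0$ and $\|u\|_{2}=1$, and derive a contradiction.

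The first step is to test the equation against $u$. Using $\mathcal{A}^{\lambda}=\mathcal{M}+1-\tfrac1c(1-\mathcal{E}^{\lambda,-})(1+f^{\prime}(u_{c}))$ together with the adjoint identity $(\mathcal{E}^{\lambda,-})^{*}=\mathcal{E}^{\lambda,+}$ (exactly the manipulation used in Lemma~\ref{lemma-quadrtic-bound-lb}), taking real parts gives
\[
((\mathcal{M}+1)u,u)=\operatorname{Re}z+\tfrac1c\operatorname{Re}\big((1+f^{\prime}(u_{c}))u,(1-\mathcal{E}^{\lambda,+})u\big).
\]
Since $\alpha\ge0$ forces $((\mathcal{M}+1)u,u)\ge\|u\|_{2}^{2}=1$, and $C_{0}:=1+\|f^{\prime}(u_{c})\|_{\infty}<\infty$ by the decay of $u_{c}$ and $f\in C^{1}$, the hypothesis $\operatorname{Re}z\le0$ yields
\[
1\le((\mathcal{M}+1)u,u)\le\tfrac{C_{0}}{c}\,\|(1-\mathcal{E}^{\lambda,+})u\|_{2}.
\]
In particular $\|(1-\mathcal{E}^{\lambda,+})u\|_{2}\ge c/C_{0}$, so the eigenfunction must carry a definite fraction of its $L^{2}$ mass at frequencies $|k|\gtrsim\lambda$, the only region where $1-\mathcal{E}^{\lambda,+}$ is not small.

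The decisive step is to show that this high-frequency concentration is incompatible with the bound above, because $\mathcal{M}$ penalizes high frequencies. Writing $|1-\mathcal{E}^{\lambda,+}|^{2}$ as the Fourier multiplier $c^{2}k^{2}/(\lambda^{2}+c^{2}k^{2})$ and splitting the frequency integral at a cutoff $|k|=K$ chosen beyond the threshold where $\alpha(k)\ge a|k|^{m}\ge a|k|$ (possible since $m\ge1$), the elementary bounds $c^{2}k^{2}/(\lambda^{2}+c^{2}k^{2})\le c^{2}K^{2}/\lambda^{2}$ for $|k|\le K$ and $\le1$ for $|k|>K$ give
\[
\|(1-\mathcal{E}^{\lambda,+})u\|_{2}^{2}\le\tfrac{c^{2}K^{2}}{\lambda^{2}}+\tfrac{1}{aK}\,((\mathcal{M}+1)u,u).
\]
Combining this with the previous inequality produces a closed quadratic inequality for $Y:=\|(1-\mathcal{E}^{\lambda,+})u\|_{2}$, namely $Y^{2}\le c^{2}K^{2}/\lambda^{2}+(C_{0}/acK)\,Y$. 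I would first fix $K$ large enough (independently of $\lambda$) that $C_{0}/(acK)<c/(2C_{0})$, and then take $\lambda>\Lambda$ large enough that $c^{2}K^{2}/\lambda^{2}$ is negligible; the positive root of the quadratic then lies strictly below $c/C_{0}$, contradicting $Y\ge c/C_{0}$. This yields the desired threshold $\Lambda$.

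The main obstacle is precisely this final balancing, not the convergence statement itself: the content of the lemma is the \emph{uniform} trade-off between the order-one loss of $1-\mathcal{E}^{\lambda,+}$ at high frequency and the coercivity gain $\alpha(k)\gtrsim|k|^{m}$ supplied by $\mathcal{M}$. Choosing the cutoff $K$ independently of $\lambda$ is exactly what closes the argument. I expect the remaining technical points—the boundedness of $f^{\prime}(u_{c})$ and the two elementary multiplier estimates on $\{|k|\le K\}$ and $\{|k|>K\}$—to be routine.
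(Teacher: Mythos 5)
Your proof is correct, and it takes a genuinely different route from the paper's. Both arguments start from the same identity obtained by testing $\mathcal{A}^{\lambda}u=zu$ against $u$ and moving the adjoint $(\mathcal{E}^{\lambda,-})^{*}=\mathcal{E}^{\lambda,+}$ onto the second factor; but from there the paper argues by compactness, whereas you argue quantitatively. The paper takes a sequence $\lambda_{n}\to\infty$ of putative eigenpairs, normalizes in the weighted norm $\left\Vert \cdot\right\Vert _{L_{e}^{2}}$ with $e=(f'(u_{c}))^{2}$, uses the energy identity only to get a uniform $H^{m/2}$ bound, extracts a nonzero weak limit $u_{\infty}$ (using the decay of $e$ and the confinement of eigenvalues to the strip $D_{M}$), and passes to the limit via the strong convergence $\mathcal{A}^{\lambda_{n}}\to\mathcal{M}+1$ to contradict $\sigma(\mathcal{M}+1)\subset[1,\infty)$. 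You instead close the loop on a single eigenfunction: the lower bound $\left\Vert (1-\mathcal{E}^{\lambda,+})u\right\Vert _{2}\geq c/C_{0}$ from the energy identity, the frequency-split upper bound $\left\Vert (1-\mathcal{E}^{\lambda,+})u\right\Vert _{2}^{2}\leq c^{2}K^{2}/\lambda^{2}+(aK)^{-1}((\mathcal{M}+1)u,u)$, and the resulting quadratic inequality in $Y$ are all correct (I checked the multiplier bounds and the choice $K>\max\{K_{0},1,2C_{0}^{2}/(ac^{2})\}$ followed by $\lambda$ large). What your version buys is an explicit, in-principle computable threshold $\Lambda$ and the avoidance of subsequences, weak limits, and the ``$u_{\infty}\neq0$'' localization step; it also does not use $1-\tfrac{1}{c}>0$ anywhere. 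What the paper's version buys is reusability: the same a priori $H^{m/2}$ estimate and weak-limit scheme are recycled verbatim in Lemma \ref{lemma-apriori} and in the proof of the moving kernel formula, so the compactness machinery is not wasted effort there.
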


\begin{proof}
Suppose otherwise, then there exists a sequence $\left\{  \lambda_{n}\right\}
\rightarrow\infty$, and $\left\{  k_{n}\right\}  \in\mathbb{C},\left\{
u_{n}\right\}  \in$ $H^{m}\left(  \mathbf{R}\right)  $, such that
$\operatorname{Re}k_{n}\leq0$ and $\left(  \mathcal{A}^{\lambda_{n}}%
-k_{n}\right)  u_{n}=0$. Since $\left\Vert \mathcal{A}^{\lambda}%
-\mathcal{M-}1\right\Vert =\mathcal{\ }\left\Vert \mathcal{K}^{\lambda
}\right\Vert \leq M$ \ for some constant $M$ independent of $\lambda$ and
$\mathcal{M}$ is a self-adjoint positive operator, all discrete eigenvalues of
$\mathcal{A}^{\lambda}$ lie in
\[
D_{M}=\left\{  z|\ \operatorname{Re}z\geq-M\text{ and }\left\vert
\operatorname{Im}z\right\vert \leq M\right\}  .
\]
Therefore, $k_{n}\rightarrow$ $k_{\infty}\in D_{M}$ with $\operatorname{Re}%
k_{\infty}\leq0$. Denote $e\left(  x\right)  =\left(  f^{\prime}\left(
u_{c}\right)  \right)  ^{2}$, then $e\left(  x\right)  \rightarrow0$ when
$\left\vert x\right\vert \rightarrow\infty$. We normalize $u_{n}$ by setting
$\left\Vert u_{n}\right\Vert _{L_{e}^{2}}=1$, where
\begin{equation}
\left\Vert u\right\Vert _{L_{e}^{2}}=\left(  \int e\left(  x\right)
\left\vert u\right\vert ^{2}\ dx\right)  ^{\frac{1}{2}}. \label{defn-L2-e}%
\end{equation}
We claim that
\begin{equation}
\left\Vert u_{n}\right\Vert _{H^{\frac{m}{2}}}\leq C\text{, for a constant
}C\text{ independent of }n\text{.} \label{claim-bound-infy}%
\end{equation}
Assuming (\ref{claim-bound-infy}), we have $u_{n}\rightarrow u_{\infty}$
weakly in $H^{m}$. Moreover, $u_{\infty}\neq0$. To show that, we choose $R>0$
large enough such that $\max_{\left\vert x\right\vert \geq R}e\left(
x\right)  \leq\frac{1}{2C}$. Then
\[
\int_{\left\vert x\right\vert \geq R}e\left(  x\right)  \left\vert
u_{n}\right\vert ^{2}\ dx\leq\frac{1}{2C}\left\Vert u_{n}\right\Vert _{L^{2}%
}\leq\frac{1}{2}.
\]
Since $u_{n}\rightarrow u_{\infty}$ strongly in $L^{2}(\left\{  \left\vert
x\right\vert \leq R\right\}  )$, we have
\[
\int_{\left\vert x\right\vert \leq R}e\left(  x\right)  \left\vert u_{\infty
}\right\vert ^{2}dx=\lim_{n\rightarrow\infty}\int_{\left\vert x\right\vert
\leq R}e\left(  x\right)  \left\vert u_{n}\right\vert ^{2}dx\geq\frac{1}{2}%
\]
and thus $u_{\infty}\neq0$. By Corollary \ref{Cor-A-lb-convergen},
$\mathcal{A}^{\lambda_{n}}\rightarrow\mathcal{M}+1$ strongly in $L^{2}$,
therefore $\mathcal{A}^{\lambda_{n}}u_{n}\rightarrow$ $\left(  \mathcal{M}%
+1\right)  u_{\infty}$ weakly and $\left(  \mathcal{M}+1\right)  u_{\infty
}=k_{\infty}u_{\infty}$. Since $\operatorname{Re}k_{\infty}\leq0$, this a
contradiction. It remains to show (\ref{claim-bound-infy}). From $\left(
\mathcal{A}^{\lambda_{n}}-k_{n}\right)  u_{n}=0$, we get%
\begin{align*}
0  &  \geq\operatorname{Re}k_{n}\left\Vert u_{n}\right\Vert _{^{2}}%
^{2}=\left(  \left(  \mathcal{M}+1\right)  u_{n},u_{n}\right)  -\frac{1}%
{c}\operatorname{Re}\left(  u_{n},\left(  1-\mathcal{E}^{\lambda,+}\right)
u_{n}\right) \\
&  \ \ \ \ \ \ \ \ \ \ \ \ \ \ \ \ \ \ \ \ \ \ -\frac{1}{c}\operatorname{Re}%
\left(  f^{\prime}\left(  u_{c}\right)  u_{n},\left(  1-\mathcal{E}%
^{\lambda,+}\right)  u_{n}\right)  .
\end{align*}
By our assumption on the symbol $\alpha\left(  k\right)  \ $of $\mathcal{M}$,
there exists $K>0$ such that $\alpha\left(  k\right)  \geq a\left\vert
k\right\vert ^{m}$ when $\left\vert k\right\vert \geq K$. So for any
$\varepsilon,\delta>0$, from above and Lemma \ref{lemma-e-lb}, we have
\begin{align*}
0  &  \geq\left(  1-\delta\right)  \left\Vert u_{n}\right\Vert _{L^{2}}%
^{2}+a\int_{\left\vert k\right\vert \geq K}\left\vert k\right\vert
^{m}\left\vert \hat{u}\left(  k\right)  \right\vert ^{2}dk+\delta
\int_{\left\vert k\right\vert \leq K}\left\vert \hat{u}\left(  k\right)
\right\vert ^{2}dk\\
&  \ \ \ \ \ \ -\frac{1}{c}\left\Vert u_{n}\right\Vert _{L^{2}}^{2}-\frac
{1}{c}\left\Vert u_{n}\right\Vert _{L^{2}}\left\Vert u_{n}\right\Vert
_{L_{e}^{2}}\\
&  \geq\left(  1-\delta\right)  \left\Vert u_{n}\right\Vert _{L^{2}}^{2}%
+\min\left\{  \frac{\delta}{K^{m}},a\right\}  \int\left\vert k\right\vert
^{m}\left\vert \hat{u}\left(  k\right)  \right\vert ^{2}dk-\frac{1}%
{c}\left\Vert u_{n}\right\Vert _{L^{2}}^{2}\\
&  \ \ \ \ \ \ -\varepsilon\left\Vert u_{n}\right\Vert _{L^{2}}^{2}%
-\frac{\varepsilon}{4c^{2}}\left\Vert u_{n}\right\Vert _{L_{e}^{2}}^{2}\\
&  \geq\min\left\{  1-\frac{1}{c}-\delta-\varepsilon,\frac{\delta}{K^{m}%
},a\right\}  \left\Vert u_{n}\right\Vert _{H^{\frac{m}{2}}}^{2}-\frac
{\varepsilon}{4c^{2}}\left\Vert u_{n}\right\Vert _{L_{e}^{2}}^{2}%
\end{align*}
The bound (\ref{claim-bound-infy}) follows by choosing $\delta,\varepsilon>0$ small.
\end{proof}

\subsection{ Asymptotic perturbations near $\lambda=0$}

In this subsection, we study the spectra of $\mathcal{A}^{\lambda}$ for small
$\lambda$. When $\lambda\rightarrow0+,\ \mathcal{A}^{\lambda}\rightarrow
\mathcal{L}_{0}$ strongly in $L^{2}$, where $\mathcal{L}_{0}$ is defined by
(\ref{L0-BBM}). Since the convergence of $\mathcal{A}^{\lambda}\rightarrow
\mathcal{L}_{0}$ is rather weak, we could not use the regular perturbation
theory. Instead, we use the asymptotic perturbation theory developed by Vock
and Hunziker (\cite{vock-hunz82}), see also \cite{hislop-sig-book},
\cite{hunz-notes}. To apply this theory, we need some preliminary lemmas.

\begin{lemma}
\label{lemma-commu-F}Given $F\in C_{0}^{\infty}\left(  \mathbf{R}\right)  $.
Consider any sequence $\lambda_{n}\rightarrow0+$ and $\left\{  u_{n}\right\}
\in H^{m}\left(  \mathbf{R}\right)  $ satisfying
\begin{equation}
\left\Vert \mathcal{A}^{\lambda_{n}}u_{n}\right\Vert _{2}+\left\Vert
u_{n}\right\Vert _{2}\leq M_{1}<\infty\label{bound-graph-0}%
\end{equation}
for some constant $M_{1}$. Then if $w-\lim_{n\rightarrow\infty}u_{n}=0$, we
have
\begin{equation}
\lim_{n\rightarrow\infty}\left\Vert Fu_{n}\right\Vert _{2}=0 \label{lim1}%
\end{equation}
and
\begin{equation}
\lim_{n\rightarrow\infty}\left\Vert \left[  \mathcal{A}^{\lambda_{n}%
},F\right]  u_{n}\right\Vert _{2}=0. \label{lim2}%
\end{equation}

\end{lemma}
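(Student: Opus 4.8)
The plan is to first upgrade the hypothesis (\ref{bound-graph-0}) to a uniform $H^m$ bound and then extract both conclusions from weak convergence together with the local (Rellich) compactness of $H^m\hookrightarrow L^2$. Writing $\mathcal{A}^{\lambda}=\mathcal{M}+1+\mathcal{K}^{\lambda}$ as in (\ref{defn-cal-K}), the operator $\mathcal{K}^{\lambda}$ is bounded on $L^2$ uniformly in $\lambda$ (by Lemma \ref{lemma-e-lb}(a) and boundedness of $f^{\prime}(u_c)$), so $\|(\mathcal{M}+1)u_n\|_2\le \|\mathcal{A}^{\lambda_n}u_n\|_2+\|\mathcal{K}^{\lambda_n}u_n\|_2\le CM_1$. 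Since $\mathcal{M}+1$ has symbol $1+\alpha(k)\gtrsim (1+|k|^2)^{m/2}$, its graph norm is equivalent to $\|\cdot\|_{H^m}$, whence $\{u_n\}$ is bounded in $H^m$. As $w\text{-}\lim u_n=0$ in $L^2$, this bound forces $u_n\rightharpoonup 0$ weakly in $H^m$; because $F$ has compact support, Rellich compactness gives $u_n\to0$ strongly in $L^2(\operatorname{supp}F)$, which is exactly (\ref{lim1}).

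For (\ref{lim2}) I would split $[\mathcal{A}^{\lambda_n},F]=[\mathcal{M},F]+[\mathcal{K}^{\lambda_n},F]$. Since $F$ and $g=1+f^{\prime}(u_c)$ are both multiplication operators they commute, so $[\mathcal{K}^{\lambda_n},F]u_n$ reduces, up to the scalar $\tfrac1c$ and a sign, to $[\mathcal{E}^{\lambda_n,-},F](gu_n)$. The decisive move is to keep this commutator in raw form, $[\mathcal{E}^{\lambda_n,-},F](gu_n)=\mathcal{E}^{\lambda_n,-}\!\big(F\,gu_n\big)-F\,\mathcal{E}^{\lambda_n,-}(gu_n)$. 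The first piece is controlled by $\|\mathcal{E}^{\lambda_n,-}\|_{L^2\to L^2}\le1$ and $\|Fgu_n\|_2\le\|g\|_\infty\|Fu_n\|_2\to0$ from (\ref{lim1}). For the second piece, the key observation is that $\mathcal{E}^{\lambda,-}$ has symbol $\lambda/(\lambda-ick)$, which is $O(\lambda/|k|)$ at high frequency, so $\mathcal{E}^{\lambda,-}$ maps $L^2\to H^1$ with norm bounded uniformly for $\lambda_n$ small; thus $\mathcal{E}^{\lambda_n,-}(gu_n)$ is bounded in $H^1$, and since $\mathcal{E}^{\lambda_n,+}\to0$ strongly (Lemma \ref{lemma-e-lb}(b)) pairing against the adjoint shows $\mathcal{E}^{\lambda_n,-}(gu_n)\rightharpoonup0$ weakly. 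Rellich compactness then gives $F\,\mathcal{E}^{\lambda_n,-}(gu_n)\to0$ in $L^2$, so $[\mathcal{K}^{\lambda_n},F]u_n\to0$.

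It remains to treat $[\mathcal{M},F]u_n$, for which I would reuse the factorization $\mathcal{M}=\mathcal{M}_2\mathcal{M}_1$ of (\ref{defn-m1})--(\ref{defn-m2}) and the identity $[\mathcal{M},F]=\mathcal{M}_2[\mathcal{M}_1,F]+[\mathcal{M}_2,F]\mathcal{M}_1$. In the first term $[\mathcal{M}_1,F]$ is a differential operator of order $\le l-1$ with coefficients supported in $\operatorname{supp}F$, so $[\mathcal{M}_1,F]u_n\to0$ strongly in $H^{\delta}$ by local compactness, and $\mathcal{M}_2:H^{\delta}\to L^2$ is bounded. In the second term $\mathcal{M}_1u_n\rightharpoonup0$ weakly in $L^2$, while the kernel computation in the proof of Lemma \ref{lemma-commu-d} (now with $F$ in place of $\chi_d$) exhibits $[\mathcal{M}_2,F]$ as a Hilbert--Schmidt, hence compact, operator on $L^2$ (splitting $\mathcal{M}_2=\mathcal{M}_3\mathcal{M}_4$ when $\delta>0$); a compact operator sends the weakly null sequence $\mathcal{M}_1u_n$ to a strongly null one. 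Combining the pieces yields (\ref{lim2}).

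The anticipated main obstacle is precisely the $\mathcal{K}^{\lambda_n}$ commutator: the naive resolvent identity gives $[\mathcal{E}^{\lambda,-},F]=\frac{c}{\lambda}\mathcal{E}^{\lambda,-}F^{\prime}\mathcal{E}^{\lambda,-}$, whose prefactor $1/\lambda$ blows up as $\lambda\to0$, so no operator-norm bound can close the estimate. The resolution sketched above trades this apparent singularity for the uniform $L^2\to H^1$ smoothing of $\mathcal{E}^{\lambda,-}$ together with the weak-to-strong gain from Rellich compactness; verifying that $\mathcal{E}^{\lambda_n,-}(gu_n)$ is simultaneously $H^1$-bounded and weakly null is the technical heart of the argument, and the decay of the solitary wave enters only through the boundedness of $f^{\prime}(u_c)$ used in (\ref{lim1}).
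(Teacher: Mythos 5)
Your overall architecture coincides with the paper's: the same graph-norm upgrade to a uniform $H^{m}$ bound, the same use of local (Rellich) compactness for (\ref{lim1}), the same splitting $[\mathcal{A}^{\lambda_{n}},F]=[\mathcal{M},F]+[\mathcal{K}^{\lambda_{n}},F]$ with $\mathcal{M}=\mathcal{M}_{2}\mathcal{M}_{1}$, and the same two-term expansion of $[\mathcal{E}^{\lambda_{n},-},F](gu_{n})$ into $\mathcal{E}^{\lambda_{n},-}(Fgu_{n})-F\mathcal{E}^{\lambda_{n},-}(gu_{n})$ (the paper's $p_{n}+q_{n}$). Your handling of the second piece is actually cleaner than the paper's: the uniform bound $(1+|k|)\,\lambda/\sqrt{\lambda^{2}+c^{2}k^{2}}\leq 1+\lambda/c$ gives the $L^{2}\rightarrow H^{1}$ smoothing you claim, and the adjoint identity $(\mathcal{E}^{\lambda,-})^{\ast}=\mathcal{E}^{\lambda,+}$ together with Lemma \ref{lemma-e-lb}(b) gives weak nullity, so Rellich applies; the paper instead uses $\Vert\mathcal{E}^{\lambda,-}\Vert_{H^{m}\rightarrow H^{m}}\leq1$ and needs $(1+f^{\prime}(u_{c}))u_{n}$ to be bounded in $H^{m}$, which requires more of $f^{\prime}(u_{c})$ than the standing hypothesis $f\in C^{1}$ supplies. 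You also correctly identify why the operator-norm bound of Lemma \ref{lemma-commu-d} cannot be recycled here.

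The one genuine gap is your justification of the compactness of $[\mathcal{M}_{2},F]$ when $m$ is not an integer. For $\delta=0$ the kernel computation does exhibit $[\mathcal{M}_{2},F]$ as Hilbert--Schmidt (its kernel is $\int_{0}^{1}K_{\rho}\,d\rho$ with $\Vert K_{\rho}\Vert_{L^{2}(dx\,dy)}$ uniformly bounded, so Minkowski's integral inequality closes the argument), and a compact operator sends the weakly null sequence $\mathcal{M}_{1}u_{n}$ to a strongly null one. But for $\delta>0$ the splitting $\mathcal{M}_{2}=\mathcal{M}_{3}\mathcal{M}_{4}$ only makes the factor $[\mathcal{M}_{4},F]$ Hilbert--Schmidt; the remaining piece $\mathcal{M}_{4}[\mathcal{M}_{3},F]\mathcal{M}_{1}u_{n}$ requires $[\mathcal{M}_{3},F]\mathcal{M}_{1}u_{n}\rightarrow0$ strongly, and the commutator with the fractional multiplier $1+|k|^{\delta}$ is \emph{not} Hilbert--Schmidt: the symbol derivative $\delta|k|^{\delta-1}\operatorname{sgn}k$ is never in $L^{2}(\mathbf{R})$ (it fails at $k=0$ for $\delta<1/2$ and at infinity for $\delta\geq1/2$), equivalently the kernel $(F(x)-F(y))/|x-y|^{1+\delta}$ is not square-integrable near the diagonal once $\delta\geq1/2$. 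What Lemma \ref{lemma-commu-d} provides for $[\mathcal{M}_{3},\cdot]$ is only an operator-norm bound via Murray's BMO estimate, which cannot convert weak convergence into strong convergence. The paper sidesteps this by applying Cordes' compactness theorem (Theorem C of \cite{cordes}) directly to the full multiplier $\mathcal{M}_{2}$, whose symbol satisfies $n^{\prime}(k)\rightarrow0$ as $|k|\rightarrow\infty$; you should either invoke that result or supply a separate compactness argument for $[\,|D|^{\delta},F]$ on $L^{2}$.
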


\begin{proof}
Since (\ref{bound-graph-0}) implies that $\left\Vert u_{n}\right\Vert _{H^{m}%
}\leq C$, (\ref{lim1}) follows from the local compactness of $H^{m}%
\hookrightarrow L^{2}$. To prove (\ref{lim2}), we use the notations in the
proof of Lemma \ref{lemma-commu-d}. We write $\mathcal{A}^{\lambda_{n}%
}=\mathcal{M+}1\mathcal{+K}^{\lambda_{n}}$. Note that
\[
\left[  \mathcal{M},F\right]  =\left[  \mathcal{M}_{2}\mathcal{M}%
_{1},F\right]  =\mathcal{M}_{2}\left[  \mathcal{M}_{1},F\right]  +\left[
\mathcal{M}_{2},F\right]  \mathcal{M}_{1},
\]
where $\mathcal{M}_{1}\ $and $\mathcal{M}_{2}$ are defined in (\ref{defn-m1})
and (\ref{defn-m2}). Let $G\in C_{0}^{\infty}\left(  \mathbf{R}\right)  $
satisfying $G=1$ on the support of $F$. For any $\varepsilon>0$, we have
\begin{align*}
\left\Vert \left[  \mathcal{M}_{1},F\right]  u_{n}\right\Vert _{2\text{ }}  &
=\left\Vert \left[  \mathcal{M}_{1},F\right]  Gu_{n}\right\Vert _{2\text{ }%
}=\left\Vert \sum_{j=1}^{l}C_{j}^{l}\frac{d^{j}F}{dx^{j}}\frac{d^{l-j}\left(
Gu_{n}\right)  }{dx^{l-j}}\text{ }\right\Vert _{2}\\
&  \leq C\left\Vert u_{n}\right\Vert _{H^{l-1}}\leq\varepsilon\left\Vert
u_{n}\right\Vert _{H^{m}}+C_{\varepsilon}\left\Vert Gu_{n}\right\Vert _{2}.
\end{align*}
Since $\varepsilon$ is arbitrarily small and the second term tends to zero by
the local compactness, it follows that $\left\Vert \left[  \mathcal{M}%
_{1},F\right]  u_{n}\right\Vert _{2}\rightarrow0$ when $n\rightarrow\infty$.
Since $n^{\prime}\left(  k\right)  \rightarrow0$ when $\left\vert k\right\vert
\rightarrow\infty$, by \cite[Theorem C]{cordes} the commutator $\left[
\mathcal{M}_{2},F\right]  :L^{2}\rightarrow L^{2}$ is compact. Since
$\left\Vert \mathcal{M}_{1}u_{n}\right\Vert _{2}\leq\left\Vert u_{n}%
\right\Vert _{H^{m}}\leq C$ and $u_{n}\rightarrow0$ weakly in $L^{2}$, we have
$\mathcal{M}_{1}u_{n}\rightarrow0$ weakly in $L^{2}$. So
\[
\left\Vert \left[  \mathcal{M}_{2},F\right]  \mathcal{M}_{1}u_{n}\right\Vert
_{2}\rightarrow0
\]
strongly in $L^{2}$ and thus $\left\Vert \left[  \mathcal{M},F\right]
u_{n}\right\Vert _{2\text{ }}\rightarrow0$. Since%
\begin{align*}
\left[  \mathcal{K}^{\lambda_{n}},F\right]  u_{n}  &  =-\frac{1}{c}\left[
\mathcal{E}^{\lambda_{n},-},F\right]  \left(  1+f^{\prime}\left(
u_{c}\right)  \right)  u_{n}\\
&  =-\frac{1}{c}\mathcal{E}^{\lambda_{n},-}F\left(  1+f^{\prime}\left(
u_{c}\right)  \right)  u_{n}+\frac{1}{c}F\mathcal{E}^{\lambda_{n},-}\left(
1+f^{\prime}\left(  u_{c}\right)  \right)  u_{n}=p_{n}+q_{n}.
\end{align*}
Denote $v_{n}=\left(  1+f^{\prime}\left(  u_{c}\right)  \right)  u_{n}$. From
the uniform bound of $\left\Vert u_{n}\right\Vert _{H^{m}}$, we get the
uniform bound for $\left\Vert v_{n}\right\Vert _{H^{m}}$. Therefore, by local
compactness,
\[
\left\Vert p_{n}\right\Vert _{2}\leq C\left\Vert Fv_{n}\right\Vert
_{2}\rightarrow0\text{, when }n\rightarrow\infty\text{. }%
\]
Since the operator $\left\Vert \mathcal{E}^{\lambda,-}\right\Vert
_{L^{2}\rightarrow L^{2}}\leq1$ and $\mathcal{E}^{\lambda,-}$ is commutable
with $\left(  1-\frac{d^{2}}{dx^{2}}\right)  ^{\frac{m}{2}}$, for any
$\lambda>0,\ $we have the estimate
\[
\left\Vert \mathcal{E}^{\lambda,-}\right\Vert _{H^{m}\rightarrow H^{m}}%
\leq1\text{. }%
\]
So denoting $\tilde{v}_{n}=\mathcal{E}^{\lambda_{n},-}v_{n},$we have the
uniform bound for$\ \left\Vert \tilde{v}_{n}\right\Vert _{H^{m}}$ and thus
\[
\left\Vert p_{n}\right\Vert _{2}\leq C\left\Vert F\tilde{v}_{n}\right\Vert
_{2}\rightarrow0\text{, when }n\rightarrow\infty\text{. }%
\]
This finishes the proof of (\ref{lim2}).
\end{proof}

\begin{lemma}
\label{lemma-quadratic bd-asymp}Let $z\in\mathbb{C}$ with $\operatorname{Re}%
z\leq\frac{1}{2}\left(  1-\frac{1}{c}\right)  $, then for some $n>0$ and all
$u\in C_{0}^{\infty}\left(  \left\vert x\right\vert \geq n\right)  $, we have
\begin{equation}
\left\Vert \left(  \mathcal{A}^{\lambda}-z\right)  u\right\Vert _{2}\geq
\frac{1}{4}\left(  1-\frac{1}{c}\right)  \left\Vert u\right\Vert _{2},
\label{resolvent-bound}%
\end{equation}
when $\lambda$ is sufficiently small.
\end{lemma}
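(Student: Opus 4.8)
The plan is to reduce the operator lower bound (\ref{resolvent-bound}) to a quadratic-form lower bound of exactly the type already obtained in Lemma \ref{lemma-quadrtic-bound-lb}, and then to recover the norm estimate by Cauchy--Schwarz. For $u\in C_{0}^{\infty}(|x|\geq n)$ with $u\neq0$ one has
\[
\left\Vert \left( \mathcal{A}^{\lambda}-z\right) u\right\Vert _{2}\left\Vert u\right\Vert _{2}\geq\left\vert \left( \left( \mathcal{A}^{\lambda}-z\right) u,u\right) \right\vert \geq\operatorname{Re}\left( \left( \mathcal{A}^{\lambda}-z\right) u,u\right) ,
\]
so it suffices to prove $\operatorname{Re}((\mathcal{A}^{\lambda}-z)u,u)\geq\frac{1}{4}(1-\frac{1}{c})\Vert u\Vert_{2}^{2}$ and then divide by $\Vert u\Vert_{2}$.

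For the quadratic form I would repeat the computation in the proof of Lemma \ref{lemma-quadrtic-bound-lb} verbatim, now for a single $u$ rather than a sequence. Writing $\mathcal{A}^{\lambda}=\mathcal{M}+1-\frac{1}{c}(1-\mathcal{E}^{\lambda,-})(1+f^{\prime}(u_{c}))$ and using that $\mathcal{D}=c\partial_{x}$ is skew-adjoint, so that $(1-\mathcal{E}^{\lambda,-})^{\ast}=1-\mathcal{E}^{\lambda,+}$, gives
\[
\operatorname{Re}\left( \left( \mathcal{A}^{\lambda}-z\right) u,u\right) =\left( \left( \mathcal{M}+1\right) u,u\right) -\operatorname{Re}z\,\Vert u\Vert_{2}^{2}-\frac{1}{c}\operatorname{Re}\left( \left( 1+f^{\prime}(u_{c})\right) u,\left( 1-\mathcal{E}^{\lambda,+}\right) u\right) .
\]
Since $\mathcal{M}\geq0$ we have $((\mathcal{M}+1)u,u)\geq\Vert u\Vert_{2}^{2}$; since $\operatorname{Re}z\leq\frac{1}{2}(1-\frac{1}{c})$ the second term is bounded below by $-\frac{1}{2}(1-\frac{1}{c})\Vert u\Vert_{2}^{2}$; and by the uniform bound $\Vert 1-\mathcal{E}^{\lambda,+}\Vert_{L^{2}\rightarrow L^{2}}\leq1$ from Lemma \ref{lemma-e-lb}(a) together with $|1+f^{\prime}(u_{c})|\leq1+\max_{|x|\geq n}|f^{\prime}(u_{c})|$ on the support of $u$, the last term is at most $\frac{1}{c}(1+\max_{|x|\geq n}|f^{\prime}(u_{c})|)\Vert u\Vert_{2}^{2}$ in absolute value. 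Collecting terms yields the lower bound $(\frac{1}{2}(1-\frac{1}{c})-\frac{1}{c}\max_{|x|\geq n}|f^{\prime}(u_{c})|)\Vert u\Vert_{2}^{2}$.

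The final step is to fix $n$. Because $u_{c}(x)\rightarrow0$ as $|x|\rightarrow\infty$ and $f^{\prime}(0)=0$, we have $\max_{|x|\geq n}|f^{\prime}(u_{c})|\rightarrow0$ as $n\rightarrow\infty$; choose $n$ so large that $\frac{1}{c}\max_{|x|\geq n}|f^{\prime}(u_{c})|\leq\frac{1}{4}(1-\frac{1}{c})$. Then $\operatorname{Re}((\mathcal{A}^{\lambda}-z)u,u)\geq\frac{1}{4}(1-\frac{1}{c})\Vert u\Vert_{2}^{2}$, and the Cauchy--Schwarz step finishes the argument.

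The point I want to emphasize, and what makes this essentially routine rather than obstacle-laden, is that the only $\lambda$-dependent quantity in the estimate is $\Vert(1-\mathcal{E}^{\lambda,+})u\Vert_{2}$, which is controlled by $\Vert u\Vert_{2}$ uniformly in $\lambda>0$ via Lemma \ref{lemma-e-lb}(a). Consequently the choice of $n$ is independent of $\lambda$, and the inequality in fact holds for every $\lambda>0$, not merely for $\lambda$ small; the restriction to small $\lambda$ in the statement reflects only the regime in which the estimate is consumed by the Vock--Hunziker asymptotic perturbation scheme. The one thing to be careful about is the adjoint identity $(1-\mathcal{E}^{\lambda,-})^{\ast}=1-\mathcal{E}^{\lambda,+}$, since it is exactly this move that transfers the non-self-adjoint factor onto the smooth multiplier $1+f^{\prime}(u_{c})$ and thereby lets one exploit the decay of $f^{\prime}(u_{c})$ away from the origin.
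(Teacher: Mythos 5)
Your proof is correct and follows essentially the same route as the paper, which likewise reduces (\ref{resolvent-bound}) to the quadratic-form bound (\ref{quadratic-bound-zero}) and obtains the latter by repeating the computation of Lemma \ref{lemma-quadrtic-bound-lb}; your Cauchy--Schwarz step and the adjoint identity $(1-\mathcal{E}^{\lambda,-})^{\ast}=1-\mathcal{E}^{\lambda,+}$ are exactly the (implicit) ingredients there. Your observation that the bound is in fact uniform in $\lambda>0$ is also accurate.
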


\begin{proof}
The estimate (\ref{resolvent-bound}) follows from
\begin{equation}
\operatorname{Re}\left(  \left(  \mathcal{A}^{\lambda}-z\right)  u,u\right)
\geq\frac{1}{4}\left(  1-\frac{1}{c}\right)  \left\Vert u\right\Vert _{2}^{2},
\label{quadratic-bound-zero}%
\end{equation}
which can be obtained as in the proof of Lemma \ref{lemma-quadrtic-bound-lb}.
\end{proof}

With above two lemmas, we can apply the asymptotic perturbation theory
(\cite{hislop-sig-book}, \cite{vock-hunz82}) to get the eigenvalue
perturbations of $\mathcal{A}^{0}$ to $\mathcal{A}^{\lambda}\mathcal{\ }$with
small $\lambda$.

\begin{proposition}
\label{Prop-asym-perturb}Each discrete eigenvalue $k_{0}$ of $\mathcal{A}^{0}$
with $k_{0}\leq\frac{1}{2}\left(  1-\frac{1}{c}\right)  $ is stable with
respect to the family $\mathcal{A}^{\lambda}$ in the following sense: there
exists $\lambda_{1},\delta>0$, such that for $0<\lambda<\lambda_{1}$, we have

(i)
\[
B\left(  k_{0};\delta\right)  =\left\{  z|\ 0<\left\vert z-k_{0}\right\vert
<\delta\right\}  \subset P\left(  \mathcal{A}^{\lambda}\right)  ,
\]
where
\[
P\left(  \mathcal{A}^{\lambda}\right)  =\left\{  z|\ R^{\lambda}\left(
z\right)  =\left(  \mathcal{A}^{\lambda}-z\right)  ^{-1}\text{ exists and is
uniformly bounded for }\lambda\in\left(  0,\lambda_{1}\right)  \right\}  .
\]

(ii) Denote
\[
P_{\lambda}=\oint_{\left\{  \left\vert z-k_{0}\right\vert =\delta\right\}
}R^{\lambda}\left(  z\right)  \ dz\text{ and \ }P_{0}=\oint_{\left\{
\left\vert z-k_{0}\right\vert =\delta\right\}  }R^{0}\left(  z\right)  \ dz
\]
to be the perturbed and unperturbed spectral projection. Then $\dim
P_{\lambda}=\dim P_{0}$ and $\lim_{\lambda\rightarrow0}\left\Vert P_{\lambda
}-P_{0}\right\Vert =0.$
\end{proposition}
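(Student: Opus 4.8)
The plan is to recognize (i) and (ii) as the two conclusions of the Vock--Hunziker asymptotic perturbation theorem (\cite{vock-hunz82}, \cite{hislop-sig-book}), whose hypotheses are exactly what Lemmas \ref{lemma-commu-F} and \ref{lemma-quadratic bd-asymp} were prepared to supply. First I would record that, since $f'(u_c)\to 0$ at infinity, $\sigma_{\mathrm{ess}}(\mathcal{L}_0)\subset[1-\tfrac1c,\infty)$, so any discrete eigenvalue $k_0\le\tfrac12(1-\tfrac1c)$ of the self-adjoint operator $\mathcal{A}^0=\mathcal{L}_0$ is real, isolated, and of finite multiplicity. Hence one may fix $\delta>0$ with $\overline{B(k_0;\delta)}\cap\sigma(\mathcal{L}_0)=\{k_0\}$ and $\operatorname{Re}z\le\tfrac12(1-\tfrac1c)$ for all $z\in\overline{B(k_0;\delta)}$.

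For part (i) I would argue by contradiction, using a localization dichotomy. Fix $z$ with $0<|z-k_0|\le\delta$ and suppose $R^{\lambda}(z)$ is not uniformly bounded as $\lambda\to0+$; then there exist $\lambda_n\to0+$ and $u_n$ with $\|u_n\|_2=1$ and $\|(\mathcal{A}^{\lambda_n}-z)u_n\|_2\to0$. The relation $\mathcal{A}^{\lambda_n}u_n=zu_n+(\mathcal{A}^{\lambda_n}-z)u_n$ gives the graph bound (\ref{bound-graph-0}), hence $\|u_n\|_{H^m}\le C$ and, along a subsequence, $u_n\rightharpoonup u_\infty$. If $u_\infty\ne0$, I test against $\phi\in C_0^\infty$ and pass to the limit using the strong convergence $\mathcal{A}^{\lambda_n}\to\mathcal{L}_0$ of Corollary \ref{Cor-A-lb-convergen} (and the analogous convergence of the adjoints, via Lemma \ref{lemma-e-lb}(b) applied to $\mathcal{E}^{\lambda,+}$), obtaining $(\mathcal{L}_0-z)u_\infty=0$; this forces $z\in\sigma(\mathcal{L}_0)$, contradicting the choice of $\delta$. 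If instead $u_\infty=0$, I insert the cutoff $\chi_d$ of Lemma \ref{lemma-commu-d}: Lemma \ref{lemma-commu-F} gives $\|\chi_d u_n\|_2\to0$ and $\|[\mathcal{A}^{\lambda_n},\chi_d]u_n\|_2\to0$, so $(\mathcal{A}^{\lambda_n}-z)(1-\chi_d)u_n\to0$; since $(1-\chi_d)u_n$ is supported in $\{|x|\ge R_0 d\}$, Lemma \ref{lemma-quadratic bd-asymp} (for $d$ large and $\lambda_n$ small) yields $\|(1-\chi_d)u_n\|_2\to0$, whence $\|u_n\|_2\to0$, contradicting the normalization. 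This proves $B(k_0;\delta)\subset P(\mathcal{A}^\lambda)$. Running the same argument with $z_n\to\hat z$ on the circle $\{|z-k_0|=\delta\}$, where $\hat z\ne k_0$ so only the first case can occur, gives the uniform bound of $R^\lambda$ on that contour, which is what is needed to define $P_\lambda$.

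For part (ii), once the contour bound is available the projections $P_\lambda$ are well defined, and their convergence to $P_0$ is the remaining conclusion of the asymptotic perturbation theorem: one shows that every weak limit of normalized functions in $\operatorname{ran}P_{\lambda_n}$ lies in $\operatorname{ran}P_0$ and, conversely, that $\operatorname{ran}P_0$ is captured in the limit, forcing $\dim P_\lambda=\dim P_0$ and $\|P_\lambda-P_0\|\to0$. Heuristically this can be traced through the identity
\[
R^{\lambda}(z)-R^{0}(z)=-\tfrac1c\,R^{\lambda}(z)\,\mathcal{E}^{\lambda,-}\,(1+f'(u_c))\,R^{0}(z),
\]
in which $f'(u_c)R^0(z):L^2\to L^2$ is compact because $f'(u_c)$ decays and $H^m\hookrightarrow L^2$ is locally compact, so that the product with $\mathcal{E}^{\lambda,-}\to0$ (strongly, Lemma \ref{lemma-e-lb}(b)) tends to zero in operator norm. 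I expect this last step to be the main obstacle: because $\mathcal{A}^\lambda\to\mathcal{L}_0$ only strongly and not in norm, ordinary analytic perturbation theory is unavailable, and the decay of the solitary wave $u_c$ must be used in an essential way—precisely as in Lemmas \ref{lemma-commu-F} and \ref{lemma-quadratic bd-asymp}—to recover the compactness that upgrades strong resolvent convergence to norm convergence of the spectral projections.
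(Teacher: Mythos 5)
Your proposal takes essentially the same route as the paper: the paper proves Proposition \ref{Prop-asym-perturb} by directly invoking the Vock--Hunziker asymptotic perturbation theory with Lemmas \ref{lemma-commu-F} and \ref{lemma-quadratic bd-asymp} supplying its hypotheses, which is exactly your plan (your localization dichotomy for (i) and the projection argument for (ii) are just the internal content of that cited theorem). One small caveat: your heuristic that $\mathcal{E}^{\lambda,-}\left(1+f^{\prime}\left(u_{c}\right)\right)R^{0}\left(z\right)\rightarrow0$ in operator norm fails for the non-compact summand $\mathcal{E}^{\lambda,-}R^{0}\left(z\right)$ (the resolvents do not converge in norm, which is precisely why the asymptotic theory is needed), but you flag this as heuristic and the rigorous path is the one you and the paper both take.
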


It follows from above that for $\lambda$ small, the operators $\mathcal{A}%
^{\lambda}$ have discrete eigenvalues inside $B\left(  k_{0};\delta\right)  $
with the total algebraic multiplicity equal to that of $k_{0}$.

\subsection{The Moving kernel formula and proof of instability\newline}

To understand the entire spectrum of $\mathcal{A}^{\lambda}$ for small
$\lambda$, we need to know precisely how the zero eigenvalue of $\mathcal{A}%
^{0}=\mathcal{L}^{0}$ is perturbed. For that, we derive a moving kernel
formula, from which the instability criterion follows. Let $\lambda_{1}%
,\delta>0$ be as in Proposition \ref{Prop-asym-perturb} for $k_{0}=0$. By our
assumption that $\ker\mathcal{A}^{0}=$ $\left\{  u_{cx}\right\}  $, so $\dim
P_{0}=1$ and thus $\dim P_{\lambda}=1$ for $\lambda<\lambda_{1}$. Since the
eigenvalues of $\mathcal{A}^{\lambda}$ appear in conjugate pairs, there is
only one real eigenvalue $k_{\lambda}\ $of $\mathcal{A}^{\lambda}$ inside
$B\left(  0;\delta\right)  $. The following lemma determines the sign of
$k_{\lambda},$ when $\lambda$ is sufficiently small.

\begin{lemma}
\label{lemma-moving}Assume $\ker\mathcal{L}^{0}=$ $\left\{  u_{cx}\right\}  $.
For $\lambda>0$ small enough, let $k_{\lambda}\in\mathbf{R}$ to be the only
eigenvalue of $\mathcal{A}^{\lambda}$ near origin. Then$\ $
\begin{equation}
\lim_{\lambda\rightarrow0+}\frac{k_{\lambda}}{\lambda^{2}}=-\frac{1}{c}%
\frac{dP}{dc}/\left\Vert u_{cx}\right\Vert _{L^{2}}^{2},
\label{formula-moving}%
\end{equation}
where the momentum
\begin{equation}
P\left(  c\right)  =\frac{1}{2}\left(  \left(  \mathcal{M+}1\right)
u_{c},u_{c}\right)  \label{P-BBM}%
\end{equation}

\end{lemma}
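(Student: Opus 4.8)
The plan is to track the perturbed real eigenvalue $k_{\lambda}$ of $\mathcal{A}^{\lambda}$ near the origin, together with its eigenfunction $u_{\lambda}$, normalized so that $u_{\lambda}\rightarrow u_{cx}$ in $L^{2}$ as $\lambda\rightarrow0+$; that $k_{\lambda}$ is simple and real and that this convergence holds follow from $\dim P_{\lambda}=\dim P_{0}=1$ and $\Vert P_{\lambda}-P_{0}\Vert\rightarrow0$ in Proposition \ref{Prop-asym-perturb}. Writing $\mathcal{A}^{\lambda}=\mathcal{L}_{0}+\frac{1}{c}\mathcal{E}^{\lambda,-}\left(1+f^{\prime}\left(u_{c}\right)\right)$ and testing $\mathcal{A}^{\lambda}u_{\lambda}=k_{\lambda}u_{\lambda}$ against the kernel vector $u_{cx}$, the self-adjointness of $\mathcal{L}_{0}$ together with $\mathcal{L}_{0}u_{cx}=0$ annihilates the leading operator and, using $\left(\mathcal{E}^{\lambda,-}\right)^{\ast}=\mathcal{E}^{\lambda,+}$, leaves
\[
k_{\lambda}\left(u_{\lambda},u_{cx}\right)=\frac{1}{c}\left(\left(1+f^{\prime}\left(u_{c}\right)\right)u_{\lambda},\mathcal{E}^{\lambda,+}u_{cx}\right).
\]
Since $\left(u_{\lambda},u_{cx}\right)\rightarrow\Vert u_{cx}\Vert_{L^{2}}^{2}\neq0$, the entire statement reduces to extracting the $\lambda^{2}$ coefficient of the right-hand side.

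The crux is an exact evaluation of $w_{\lambda}:=\mathcal{E}^{\lambda,+}u_{cx}$. Rather than expand $\mathcal{E}^{\lambda,+}$ in the unbounded symbol $\partial_{x}^{-1}$, I would take the decaying solution of $\left(\lambda+c\partial_{x}\right)w_{\lambda}=\lambda u_{cx}$ and integrate by parts once to obtain the exact identity $w_{\lambda}=\frac{\lambda}{c}u_{c}-\frac{\lambda^{2}}{c^{2}}U_{\lambda}$, where $U_{\lambda}\left(x\right)=e^{-\lambda x/c}\int_{-\infty}^{x}e^{\lambda y/c}u_{c}\left(y\right)dy\rightarrow U\left(x\right):=\int_{-\infty}^{x}u_{c}\left(y\right)dy$. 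The delicate point — and the main obstacle — is that $U$ does not decay, since $U\left(x\right)\rightarrow\int_{\mathbf{R}}u_{c}\neq0$; this non-decaying antiderivative survives at order $\lambda^{2}$ and can only be controlled by pairing it against the fast-decaying factor $\left(1+f^{\prime}\left(u_{c}\right)\right)u_{cx}=\partial_{x}\left(u_{c}+f\left(u_{c}\right)\right)$. This is precisely why the weak convergence $\mathcal{A}^{\lambda}\rightarrow\mathcal{L}_{0}$ is too crude here and a naive Neumann expansion in $\partial_{x}^{-1}$ is invalid.

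Feeding $w_{\lambda}$ back, the $O\left(\lambda\right)$ term $\frac{\lambda}{c^{2}}\left(\left(1+f^{\prime}\left(u_{c}\right)\right)u_{cx},u_{c}\right)$ vanishes because $\left(1+f^{\prime}\left(u_{c}\right)\right)u_{cx}$ is a perfect derivative and $u_{c}\rightarrow0$, re-confirming $k_{\lambda}=O\left(\lambda^{2}\right)$. Two contributions remain at order $\lambda^{2}$. The tail term $-\frac{\lambda^{2}}{c^{3}}\left(\left(1+f^{\prime}\left(u_{c}\right)\right)u_{cx},U\right)$ equals $\frac{2\lambda^{2}}{c^{2}}P\left(c\right)$ after one integration by parts using $U^{\prime}=u_{c}$ and $\int u_{c}\left(u_{c}+f\left(u_{c}\right)\right)dx=2cP$. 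The remaining cross term requires the first-order shape of the eigenfunction: setting $\phi:=\lim_{\lambda\rightarrow0}\left(u_{\lambda}-u_{cx}\right)/\lambda$, dividing the eigenvalue equation by $\lambda$ and letting $\lambda\rightarrow0$ (so that $\frac{1}{\lambda}\mathcal{E}^{\lambda,-}\left(1+f^{\prime}\left(u_{c}\right)\right)u_{\lambda}\rightarrow-\frac{1}{c}\partial_{x}^{-1}\left(1+f^{\prime}\left(u_{c}\right)\right)u_{cx}=-\frac{1}{c}\left(u_{c}+f\left(u_{c}\right)\right)$ by the same exact computation) gives $\mathcal{L}_{0}\phi=\frac{1}{c}\left(\mathcal{M}+1\right)u_{c}$. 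Differentiating the solitary-wave equation (\ref{solitary-bbm}) in $c$ yields $\mathcal{L}_{0}\partial_{c}u_{c}=-\frac{1}{c}\left(\mathcal{M}+1\right)u_{c}$, whence $\phi=-\partial_{c}u_{c}$ modulo $\ker\mathcal{L}_{0}=\left\{u_{cx}\right\}$; this contributes $-\frac{\lambda^{2}}{c^{2}}\left(\left(1+f^{\prime}\left(u_{c}\right)\right)\partial_{c}u_{c},u_{c}\right)$.

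Collecting the two contributions gives $\lim_{\lambda\rightarrow0+}k_{\lambda}/\lambda^{2}=\frac{1}{c^{2}\Vert u_{cx}\Vert_{L^{2}}^{2}}\left(2P-\left(\left(1+f^{\prime}\left(u_{c}\right)\right)\partial_{c}u_{c},u_{c}\right)\right)$, so it remains only to prove the identity $\left(\left(1+f^{\prime}\left(u_{c}\right)\right)\partial_{c}u_{c},u_{c}\right)-2P=cP^{\prime}\left(c\right)$. I would close this with the variational structure of the problem: $u_{c}$ solves $\delta\left(V-cP\right)=0$ with $V\left(u\right)=\int\left(\frac{1}{2}u^{2}+F\left(u\right)\right)dx$, $F^{\prime}=f$, so that $\frac{d}{dc}V\left(u_{c}\right)=\left(u_{c}+f\left(u_{c}\right),\partial_{c}u_{c}\right)=cP^{\prime}\left(c\right)$, the last step using $c\left(\mathcal{M}+1\right)u_{c}=u_{c}+f\left(u_{c}\right)$. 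Since $R\left(c\right):=\int\left(u_{c}f\left(u_{c}\right)-2F\left(u_{c}\right)\right)dx=2cP-2V\left(u_{c}\right)$, one gets $R^{\prime}\left(c\right)=2P$; and as $f^{\prime}\left(u_{c}\right)u_{c}-f\left(u_{c}\right)=\frac{d}{du}\left(uf\left(u\right)-2F\left(u\right)\right)\big|_{u_{c}}$, this reads $\int\left(f^{\prime}\left(u_{c}\right)u_{c}-f\left(u_{c}\right)\right)\partial_{c}u_{c}\,dx=2P$, which is equivalent to the required identity. Substituting yields $\lim_{\lambda\rightarrow0+}k_{\lambda}/\lambda^{2}=-\frac{1}{c}P^{\prime}\left(c\right)/\Vert u_{cx}\Vert_{L^{2}}^{2}$, which is (\ref{formula-moving}).
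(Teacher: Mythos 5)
Your algebra is correct and reaches the right answer, and two ingredients are genuinely different from the paper's route: (i) you evaluate $\mathcal{E}^{\lambda,+}u_{cx}$ exactly as $\frac{\lambda}{c}u_{c}-\frac{\lambda^{2}}{c^{2}}U_{\lambda}$ with $U_{\lambda}$ an exponentially weighted antiderivative of $u_{c}$, rather than manipulating $\mathcal{E}^{\lambda,-}-1$ acting on $\left(\mathcal{M}+1\right)u_{c}$ as the paper does (the two are equivalent, but your version makes the role of the non-decaying antiderivative explicit); and (ii) you close the final identity $\left(\left(1+f^{\prime}\left(u_{c}\right)\right)\partial_{c}u_{c},u_{c}\right)-2P=cP^{\prime}$ through the variational structure $\delta\left(cP-V\right)=0$, whereas the paper simply pairs the $c$-derivative of (\ref{solitary-bbm}) with $u_{c}$. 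Both of these are fine and arguably illuminating.

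The genuine gap is that you never establish the existence of $\phi=\lim_{\lambda\rightarrow0+}\left(u_{\lambda}-u_{cx}\right)/\lambda$, nor the quantitative bound $\left\Vert u_{\lambda}-u_{cx}\right\Vert _{L^{2}}=O\left(\lambda\right)$ that your limit passages require. Proposition \ref{Prop-asym-perturb} only gives $\left\Vert P_{\lambda}-P_{0}\right\Vert \rightarrow0$, i.e.\ continuity of the eigenbranch; since $\mathcal{A}^{\lambda}\rightarrow\mathcal{L}_{0}$ only strongly and not in norm, no off-the-shelf perturbation theory upgrades this to differentiability at $\lambda=0$, and this is precisely where the paper spends most of its effort: it writes $u_{\lambda}=c_{\lambda}u_{cx}+\lambda v_{\lambda}$ with $\left(v_{\lambda},u_{cx}\right)=0$ and proves $\left\Vert v_{\lambda}\right\Vert _{L_{e}^{2}}\leq C$ by a contradiction argument (renormalize a putative unbounded sequence, use the a priori estimate of Lemma \ref{lemma-apriori} to extract a nonzero weak limit in $\ker\mathcal{L}_{0}$ orthogonal to $u_{cx}$), and only then identifies $v_{0}=-\partial_{c}u_{c}$ mod $u_{cx}$. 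Without this step two of your limits are unjustified: the cross term $\frac{\lambda}{c^{2}}\left(\left(1+f^{\prime}\left(u_{c}\right)\right)u_{\lambda},u_{c}\right)$ cannot be expanded to order $\lambda^{2}$, and the tail term $\frac{\lambda^{2}}{c^{3}}\left(\left(1+f^{\prime}\left(u_{c}\right)\right)u_{\lambda},U_{\lambda}\right)$ is delicate because $\left\Vert U_{\lambda}\right\Vert _{L^{2}}\sim\lambda^{-1/2}$ diverges (when $\int u_{c}\neq0$), so replacing $u_{\lambda}$ by $u_{cx}$ there costs $\left\Vert u_{\lambda}-u_{cx}\right\Vert _{L^{2}}\cdot\lambda^{-1/2}$, which is $o\left(1\right)$ only once the $O\left(\lambda\right)$ rate is known. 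You correctly identify the non-decaying antiderivative as "the delicate point," but the decisive difficulty is the compactness/a priori estimate for the corrector, not the pairing itself. Supplying the uniform bound on $\left(u_{\lambda}-c_{\lambda}u_{cx}\right)/\lambda$ (in the weighted norm, then in $H^{m/2}$) would complete your proof.
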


By the same proof of (\ref{claim-bound-infy}), we get the following a priori
estimate which is used in the later proof.

\begin{lemma}
\label{lemma-apriori}For $\lambda>0$ small enough, consider $u\in H^{m}\left(
\mathbf{R}\right)  $ satisfying the equation $\left(  \mathcal{A}^{\lambda
}-z\right)  u=v$, where $z\in\mathbb{C}$ with $\operatorname{Re}z\leq\frac
{1}{2}\left(  1-\frac{1}{c}\right)  $ and $v\in L^{2}$. Then we have the
estimate
\begin{equation}
\left\Vert u\right\Vert _{H^{\frac{m}{2}}}\leq C\left(  \left\Vert
u\right\Vert _{L_{e}^{2}}+\left\Vert v\right\Vert _{L^{2}}\right)  ,
\label{apriori-small-lb}%
\end{equation}
for some constant $C$ independent of $\lambda$. Here, the weighted norm
$\left\Vert \cdot\right\Vert _{L_{e}^{2}}$ is defined in (\ref{defn-L2-e}).
\end{lemma}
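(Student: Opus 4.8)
The plan is to mimic the energy estimate that established (\ref{claim-bound-infy}) in the proof of Lemma \ref{lemma-no-eigen-infy}, now carrying along the inhomogeneous term $v$ and allowing the spectral parameter $z$ to sit anywhere in the half-plane $\operatorname{Re}z\le\frac12(1-\frac1c)$. First I would pair the equation $(\mathcal{A}^{\lambda}-z)u=v$ with $u$ in $L^{2}$ and take real parts, writing $\mathcal{A}^{\lambda}=\mathcal{M}+1-\frac{1}{c}(1-\mathcal{E}^{\lambda,-})(1+f'(u_{c}))$ and, exactly as in Lemma \ref{lemma-quadrtic-bound-lb}, transferring $1-\mathcal{E}^{\lambda,-}$ onto its adjoint $1-\mathcal{E}^{\lambda,+}$. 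Since $((\mathcal{M}+1)u,u)$ is real, this produces the identity
\[
((\mathcal{M}+1)u,u)-\operatorname{Re}z\,\|u\|_{2}^{2}=\operatorname{Re}(v,u)+\frac{1}{c}\operatorname{Re}\left((1+f'(u_{c}))u,(1-\mathcal{E}^{\lambda,+})u\right),
\]
which is the inhomogeneous analogue of the relation $0\ge\operatorname{Re}k_{n}\|u_{n}\|^{2}=\cdots$ used before; the differences are that the left side is no longer automatically nonnegative and that a source term $\operatorname{Re}(v,u)$ now appears.

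Next I would bound the left side from below. As $\mathcal{M}+1$ is self-adjoint and nonnegative, $((\mathcal{M}+1)u,u)=\|u\|_{2}^{2}+\int\alpha(k)|\hat u(k)|^{2}\,dk$, and the symbol hypothesis $\alpha(k)\ge a|k|^{m}$ for $|k|\ge K$ recovers, up to a multiple of $\|u\|_{2}^{2}$, the full norm $\|u\|_{H^{m/2}}^{2}$, just as in the display chain leading to (\ref{claim-bound-infy}) (one writes $\int|k|^{m}|\hat u|^{2}\le K^{m}\|u\|_{2}^{2}+\int_{|k|\ge K}|k|^{m}|\hat u|^{2}$). The only new feature is the term $-\operatorname{Re}z\,\|u\|_{2}^{2}$: under $\operatorname{Re}z\le\frac12(1-\frac1c)$ it costs at worst $-\frac12(1-\frac1c)\|u\|_{2}^{2}$, leaving the coefficient $1-\frac12(1-\frac1c)=\frac12(1+\frac1c)$ on $\|u\|_{2}^{2}$.

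For the right side I would invoke Lemma \ref{lemma-e-lb}(a), i.e. $\|(1-\mathcal{E}^{\lambda,+})u\|_{2}\le\|u\|_{2}$ uniformly in $\lambda>0$, and split $1+f'(u_{c})$ to obtain
\[
\frac{1}{c}\left|\operatorname{Re}\left((1+f'(u_{c}))u,(1-\mathcal{E}^{\lambda,+})u\right)\right|\le\frac{1}{c}\|u\|_{2}^{2}+\frac{1}{c}\|u\|_{L_{e}^{2}}\|u\|_{2},
\]
using $\|f'(u_{c})u\|_{2}=\|u\|_{L_{e}^{2}}$ with $e=(f'(u_{c}))^{2}$. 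Young's inequality turns $\frac1c\|u\|_{L_{e}^{2}}\|u\|_{2}$ into $\varepsilon\|u\|_{2}^{2}+\frac{1}{4\varepsilon c^{2}}\|u\|_{L_{e}^{2}}^{2}$ and $\operatorname{Re}(v,u)$ into $\eta\|u\|_{2}^{2}+\frac{1}{4\eta}\|v\|_{2}^{2}$. The decisive bookkeeping is that the net coefficient of $\|u\|_{2}^{2}$ equals $\frac12(1+\frac1c)-\frac1c-\varepsilon-\eta=\frac12(1-\frac1c)-\varepsilon-\eta$, which stays strictly positive for $c>1$ once $\varepsilon,\eta$ are small. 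Absorbing the $\|u\|_{2}^{2}$ and $\int\alpha|\hat u|^{2}$ contributions then yields $\|u\|_{H^{m/2}}^{2}\le C(\|u\|_{L_{e}^{2}}^{2}+\|v\|_{2}^{2})$, and taking square roots gives (\ref{apriori-small-lb}).

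The point I would watch most carefully is the $\lambda$-independence of $C$: every occurrence of $\lambda$ enters only through $\mathcal{E}^{\lambda,+}$, and the bound $\|1-\mathcal{E}^{\lambda,+}\|_{L^{2}\to L^{2}}\le1$ of Lemma \ref{lemma-e-lb}(a) holds uniformly for all $\lambda>0$, so no constant in the chain depends on $\lambda$. The main (if modest) obstacle is therefore purely the sign bookkeeping described above — verifying that absorbing both the spectral shift $\operatorname{Re}z$ at its maximal admissible value and the leading $\frac1c\|u\|_{2}^{2}$ term still leaves the positive margin $\frac12(1-\frac1c)$, which is exactly what $c>1$ guarantees.
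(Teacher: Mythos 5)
Your proposal is correct and is essentially the paper's own argument: the paper proves this lemma simply by saying ``by the same proof of (\ref{claim-bound-infy})'', and your computation is exactly that adaptation --- pairing $(\mathcal{A}^{\lambda}-z)u=v$ with $u$, passing $1-\mathcal{E}^{\lambda,-}$ to its adjoint $1-\mathcal{E}^{\lambda,+}$, using the uniform bound of Lemma \ref{lemma-e-lb}(a), and checking that the worst-case shift $\operatorname{Re}z=\frac12(1-\frac1c)$ still leaves the positive margin $\frac12(1-\frac1c)$ on $\Vert u\Vert_{2}^{2}$ after Young's inequality. The bookkeeping, including the $\lambda$-independence of the constant, is sound.
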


Assuming Lemma \ref{lemma-moving}, we prove Theorem \ref{THM: main} for BBM
type equations.

\begin{proof}
[Proof of Theorem \ref{spectrum-KDV} (BBM)]We prove (ii) and the proof of (i)
is similar. Assume that $n^{-}\left(  \mathcal{L}_{0}\right)  $ is odd and
$dP/dc<0$. Let $k_{1}^{-},\cdots,k_{l}^{-}$ be all the distinct negative
eigenvalues of $\mathcal{L}_{0}$. Choose $\delta>0$ small such that the
$l\ $disks $B\left(  k_{i}^{-};\delta\right)  $ are disjoint and still lie in
the left half plane. By Proposition \ref{Prop-asym-perturb}, there
exists$\ \lambda_{1}>0$ and $\delta$ small enough, such that for
$0<\lambda<\lambda_{1}$, $\mathcal{A}^{\lambda}$ has $n^{-}\left(
\mathcal{L}_{0}\right)  \ $eigenvalues (counting multiplicity) in $\cup
_{i=1}^{l}B\left(  k_{i}^{-};\delta\right)  .$ By Lemma \ref{lemma-moving}, if
$dP/dc<0$, then the zero eigenvalue of $\mathcal{A}^{0}$ is perturbed to a
positive eigenvalue $0<k_{\lambda}<\delta$ of $\mathcal{A}^{\lambda}$ for
small $\lambda$. Consider the region
\[
\Omega=\left\{  z\ |\ 0>\operatorname{Re}z>-2M\text{ and }\left\vert
\operatorname{Im}z\right\vert <2M\right\}  ,
\]
where $M$ is the uniform bound for $\left\Vert \mathcal{K}^{\lambda
}\right\Vert =\left\Vert \mathcal{A}^{\lambda}-\mathcal{M-}1\right\Vert $. We
claim that: for $\lambda$ small enough, $\mathcal{A}^{\lambda}$ has exactly
$n^{-}\left(  \mathcal{L}_{0}\right)  +1$ eigenvalues (with multiplicity) in
\[
\Omega_{\delta}=\left\{  z\ |\ 2\delta>\operatorname{Re}z>-2M\text{ and
}\left\vert \operatorname{Im}z\right\vert <2M\right\}  .
\]
That is, all eigenvalues of $\mathcal{A}^{\lambda}$ with real parts no greater
than $2\delta$ lie$\ $in $\cup_{i=1}^{l}B\left(  k_{i}^{-};\delta\right)  \cup
B\left(  0;\delta\right)  $. Suppose otherwise, there exists a sequence
$\lambda_{n}\rightarrow0$ and
\[
\left\{  u_{n}\right\}  \in H^{m}\left(  \mathbf{R}\right)  ,\ \ \ z_{n}%
\in\Omega/\left(  \cup_{i=1}^{l}B\left(  k_{i}^{-};\delta\right)  \cup
B\left(  0;\delta\right)  \right)
\]
such that $\left(  \mathcal{A}^{\lambda_{n}}-z_{n}\right)  u_{n}=0$. We
normalize $u_{n}$ by setting $\left\Vert u_{n}\right\Vert _{L_{e}^{2}}=1$.
Then by Lemma \ref{lemma-apriori}, we have $\left\Vert u_{n}\right\Vert
_{H^{\frac{m}{2}}}\leq C$. By the same argument as in the proof of Lemma
\ref{lemma-no-eigen-infy}, $u_{n}\rightarrow u_{\infty}\neq0$ weakly in
$H^{\frac{m}{2}}$. Let
\[
\lim_{n\rightarrow\infty}z_{n}=z_{\infty}\in\bar{\Omega}/\left(  \cup
_{i=1}^{l}B\left(  k_{i}^{-};\delta\right)  \cup B\left(  0;\delta\right)
\right)
\]
then $\mathcal{L}^{0}u_{\infty}=z_{\infty}u_{\infty}$, which is a
contradiction. The claim is proved and thus for $\lambda$ small enough,
$\mathcal{A}^{\lambda}$ has exactly $n^{-}\left(  \mathcal{L}_{0}\right)  $
eigenvalues in $\Omega$.

Suppose Theorem \ref{spectrum-KDV} (i) is not true, then $\mathcal{A}%
^{\lambda}$ has no kernel for any $\lambda>0$. Define $n_{\Omega}\left(
\lambda\right)  $ to be the number of eigenvalues (with multiplicity) of
$\mathcal{A}^{\lambda}\ $in $\Omega$. Since by \ref{bound-essential}, the
region $\Omega\ $is away from the essential spectrum of $\mathcal{A}^{\lambda
}$, $n_{\Omega}\left(  \lambda\right)  $ is always a finite integer. In the
above, we have proved that $n_{\Omega}\left(  \lambda\right)  =n^{-}\left(
\mathcal{L}_{0}\right)  $ is odd, for $\lambda$ small enough. By Lemma
\ref{lemma-no-eigen-infy}, there exists $\Lambda>0$ such that $n_{\Omega
}\left(  \lambda\right)  =0$ for $\lambda>\Lambda$. Define two sets
\[
S_{\text{odd}}=\left\{  \lambda>0|\text{ }n_{\Omega}\left(  \lambda\right)
\text{ is odd}\right\}  ,\ S_{\text{even}}=\left\{  \lambda>0|\text{
}n_{\Omega}\left(  \lambda\right)  \text{ is even}\right\}  .
\]
Then both sets are non-empty. Below, we show that both $S_{\text{odd}}$ and
$S_{\text{even}}$ are open sets. Let $\lambda_{0}\in$ $S_{\text{odd}}$ and
denote $k_{1},\cdots,k_{l}$ $\left(  l\leq n_{\Omega}\left(  \lambda
_{0}\right)  \right)  \ $to be all distinct eigenvalues of $\mathcal{A}%
^{\lambda_{0}}$ in $\Omega$. Denote $ih_{1},\cdots,ih_{p}$ to be all
eigenvalues of $\mathcal{A}^{\lambda_{0}}$ on the imaginary axis. Then
$\left\vert h_{j}\right\vert \leq M$, $1\leq j\leq p$. Choose $\delta>0$
sufficiently small such that the disks $B\left(  k_{i};\delta\right)  $
$\left(  1\leq i\leq l\right)  $ and $B\left(  ih_{j};\delta\right)  \ \left(
1\leq j\leq p\right)  $ are disjoint, $B\left(  k_{i};\delta\right)
\subset\Omega$ and $B\left(  ih_{j};\delta\right)  $ does not contain $0$.
Note that $\mathcal{A}^{\lambda}$ is analytic in $\lambda$ for $\lambda
\in\left(  0,+\infty\right)  $. By the analytic perturbation theory
(\cite{hislop-sig-book}), if $\left\vert \lambda-\lambda_{0}\right\vert $ is
sufficiently small, any eigenvalue of $\mathcal{A}^{\lambda}$ in
$\Omega_{\delta}$ lies in one of the disks $B\left(  k_{i};\delta\right)  $ or
$B\left(  ih_{j};\delta\right)  $. So $n_{\Omega}\left(  \lambda\right)  $ is
the number $n_{\Omega}\left(  \lambda_{0}\right)  $ plus the number of
eigenvalues in $\cup_{i=1}^{p}B\left(  ih_{j};\delta\right)  $ with the
negative real part. The second number is even, since the complex eigenvalues
of $\mathcal{A}^{\lambda}$ appears in conjugate pairs. Thus, $n_{\Omega
}\left(  \lambda\right)  $ is odd for $\left\vert \lambda-\lambda
_{0}\right\vert $ small enough. This shows that $S_{\text{odd}}$ is open.
Similarly, $S_{\text{even}}$ is open$.$ Thus, $\left(  0,+\infty\right)  $ is
the union of two non-empty, disjoint open sets $S_{\text{odd}}$ and
$S_{\text{even}}$. This is a contradiction.

So there exists $\lambda>0$ and $0\neq u\in$ $H^{m}\left(  \mathbf{R}\right)
$ such that $\mathcal{A}^{\lambda}u=0$. Then $e^{\lambda t}u\left(  x\right)
$ is purely growing mode solution to (\ref{L-bbm}). One could also get more
regularity of $u\left(  x\right)  $, as in the usual proof of the regularity
of solitary waves (i.e. \cite{benjamin90}).
\end{proof}

It remains to prove the moving kernel formula (\ref{formula-moving}).

\begin{proof}
[Proof of Lemma \ref{lemma-moving}]We use $C$ to denote a generic constant in
our estimates below. As described at the beginning of this subsection, for
$\lambda>0$ small enough, there exists $u_{\lambda}\in H^{m}\left(
\mathbf{R}\right)  $, such that $\left(  \mathcal{A}^{\lambda}-k_{\lambda
}\right)  u_{\lambda}=0$ with $k_{\lambda}\in\mathbf{R}$ and $\lim
_{\lambda\rightarrow0+}k_{\lambda}=0$. We normalize $u_{\lambda}$ by setting
$\left\Vert u_{\lambda}\right\Vert _{L_{e}^{2}}=1$. Then by Lemma
\ref{lemma-apriori}, we have $\left\Vert u_{\lambda}\right\Vert _{H^{\frac
{m}{2}}}\leq C$ and as in the proof of Lemma \ref{lemma-no-eigen-infy},
$u_{\lambda}\rightarrow u_{0}\neq0$ weakly in $H^{\frac{m}{2}}$. Since
$\mathcal{A}^{0}u_{0}=\mathcal{L}_{0}u_{0}=0$ and $\ker\mathcal{L}_{0}=$
$\left\{  u_{cx}\right\}  $, we have $u_{0}=c_{0}u_{cx}$ for some $c_{0}\neq
0$. Moreover, we have $\left\Vert u_{\lambda}-u_{0}\right\Vert _{H^{\frac
{m}{2}}}=0$. To show that, first we note that $\left\Vert u_{\lambda}%
-u_{0}\right\Vert _{L_{e}^{2}}\rightarrow0$, since
\[
\left\Vert u_{\lambda}-u_{0}\right\Vert _{L_{e}^{2}}^{2}\leq\int_{\left\vert
x\right\vert \leq R}e\left(  x\right)  \left\vert u_{\lambda}-u_{0}\right\vert
^{2}\ dx+\max_{\left\vert x\right\vert \geq R}e\left(  x\right)  \left\Vert
u_{\lambda}-u_{0}\right\Vert _{L^{2}}^{2},
\]
and the second term is arbitrarily small for large $R$ while the first term
tends to zero by the local compactness. Since
\[
\left(  \mathcal{A}^{\lambda}-k_{\lambda}\right)  \left(  u_{\lambda}%
-u_{0}\right)  =k_{\lambda}u_{0}+\left(  \mathcal{A}^{0}-\mathcal{A}^{\lambda
}\right)  u_{0},
\]
by Lemma \ref{lemma-apriori} we have
\[
\left\Vert u_{\lambda}-u_{0}\right\Vert _{H^{\frac{m}{2}}}\leq C\left(
\left\Vert u_{\lambda}-u_{0}\right\Vert _{L_{e}^{2}}^{2}+\left\vert
k_{\lambda}\right\vert \left\Vert u_{0}\right\Vert _{L^{2}}^{2}+\left\Vert
\left(  \mathcal{A}^{0}-\mathcal{A}^{\lambda}\right)  u_{0}\right\Vert
_{L^{2}}^{2}\right)  \rightarrow0,
\]
when $\lambda\rightarrow0+$. We can assume $c_{0}=1$ by renormalizing the sequence.

Next, we show that $\lim_{\lambda\rightarrow0+}\frac{k_{\lambda}}{\lambda}=0$.
From $\left(  \mathcal{A}^{\lambda}-k_{\lambda}\right)  u_{\lambda}=0,$ we
have
\begin{equation}
\frac{k_{\lambda}}{\lambda}u_{\lambda}=\mathcal{A}^{0}\frac{u_{\lambda}%
}{\lambda}+\frac{\mathcal{A}^{\lambda}-\mathcal{A}^{0}}{\lambda}u_{\lambda}.
\label{eqn-1st}%
\end{equation}
Taking the inner product of above with $u_{cx}$, we get
\[
\frac{k_{\lambda}}{\lambda}\left(  u_{\lambda},u_{cx}\right)  =\left(
\frac{\mathcal{A}^{\lambda}-\mathcal{A}^{0}}{\lambda}u_{\lambda}%
,u_{cx}\right)  :=m\left(  \lambda\right)  .
\]
We have
\begin{align*}
m\left(  \lambda\right)   &  =\left(  \frac{1}{c}\frac{1}{\lambda-\mathcal{D}%
}\left(  1+f^{\prime}\left(  u_{c}\right)  \right)  u_{\lambda},u_{cx}\right)
=\frac{1}{c}\left(  \left(  1+f^{\prime}\left(  u_{c}\right)  \right)
u_{\lambda},\frac{1}{\lambda+\mathcal{D}}u_{cx}\right) \\
&  =\frac{1}{c^{2}}\left(  \left(  1+f^{\prime}\left(  u_{c}\right)  \right)
u_{\lambda},\left(  1-\mathcal{E}^{\lambda,+}\right)  u_{c}\right)  \text{
}\rightarrow\frac{1}{c^{2}}\left(  \left(  1+f^{\prime}\left(  u_{c}\right)
\right)  u_{cx},u_{c}\right) \\
&  =\frac{1}{c^{2}}\int\frac{d}{dx}\left(  \frac{1}{2}u_{c}^{2}+F\left(
u_{c}\right)  \right)  \ dx=0,
\end{align*}
where $F\left(  u\right)  =\int_{0}^{u}f^{\prime}\left(  s\right)  sds$ and in
the above $\lim_{\lambda\rightarrow0+}\mathcal{E}^{\lambda,+}=0$ is used. So
\[
\lim_{\lambda\rightarrow0+}\frac{k_{\lambda}}{\lambda}=\lim_{\lambda
\rightarrow0+}\frac{m\left(  \lambda\right)  }{\left(  u_{\lambda}%
,u_{cx}\right)  }=0.
\]
We write $u_{\lambda}=c_{\lambda}u_{cx}+\lambda v_{\lambda}$, where
$c_{\lambda}=\left(  u_{\lambda},u_{cx}\right)  /\left(  u_{cx},u_{cx}\right)
$. Then $\left(  v_{\lambda},u_{cx}\right)  =0$ and $c_{\lambda}\rightarrow1$
when $\lambda\rightarrow0+$. We claim that: $\left\Vert v_{\lambda}\right\Vert
_{L_{e}^{2}}\leq C$ (independent of $\lambda$). Suppose otherwise, there
exists a sequence $\lambda_{n}\rightarrow0+$ such that $\left\Vert
v_{\lambda_{n}}\right\Vert _{L_{e}^{2}}\geq n$. Denote $\tilde{v}_{\lambda
_{n}}=v_{\lambda_{n}}/\left\Vert v_{\lambda_{n}}\right\Vert _{L_{e}^{2}}$.
Then $\left\Vert \tilde{v}_{\lambda_{n}}\right\Vert _{L_{e}^{2}}=1$ and
$\tilde{v}_{\lambda_{n}}$ satisfies the equation
\begin{equation}
\mathcal{A}^{\lambda_{n}}\tilde{v}_{\lambda_{n}}=\frac{1}{\left\Vert \tilde
{v}_{\lambda_{n}}\right\Vert _{L_{e}^{2}}}\left(  \frac{k_{\lambda_{n}}%
}{\lambda_{n}}u_{\lambda_{n}}-c_{\lambda_{n}}\frac{\mathcal{A}^{\lambda_{n}%
}-\mathcal{A}^{0}}{\lambda_{n}}u_{cx}\right)  . \label{eqn-vn-tilt}%
\end{equation}
Denote%
\[
w_{\lambda}\left(  x\right)  =\frac{\mathcal{A}^{\lambda}-\mathcal{A}^{0}%
}{\lambda}u_{cx},
\]
then
\begin{align*}
w_{\lambda}\left(  x\right)   &  =\frac{1}{c}\frac{1}{\lambda-\mathcal{D}%
}\left(  1+f^{\prime}\left(  u_{c}\right)  \right)  u_{cx}=\frac{1}%
{\lambda-\mathcal{D}}\frac{d}{dx}\left(  \left(  \mathcal{M}+1\right)
u_{c}\right) \\
&  =\frac{1}{c}\frac{\mathcal{D}}{\lambda-\mathcal{D}}\left(  \mathcal{M}%
+1\right)  u_{c}=\frac{1}{c}\left(  \mathcal{E}^{\lambda,-}-1\right)  \left(
\mathcal{M}+1\right)  u_{c},
\end{align*}
where we use the equation
\[
\mathcal{L}_{0}u_{cx}=\mathcal{M}u_{cx}+\left(  1-\frac{1}{c}\right)
u_{cx}-\frac{1}{c}f^{\prime}\left(  u_{c}\right)  u_{cx}=0.
\]
By Lemma \ref{lemma-e-lb}, $\left\Vert w_{\lambda}\right\Vert _{L^{2}}\leq C$
(independent of $\lambda$), and
\begin{equation}
w_{\lambda}\left(  x\right)  \rightarrow-\frac{1}{c}\left(  \mathcal{M}%
+1\right)  u_{c}=\frac{1}{c^{2}}\left(  u_{c}+f\left(  u_{c}\right)  \right)
\label{limit-w-lb}%
\end{equation}
strongly in $L^{2},\ $when $\lambda\rightarrow0+$. So by Lemma
\ref{lemma-apriori}, we have $\left\Vert \tilde{v}_{\lambda_{n}}\right\Vert
_{H^{\frac{m}{2}}}\leq C$. Then, as before, $\tilde{v}_{\lambda_{n}%
}\rightarrow$ $\tilde{v}_{0}\neq0$ weakly in $H^{\frac{m}{2}}$. Since
$\frac{k_{\lambda_{n}}}{\lambda_{n}},\frac{1}{\left\Vert \tilde{v}%
_{\lambda_{n}}\right\Vert _{L_{e}^{2}}}\rightarrow0$, we have $\mathcal{A}%
^{0}\tilde{v}_{0}=0$. So $\tilde{v}_{0}=c_{1}u_{cx}$ for some $c_{1}\neq0$.
But since $\left(  \tilde{v}_{\lambda_{n}},u_{cx}\right)  =0$, we have
$\left(  \tilde{v}_{0},u_{cx}\right)  =0,\ $a contradiction. This establishes
the uniform bound for $\left\Vert v_{\lambda}\right\Vert _{L_{e}^{2}}$. The
equation satisfied by $v_{\lambda}$ is
\[
\mathcal{A}^{\lambda}v_{\lambda}=\frac{k_{\lambda}}{\lambda_{n}}u_{\lambda
}-c_{\lambda}\frac{\mathcal{A}^{\lambda}-\mathcal{A}^{0}}{\lambda}u_{cx}%
=\frac{k_{\lambda}}{\lambda_{n}}u_{\lambda}-c_{\lambda}w_{\lambda}.
\]
Applying Lemma \ref{lemma-apriori} to the above equation, we have $\left\Vert
v_{\lambda}\right\Vert _{H^{\frac{m}{2}}}\leq C$ and thus $v_{\lambda
}\rightarrow$ $v_{0}$ weakly in $H^{\frac{m}{2}}$. By (\ref{limit-w-lb}),
$v_{0}$ satisfies
\[
\mathcal{A}^{0}v_{0}=\mathcal{L}_{0}v_{0}=\frac{1}{c}\left(  \mathcal{M}%
+1\right)  u_{c}.
\]
Taking $\partial_{c}$ of (\ref{solitary-bbm}), we have
\begin{equation}
\mathcal{L}_{0}\partial_{c}u_{c}=-\frac{1}{c}\left(  \mathcal{M}+1\right)
u_{c}. \label{eqn-dc-uc}%
\end{equation}
Thus $\mathcal{L}_{0}\left(  v_{0}+\partial_{c}u_{c}\right)  =0$. Since
$\left(  v_{0},u_{cx}\right)  =\lim_{\lambda\rightarrow0+}\left(  v_{\lambda
},u_{cx}\right)  =0,$ we have
\[
v_{0}=-\partial_{c}u_{c}+d_{0}u_{cx},\ d_{0}=\left(  \partial_{c}u_{c}%
,u_{cx}\right)  \ /\left\Vert u_{cx}\right\Vert _{L^{2}}^{2}.
\]
Similar to the proof of $\left\Vert u_{\lambda}-u_{0}\right\Vert _{H^{\frac
{m}{2}}}\rightarrow0$, we have $\left\Vert v_{\lambda}-v_{0}\right\Vert
_{H^{\frac{m}{2}}}\rightarrow0.$ We rewrite
\[
u_{\lambda}=c_{\lambda}u_{cx}+\lambda v_{\lambda}=\bar{c}_{\lambda}%
u_{cx}+\lambda\bar{v}_{\lambda},
\]
where $\bar{c}_{\lambda}=c_{\lambda}+\lambda d_{0}$, $\bar{v}_{\lambda
}=v_{\lambda}-d_{0}u_{cx}$. Then $\bar{c}_{\lambda}\rightarrow1$, $\bar
{v}_{\lambda}\rightarrow-\partial_{c}u_{c},$when $\lambda\rightarrow0+$.

Now we compute $\lim_{\lambda\rightarrow0+}\frac{k_{\lambda}}{\lambda^{2}}$.
From (\ref{eqn-1st}), we have
\[
\mathcal{A}^{0}\frac{u_{\lambda}}{\lambda^{2}}+\frac{\mathcal{A}^{\lambda
}-\mathcal{A}^{0}}{\lambda}\left(  \frac{\bar{c}_{\lambda}}{\lambda}%
u_{cx}+\bar{v}_{\lambda}\right)  =\frac{k_{\lambda}}{\lambda^{2}}u_{\lambda}.
\]
Taking the inner product of above with $u_{cx}$, we have%
\[
\frac{k_{\lambda}}{\lambda^{2}}\left(  u_{\lambda},u_{cx}\right)  =\bar
{c}_{\lambda}\left(  \frac{\mathcal{A}^{\lambda}-\mathcal{A}^{0}}{\lambda^{2}%
}u_{cx},u_{cx}\right)  +\left(  \frac{\mathcal{A}^{\lambda}-\mathcal{A}^{0}%
}{\lambda}\bar{v}_{\lambda},u_{cx}\right)  =\bar{c}_{\lambda}I_{1}+I_{2}.
\]
For the first term, we have
\begin{align*}
I_{1}  &  =\left(  \frac{\mathcal{A}^{\lambda}-\mathcal{A}^{0}}{\lambda^{2}%
}u_{cx},u_{cx}\right)  =\left(  \frac{w_{\lambda}\left(  x\right)  }{\lambda
},u_{cx}\right)  =\frac{1}{c}\left(  \frac{\mathcal{D}}{\left(  \lambda
-\mathcal{D}\right)  \lambda}\left(  \mathcal{M}+1\right)  u_{c},u_{cx}\right)
\\
&  =\frac{1}{c}\left(  \frac{1}{\lambda-\mathcal{D}}\left(  \mathcal{M}%
+1\right)  u_{c},u_{cx}\right)  -\frac{1}{c\lambda}\left(  \left(
\mathcal{M}+1\right)  u_{c},u_{cx}\right) \\
&  =-\frac{1}{c^{2}}\left(  \left(  \mathcal{E}^{\lambda,-}-1\right)  \left(
\mathcal{M}+1\right)  u_{c},u_{c}\right)  -\frac{1}{c^{2}\lambda}\left(
u_{c}+f\left(  u_{c}\right)  ,u_{cx}\right) \\
&  =-\frac{1}{c^{2}}\left(  \left(  \mathcal{E}^{\lambda,-}-1\right)  \left(
\mathcal{M}+1\right)  u_{c},u_{c}\right)  \rightarrow\frac{1}{c^{2}}\left(
\left(  \mathcal{M}+1\right)  u_{c},u_{c}\right)  ,\ \text{when\ }%
\lambda\rightarrow0+.
\end{align*}
For the second term, we have
\begin{align*}
I_{2}  &  =\left(  \frac{\mathcal{A}^{\lambda}-\mathcal{A}^{0}}{\lambda}%
\bar{v}_{\lambda},u_{cx}\right)  =\frac{1}{c}\left(  \frac{1}{\lambda
-\mathcal{D}}\left(  1+f^{\prime}\left(  u_{c}\right)  \right)  \bar
{v}_{\lambda},u_{cx}\right) \\
&  =-\frac{1}{c^{2}}\left(  \frac{\mathcal{D}}{\lambda-\mathcal{D}}\left(
1+f^{\prime}\left(  u_{c}\right)  \right)  \bar{v}_{\lambda},u_{c}\right)
=-\frac{1}{c^{2}}\left(  \left(  \mathcal{E}^{\lambda,-}-1\right)  \left(
1+f^{\prime}\left(  u_{c}\right)  \right)  \bar{v}_{\lambda},u_{c}\right) \\
&  \rightarrow-\frac{1}{c^{2}}\left(  \left(  1+f^{\prime}\left(
u_{c}\right)  \right)  \partial_{c}u_{c},u_{c}\right)  ,\ \text{when }%
\lambda\rightarrow0\text{. }%
\end{align*}
Thus
\begin{align*}
\lim_{\lambda\rightarrow0+}\frac{k_{\lambda}}{\lambda^{2}}  &  =\lim
_{\lambda\rightarrow0+}\frac{\bar{c}_{\lambda}I_{1}+I_{2}}{\left(  u_{\lambda
},u_{cx}\right)  }\\
&  =\left[  \frac{1}{c^{2}}\left(  \left(  \mathcal{M}+1\right)  u_{c}%
,u_{c}\right)  -\frac{1}{c^{2}}\left(  \left(  1+f^{\prime}\left(
u_{c}\right)  \right)  \partial_{c}u_{c},u_{c}\right)  \right]  /\left\Vert
u_{cx}\right\Vert _{L^{2}}^{2}\\
&  =-\frac{1}{c}\left(  \left(  \mathcal{M}+1\right)  \partial_{c}u_{c}%
,u_{c}\right)  =-\frac{1}{c}\frac{dP}{dc},
\end{align*}
since by (\ref{eqn-dc-uc})%
\[
\left(  \mathcal{M}+1\right)  u_{c}-\left(  1+f^{\prime}\left(  u_{c}\right)
\right)  \partial_{c}u_{c}=-c\left(  \mathcal{M}+1\right)  \partial_{c}u_{c}.
\]

\end{proof}

\section{Regularized Boussinesq type}

Consider a solitary wave $u\left(  x,t\right)  =u_{c}\left(  x-ct\right)
\ \left(  c^{2}>1\right)  $ of the regularized Boussinesq (RBou) type equation
(\ref{RBOU}). Then $u_{c}$ satisfies the equation%
\begin{equation}
\mathcal{M}u_{c}+\left(  1-\frac{1}{c^{2}}\right)  u_{c}-\frac{1}{c^{2}%
}f\left(  u_{c}\right)  =0\text{.} \label{solitary-RBou}%
\end{equation}
The linearized equation in the traveling frame $\left(  x-ct,t\right)  $ is
\begin{equation}
\left(  \partial_{t}-c\partial_{x}\right)  ^{2}\left(  u+\mathcal{M}u\right)
-\partial_{x}^{2}\left(  u+f^{\prime}\left(  u_{c}\right)  u\right)  =0.
\label{L-RBou}%
\end{equation}
For a growing mode $e^{\lambda t}u\left(  x\right)  $ $\left(
\operatorname{Re}\lambda>0\right)  $, $u\left(  x\right)  $ satisfies
\begin{equation}
\left(  \lambda-c\partial_{x}\right)  ^{2}\left(  u+\mathcal{M}u\right)
-\partial_{x}^{2}\left(  u+f^{\prime}\left(  u_{c}\right)  u\right)  =0.
\label{spectral-Rbou}%
\end{equation}
So we define the following dispersion operator $\mathcal{A}^{\lambda}%
:H^{m}\rightarrow L^{2}$ $\left(  \lambda>0\right)  $
\[
\mathcal{A}^{\lambda}u=\mathcal{M}u+u-\left(  \frac{\partial_{x}}%
{\lambda-c\partial_{x}}\right)  ^{2}\left(  u+f^{\prime}\left(  u_{c}\right)
u\right)
\]
and the existence of a purely growing mode is reduced to find $\lambda$ $>0$
such that $\mathcal{A}^{\lambda}$ has a nontrivial kernel. Since when
$\lambda\rightarrow0+,$
\begin{equation}
\frac{\partial_{x}}{\lambda-c\partial_{x}}=\frac{\mathcal{D}}{\lambda
-\mathcal{D}}=\frac{1}{c}\left(  \mathcal{E}^{\lambda,-}-1\right)
\rightarrow-\frac{1}{c}\text{ strongly in }L^{2}\text{,}
\label{convergence-frac-dx}%
\end{equation}
the zero limit of the operator$\mathcal{A}^{\lambda}$ is
\begin{equation}
\mathcal{L}_{0}:=\mathcal{M}+\left(  1-\frac{1}{c^{2}}\right)  -\frac{1}%
{c^{2}}f^{\prime}\left(  u_{c}\right)  . \label{L0-RBou}%
\end{equation}

The proof of Theorem \ref{THM: main} for RBou case is very similar to the BBM
case, so we only give a sketch of the proof of the moving kernel formula.

\begin{lemma}
\label{lemma-moving-RBou}Assume $\ker\mathcal{L}^{0}=$ $\left\{
u_{cx}\right\}  $. For $\lambda>0$ small enough, let $k_{\lambda}\in
\mathbf{R}$ to be the only eigenvalue of $\mathcal{A}^{\lambda}$ near zero.
Then we have$\ $
\[
\lim_{\lambda\rightarrow0+}\frac{k_{\lambda}}{\lambda^{2}}=-\frac{1}{c^{2}%
}\frac{dP}{dc}/\left\Vert u_{cx}\right\Vert _{L^{2}}^{2},
\]
where
\begin{equation}
P\left(  c\right)  =c\left(  \left(  \mathcal{M+}1\right)  u_{c},u_{c}\right)
. \label{P-RBou}%
\end{equation}

\end{lemma}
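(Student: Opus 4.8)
The plan is to transcribe the BBM argument of Lemma~\ref{lemma-moving}: every functional-analytic input---the strong convergence $\mathcal{A}^{\lambda}\to\mathcal{L}_{0}$ established via~(\ref{convergence-frac-dx}), the RBou analogue of the a~priori estimate of Lemma~\ref{lemma-apriori}, and the weak-limit and local-compactness machinery---carries over unchanged, so only the algebra of $\mathcal{A}^{\lambda}-\mathcal{A}^{0}$ is new. Using $\partial_{x}/(\lambda-c\partial_{x})=\tfrac1c(\mathcal{E}^{\lambda,-}-1)$, squaring, and subtracting $\tfrac1{c^{2}}$ gives
\[
\mathcal{A}^{\lambda}-\mathcal{A}^{0}=-\tfrac{1}{c^{2}}\,\mathcal{E}^{\lambda,-}\bigl(\mathcal{E}^{\lambda,-}-2\bigr)\bigl(1+f^{\prime}(u_{c})\bigr),
\]
which still tends to $0$ strongly by Lemma~\ref{lemma-e-lb}. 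I would first normalize the eigenfunction by $\|u_{\lambda}\|_{L_{e}^{2}}=1$, extract a weak $H^{m/2}$ limit in $\ker\mathcal{L}_{0}=\{u_{cx}\}$, upgrade it to strong $H^{m/2}$ convergence as in the BBM proof, and renormalize so $u_{\lambda}\to u_{cx}$.

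The second stage shows $k_{\lambda}/\lambda\to0$ and expands $u_{\lambda}$. Pairing $(\mathcal{A}^{\lambda}-k_{\lambda})u_{\lambda}=0$, divided by $\lambda$, with $u_{cx}$ and transferring the multiplier onto $u_{cx}$, the right side tends to a constant times $\int\frac{d}{dx}\!\left(\tfrac12u_{c}^{2}+F(u_{c})\right)dx=0$, so $k_{\lambda}/\lambda\to0$. Writing $u_{\lambda}=c_{\lambda}u_{cx}+\lambda v_{\lambda}$ with $(v_{\lambda},u_{cx})=0$, I would prove the uniform bound $\|v_{\lambda}\|_{L_{e}^{2}}\le C$ by the blow-up contradiction of Lemma~\ref{lemma-moving}; the key input is that, using $(1+f^{\prime}(u_{c}))u_{cx}=c\,\mathcal{D}(\mathcal{M}+1)u_{c}$ from differentiating~(\ref{solitary-RBou}) in $x$,
\[
w_{\lambda}:=\tfrac{\mathcal{A}^{\lambda}-\mathcal{A}^{0}}{\lambda}u_{cx}=-\tfrac1c\bigl(\mathcal{E}^{\lambda,-}-1\bigr)\bigl(\mathcal{E}^{\lambda,-}-2\bigr)(\mathcal{M}+1)u_{c}\longrightarrow-\tfrac{2}{c}(\mathcal{M}+1)u_{c}
\]
strongly in $L^{2}$. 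The relation $\mathcal{A}^{\lambda}v_{\lambda}=\tfrac{k_{\lambda}}{\lambda}u_{\lambda}-c_{\lambda}w_{\lambda}$ then yields $\mathcal{L}_{0}v_{0}=\tfrac{2}{c}(\mathcal{M}+1)u_{c}$; differentiating~(\ref{solitary-RBou}) in $c$ gives the moving-kernel relation
\[
\mathcal{L}_{0}\,\partial_{c}u_{c}=-\tfrac{2}{c}(\mathcal{M}+1)u_{c},
\]
so $v_{0}=-\partial_{c}u_{c}+d_{0}u_{cx}$, and after absorbing $d_{0}u_{cx}$ I obtain $u_{\lambda}=\bar c_{\lambda}u_{cx}+\lambda\bar v_{\lambda}$ with $\bar c_{\lambda}\to1$, $\bar v_{\lambda}\to-\partial_{c}u_{c}$.

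The last stage computes $k_{\lambda}/\lambda^{2}$. Dividing by $\lambda$ once more, pairing with $u_{cx}$, and using $\mathcal{L}_{0}=\mathcal{L}_{0}^{*}$, $\mathcal{L}_{0}u_{cx}=0$, gives $\frac{k_{\lambda}}{\lambda^{2}}(u_{\lambda},u_{cx})=\bar c_{\lambda}I_{1}+I_{2}$ with $I_{1}=(\tfrac{\mathcal{A}^{\lambda}-\mathcal{A}^{0}}{\lambda^{2}}u_{cx},u_{cx})$ and $I_{2}=(\tfrac{\mathcal{A}^{\lambda}-\mathcal{A}^{0}}{\lambda}\bar v_{\lambda},u_{cx})$. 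Here the square genuinely changes the BBM computation, and this is the main obstacle: with $g=(\mathcal{M}+1)u_{c}$ the partial-fraction identity
\[
\frac{(\mathcal{E}^{\lambda,-}-1)(\mathcal{E}^{\lambda,-}-2)}{\lambda}=\frac{\lambda}{(\lambda-\mathcal{D})^{2}}-\frac{3}{\lambda-\mathcal{D}}+\frac{2}{\lambda}
\]
produces a double pole and a singular $\tfrac{2}{\lambda}$-term. The $\tfrac{2}{\lambda}$-term is annihilated outright since $(g,u_{cx})=((\mathcal{M}+1)u_{c},u_{cx})=0$, a perfect derivative by~(\ref{solitary-RBou}); for the other two I transfer the adjoint onto $u_{cx}=\tfrac1c\mathcal{D}u_{c}$, so the factor $\mathcal{D}$ tames the $k=0$ singularity---the $\lambda/(\lambda-\mathcal{D})^{2}$-term becomes $\tfrac1c(g,\tfrac{\lambda\mathcal{D}}{(\lambda+\mathcal{D})^{2}}u_{c})$ with symbol bounded by $1/2$ and tending pointwise to $0$, hence vanishing by dominated convergence, while the $-3/(\lambda-\mathcal{D})$-term converges, giving $I_{1}\to\tfrac{3}{c^{2}}(g,u_{c})$. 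An identical analysis of $I_{2}$, with $\bar v_{\lambda}\to-\partial_{c}u_{c}$ and the symbol of $(-2\mathcal{D}-\lambda)\mathcal{D}/(\lambda+\mathcal{D})^{2}$ tending to $-2$, gives $I_{2}\to-\tfrac{2}{c^{3}}\bigl((1+f^{\prime}(u_{c}))\partial_{c}u_{c},u_{c}\bigr)$. Finally, substituting $(1+f^{\prime}(u_{c}))\partial_{c}u_{c}=c^{2}(\mathcal{M}+1)\partial_{c}u_{c}+2c(\mathcal{M}+1)u_{c}$ from the moving-kernel relation and dividing by $(u_{\lambda},u_{cx})\to\|u_{cx}\|_{L^{2}}^{2}$, the combination $\bar c_{\lambda}I_{1}+I_{2}$ reduces, using $P^{\prime}(c)=((\mathcal{M}+1)u_{c},u_{c})+2c((\mathcal{M}+1)\partial_{c}u_{c},u_{c})$ for $P=c((\mathcal{M}+1)u_{c},u_{c})$, to $-\tfrac{1}{c^{2}}P^{\prime}/\|u_{cx}\|_{L^{2}}^{2}$, the claimed formula. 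The only genuine work beyond the BBM case is this careful bookkeeping of the double-pole limits, so that the surviving pieces reassemble exactly into $dP/dc$.
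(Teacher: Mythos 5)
Your proposal is correct and follows essentially the same route as the paper's proof of Lemma \ref{lemma-moving-RBou}, which itself transcribes the BBM argument of Lemma \ref{lemma-moving}: same normalization and a priori estimate, same proof that $k_{\lambda}/\lambda\rightarrow0$, same expansion $u_{\lambda}=\bar{c}_{\lambda}u_{cx}+\lambda\bar{v}_{\lambda}$ with $\bar{v}_{\lambda}\rightarrow-\partial_{c}u_{c}$ via $\mathcal{L}_{0}\partial_{c}u_{c}=-\frac{2}{c}\left(\mathcal{M}+1\right)u_{c}$, and the same limits $I_{1}\rightarrow\frac{3}{c^{2}}\left(\left(\mathcal{M}+1\right)u_{c},u_{c}\right)$, $I_{2}\rightarrow-\frac{2}{c^{3}}\left(\left(1+f^{\prime}\left(u_{c}\right)\right)\partial_{c}u_{c},u_{c}\right)$. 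The only difference is cosmetic: you carry the multiplier in the factored form $\left(\mathcal{E}^{\lambda,-}-1\right)\left(\mathcal{E}^{\lambda,-}-2\right)$ and use a partial-fraction split whose $2/\lambda$ pole is killed by $\left(\left(\mathcal{M}+1\right)u_{c},u_{cx}\right)=0$, whereas the paper expands into $\frac{2}{\lambda-\mathcal{D}}-\frac{\lambda}{\left(\lambda-\mathcal{D}\right)^{2}}$ and cancels the same singular term as $\frac{1}{\lambda}\left(\mathcal{D}\left(\mathcal{M}+1\right)u_{c},u_{c}\right)=0$; these are algebraically equivalent.
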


\begin{proof}
For $\lambda>0$ small enough, let
\[
u_{\lambda}\in H^{m}\left(  \mathbf{R}\right)  ,\ k_{\lambda}\in
\mathbf{R},\ \lim_{\lambda\rightarrow0+}k_{\lambda}=0,
\]
such that $\left(  \mathcal{A}^{\lambda}-k_{\lambda}\right)  u_{\lambda}=0$.
We normalize $u_{\lambda}$ by setting $\left\Vert u_{\lambda}\right\Vert
_{L_{e}^{2}}=1$. Then as in the BBM case, we have $\left\Vert u_{\lambda
}\right\Vert _{H^{\frac{m}{2}}}\leq C$ and $u_{\lambda}\rightarrow u_{cx}\ $in
$H^{\frac{m}{2}}$ by a renormalization, under our assumption that
$\ker\mathcal{L}_{0}=$ $\left\{  u_{cx}\right\}  $.

First, we show that $\lim_{\lambda\rightarrow0+}\frac{k_{\lambda}}{\lambda}%
=0$. As in the BBM case, we have
\[
\frac{k_{\lambda}}{\lambda}\left(  u_{\lambda},u_{cx}\right)  =\left(
\frac{\mathcal{A}^{\lambda}-\mathcal{A}^{0}}{\lambda}u_{\lambda}%
,u_{cx}\right)  :=m\left(  \lambda\right)  ,
\]
where
\[
\frac{\mathcal{A}^{\lambda}-\mathcal{A}^{0}}{\lambda}=\frac{1}{c^{2}}\left(
\frac{2}{\lambda-\mathcal{D}}-\frac{\lambda}{\left(  \lambda-\mathcal{D}%
\right)  ^{2}}\right)  \left(  1+f^{\prime}\left(  u_{c}\right)  \right)  .
\]
We have
\begin{align*}
m\left(  \lambda\right)   &  =\frac{1}{c^{2}}\left(  \left[  \frac{2}%
{\lambda-\mathcal{D}}-\frac{\lambda}{\left(  \lambda-\mathcal{D}\right)  ^{2}%
}\right]  \left(  1+f^{\prime}\left(  u_{c}\right)  \right)  u_{\lambda
},u_{cx}\right) \\
&  =\frac{1}{c^{2}}\left(  \left(  1+f^{\prime}\left(  u_{c}\right)  \right)
u_{\lambda},\left(  \frac{2}{\lambda+\mathcal{D}}-\frac{\lambda}{\left(
\lambda+\mathcal{D}\right)  ^{2}}\right)  u_{cx}\right) \\
&  =\frac{1}{c^{3}}\left(  \left(  1+f^{\prime}\left(  u_{c}\right)  \right)
u_{\lambda},\left(  \frac{2\mathcal{D}}{\lambda+\mathcal{D}}-\frac
{\lambda\mathcal{D}}{\left(  \lambda+\mathcal{D}\right)  ^{2}}\right)
u_{c}\right) \\
&  =\frac{1}{c^{3}}\left(  \left(  1+f^{\prime}\left(  u_{c}\right)  \right)
u_{\lambda},\left(  1-\mathcal{E}^{\lambda,+}\right)  \left(  2-\mathcal{E}%
^{\lambda,+}\right)  u_{c}\right) \\
&  \rightarrow\frac{2}{c^{3}}\left(  \left(  1+f^{\prime}\left(  u_{c}\right)
\right)  u_{cx},u_{c}\right)  =0\text{,}%
\end{align*}
and thus
\[
\lim_{\lambda\rightarrow0+}\frac{k_{\lambda}}{\lambda}=\lim_{\lambda
\rightarrow0+}\frac{m\left(  \lambda\right)  }{\left(  u_{\lambda}%
,u_{cx}\right)  }=0.
\]
Similarly to the BBM case, we can show that $u_{\lambda}=\bar{c}_{\lambda
}u_{cx}+\lambda\bar{v}_{\lambda}$, with $\bar{c}_{\lambda}\rightarrow1$,
$\bar{v}_{\lambda}\rightarrow-\partial_{c}u_{c}$ in $H^{\frac{m}{2}},\ $when
$\lambda\rightarrow0+$. In the proof, we use the facts that%
\begin{align*}
w_{\lambda}\left(  x\right)   &  =\frac{\mathcal{A}^{\lambda}-\mathcal{A}^{0}%
}{\lambda}u_{cx}=\frac{1}{c^{2}}\left(  \frac{2}{\lambda-\mathcal{D}}%
-\frac{\lambda}{\left(  \lambda-\mathcal{D}\right)  ^{2}}\right)  \left(
1+f^{\prime}\left(  u_{c}\right)  \right)  u_{cx}\\
&  =\left(  \frac{2}{\lambda-\mathcal{D}}-\frac{\lambda}{\left(
\lambda-\mathcal{D}\right)  ^{2}}\right)  \left(  \mathcal{M}+1\right)
u_{cx}=\frac{1}{c}\left(  \frac{2\mathcal{D}}{\lambda-\mathcal{D}}%
-\frac{\lambda\mathcal{D}}{\left(  \lambda-\mathcal{D}\right)  ^{2}}\right)
\left(  \mathcal{M}+1\right)  u_{c}\\
&  =\frac{1}{c}\left(  \mathcal{E}^{\lambda,-}-1\right)  \left(
2-\mathcal{E}^{\lambda,-}\right)  \left(  \mathcal{M}+1\right)  u_{c}%
\rightarrow-\frac{2}{c}\left(  \mathcal{M}+1\right)  u_{c}\text{, when
}\lambda\rightarrow0+\text{. }%
\end{align*}
and
\begin{equation}
\mathcal{L}_{0}\partial_{c}u_{c}=-\frac{2}{c^{3}}\left(  u_{c}+f\left(
u_{c}\right)  \right)  =-\frac{2}{c}\left(  \mathcal{M}+1\right)  u_{c}.
\label{eqn-dc-uc-RBou}%
\end{equation}
Next, we compute $\lim_{\lambda\rightarrow0+}\frac{k_{\lambda}}{\lambda^{2}}$
by using
\[
\frac{k_{\lambda}}{\lambda^{2}}\left(  u_{\lambda},u_{cx}\right)  =\bar
{c}_{\lambda}\left(  \frac{\mathcal{A}^{\lambda}-\mathcal{A}^{0}}{\lambda^{2}%
}u_{cx},u_{cx}\right)  +\left(  \frac{\mathcal{A}^{\lambda}-\mathcal{A}^{0}%
}{\lambda}\bar{v}_{\lambda},u_{cx}\right)  =\bar{c}_{\lambda}I_{1}+I_{2}.
\]
For the first term, we have
\begin{align*}
I_{1}  &  =\left(  \frac{\mathcal{A}^{\lambda}-\mathcal{A}^{0}}{\lambda^{2}%
}u_{cx},u_{cx}\right)  =\left(  \frac{w_{\lambda}\left(  x\right)  }{\lambda
},u_{cx}\right) \\
&  =\frac{1}{c}\left(  \left[  \frac{2\mathcal{D}}{\left(  \lambda
-\mathcal{D}\right)  \lambda}-\frac{\mathcal{D}}{\left(  \lambda
-\mathcal{D}\right)  ^{2}}\right]  \left(  \mathcal{M}+1\right)  u_{c}%
,u_{cx}\right) \\
&  =-\frac{1}{c^{2}}\left(  \left[  \frac{2\mathcal{D}^{2}}{\left(
\lambda-\mathcal{D}\right)  \lambda}-\frac{\mathcal{D}^{2}}{\left(
\lambda-\mathcal{D}\right)  ^{2}}\right]  \left(  \mathcal{M}+1\right)
u_{c},u_{c}\right) \\
&  =-\frac{2}{c^{2}}\left(  \left(  \mathcal{E}^{\lambda,-}-1\right)  \left(
\mathcal{M}+1\right)  u_{c},u_{c}\right)  +\frac{1}{c^{2}\lambda}\left(
\mathcal{D}\left(  \mathcal{M}+1\right)  u_{c},u_{c}\right)  \ \\
&  \ \ \ \ \ \ \ \ \ \ \ \ \ \ \ +\frac{1}{c^{2}}\left(  \left(
\mathcal{E}^{\lambda,-}-1\right)  ^{2}\left(  \mathcal{M}+1\right)
u_{c},u_{c}\right) \\
&  \rightarrow\frac{3}{c^{2}}\left(  \left(  \mathcal{M}+1\right)  u_{c}%
,u_{c}\right)  \text{, when }\lambda\rightarrow0+,
\end{align*}
since $\mathcal{E}^{\lambda,-}\rightarrow0$ and
\[
\left(  \mathcal{D}\left(  \mathcal{M}+1\right)  u_{c},u_{c}\right)  =c\left(
u_{cx},\left(  \mathcal{M}+1\right)  u_{c}\right)  =\frac{1}{c}\left(
u_{cx},u_{c}+f\left(  u_{c}\right)  \right)  =0.
\]
For the second term, we have
\begin{align*}
I_{2}  &  =\left(  \frac{\mathcal{A}^{\lambda}-\mathcal{A}^{0}}{\lambda}%
\bar{v}_{\lambda},u_{cx}\right)  =\frac{1}{c^{2}}\left(  \left(  \frac
{2}{\lambda-\mathcal{D}}-\frac{\lambda}{\left(  \lambda-\mathcal{D}\right)
^{2}}\right)  \left(  1+f^{\prime}\left(  u_{c}\right)  \right)  \bar
{v}_{\lambda},u_{cx}\right) \\
&  =-\frac{1}{c^{3}}\left(  \left(  \frac{2\mathcal{D}}{\lambda-\mathcal{D}%
}-\frac{\lambda\mathcal{D}}{\left(  \lambda-\mathcal{D}\right)  ^{2}}\right)
\left(  1+f^{\prime}\left(  u_{c}\right)  \right)  \bar{v}_{\lambda}%
,u_{c}\right) \\
&  =-\frac{1}{c^{3}}\left(  \left(  \mathcal{E}^{\lambda,-}-1\right)  \left(
2-\mathcal{E}^{\lambda,-}\right)  \left(  1+f^{\prime}\left(  u_{c}\right)
\right)  \bar{v}_{\lambda},u_{c}\right) \\
&  \rightarrow-\frac{2}{c^{3}}\left(  \left(  1+f^{\prime}\left(
u_{c}\right)  \right)  \partial_{c}u_{c},u_{c}\right)  ,\ \text{when }%
\lambda\rightarrow0+\text{. }%
\end{align*}
Thus
\begin{align*}
\lim_{\lambda\rightarrow0+}\frac{k_{\lambda}}{\lambda^{2}}  &  =\lim
_{\lambda\rightarrow0+}\frac{\bar{c}_{\lambda}I_{1}+I_{2}}{\left(  u_{\lambda
},u_{cx}\right)  }\\
&  =\left[  \frac{3}{c^{2}}\left(  \left(  \mathcal{M}+1\right)  u_{c}%
,u_{c}\right)  -\frac{2}{c^{3}}\left(  \left(  1+f^{\prime}\left(
u_{c}\right)  \right)  \partial_{c}u_{c},u_{c}\right)  \right]  /\left\Vert
u_{cx}\right\Vert _{L^{2}}^{2}\\
&  =\left[  -\frac{1}{c^{2}}\left(  \left(  \mathcal{M}+1\right)  u_{c}%
,u_{c}\right)  -\frac{2}{c}\left(  \left(  \mathcal{M}+1\right)  \partial
_{c}u_{c},u_{c}\right)  \right]  /\left\Vert u_{cx}\right\Vert _{L^{2}}^{2}\\
&  =-\frac{1}{c^{2}}\frac{dP}{dc}/\left\Vert u_{cx}\right\Vert _{L^{2}}^{2},
\end{align*}
since by (\ref{eqn-dc-uc-RBou})%
\[
\left(  1+f^{\prime}\left(  u_{c}\right)  \right)  \partial_{c}u_{c}%
=c^{2}\left(  \mathcal{M}+1\right)  \partial_{c}u_{c}+2c\left(  \mathcal{M}%
+1\right)  u_{c}.
\]

\end{proof}

As a corollary of the above proof, we show Theorem \ref{thm-transition} for
the RBou case. We skip the proof of Theorem \ref{thm-transition} for the BBM
and KDV cases, since they are very similar. Theorem \ref{thm-transition}
(RBou) follows from the next lemma.

\begin{lemma}
Assume $\ker\mathcal{L}_{0}=\left\{  u_{cx}\right\}  $. If there is a sequence
of purely growing modes $e^{\lambda_{n}t}u_{n}\left(  x\right)  $ $\left(
\lambda_{n}>0\right)  \ $for solitary waves $u_{c_{n}}\ $of (\ref{RBOU}), with
$\lambda_{n}\rightarrow0+$, $c_{n}\rightarrow c_{0}$, then we must have
$P^{\prime}\left(  c_{0}\right)  =0$.
\end{lemma}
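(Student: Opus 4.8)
The plan is to leverage the moving kernel formula (Lemma \ref{lemma-moving-RBou}) together with the continuation/counting argument used in the proof of Theorem \ref{THM: main}. The statement to prove is essentially the converse direction of the instability machinery: if a sequence of purely growing modes $e^{\lambda_n t}u_n$ with $\lambda_n \to 0+$ exists, then $P'(c_0) = 0$. The key observation is that each $\lambda_n > 0$ together with the equation $\mathcal{A}^{\lambda_n} u_n = 0$ (here $\mathcal{A}^{\lambda_n}$ is built from the solitary wave $u_{c_n}$) means $\mathcal{A}^{\lambda_n}$ has a nontrivial kernel, i.e.\ $0$ is an eigenvalue. So I am watching the eigenvalue $k_{\lambda_n}$ of $\mathcal{A}^{\lambda_n}$ near the origin, and I am told it equals zero along this sequence.

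\textbf{The main steps I would carry out.} First, I would note that since $\ker\mathcal{L}_0 = \{u_{cx}\}$ and $\lambda_n \to 0+$, the asymptotic perturbation theory (Proposition \ref{Prop-asym-perturb}) applies: for each $c_n$ close to $c_0$ there is a unique real eigenvalue $k_{\lambda_n}$ of $\mathcal{A}^{\lambda_n}$ near zero, and the existence of the growing mode forces $k_{\lambda_n} = 0$. Second, I would apply Lemma \ref{lemma-moving-RBou} (the moving kernel formula) at the parameter value $c_n$, which gives
\[
\lim_{\lambda \to 0+} \frac{k_\lambda}{\lambda^2} = -\frac{1}{c_n^2}\, P'(c_n)\,\big/\,\|u_{c_n,x}\|_{L^2}^2.
\]
Since $k_{\lambda_n} = 0$ exactly (not merely vanishing to higher order), the left-hand side along the diagonal sequence is driven to zero, and by continuity of $P'(c)$ and of $\|u_{cx}\|_{L^2}^2$ in $c$ (using $c_n \to c_0$) the limit identifies $-\frac{1}{c_0^2} P'(c_0)/\|u_{c_0,x}\|_{L^2}^2$. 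Concluding $P'(c_0) = 0$ then follows, since $\|u_{c_0,x}\|_{L^2}^2 \neq 0$.

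\textbf{The subtle point, which I expect to be the main obstacle,} is that the moving kernel formula as stated is a limit for \emph{fixed} $c$ as $\lambda \to 0+$, whereas here both $\lambda_n \to 0+$ and $c_n \to c_0$ simultaneously. I cannot simply substitute $k_{\lambda_n} = 0$ into the formula for fixed $c$; I must instead extract from the derivation of Lemma \ref{lemma-moving-RBou} the uniform estimates (uniform in $c$ near $c_0$) that let me pass to the diagonal limit. Concretely, I would revisit the proof of the moving kernel formula and verify that the a priori bounds (Lemma \ref{lemma-apriori}), the weak-limit extraction $u_\lambda \to u_{cx}$, and the convergences $I_1, I_2$ all hold uniformly for $c$ in a neighborhood of $c_0$, so that evaluating at $(\lambda_n, c_n)$ yields $k_{\lambda_n}/\lambda_n^2 \to -\frac{1}{c_0^2} P'(c_0)/\|u_{c_0,x}\|_{L^2}^2$. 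Since the left side is identically zero, this forces $P'(c_0) = 0$. The regularity here is the continuous (indeed smooth) dependence of the solitary wave branch $u_c$ on $c$, which is already implicit in the paper's use of $\partial_c u_c$.

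\textbf{In summary,} the argument is a rigidity/necessary-condition counterpart to the instability construction: the moving kernel formula pins the leading-order behavior of the near-zero eigenvalue to $P'(c)$, and the hypothesis that purely growing modes persist into the $\lambda \to 0+$ limit forces that eigenvalue to be exactly zero, so its leading coefficient $P'(c_0)$ must vanish. The only real work beyond quoting Lemma \ref{lemma-moving-RBou} is justifying the simultaneous limit in $(\lambda, c)$, which reduces to checking that the estimates in that lemma's proof are locally uniform in $c$.
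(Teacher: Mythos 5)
Your proposal is correct and follows essentially the same route as the paper: the paper's own proof is exactly the re-derivation you describe in your ``subtle point'' paragraph, namely rerunning the moving-kernel computation of Lemma \ref{lemma-moving-RBou} along the diagonal sequence $(\lambda_n,c_n)$, checking that the a priori bounds and the limits of $I_1,I_2$ go through with $c_n\to c_0$, and concluding $0=\bar{c}_nI_1+I_2\to-\frac{1}{c_0^{2}}P'(c_0)/\Vert u_{c_0x}\Vert_{L^2}^{2}$. The only cosmetic difference is that the paper works directly with the eigenfunction equation $\mathcal{A}^{\lambda_n,c_n}u_n=0$ (so the left-hand side is identically zero from the start) rather than routing through the intermediate eigenvalue $k_{\lambda_n}$ and Proposition \ref{Prop-asym-perturb}.
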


\begin{proof}
The proof is almost the same as that of Lemma \ref{lemma-moving-RBou}, so we
only sketch it. The only difference is that now the computations depend on the
parameter $c_{n}$. Denote $\mathcal{E}_{n}^{\pm}=\frac{\mathcal{\lambda}_{n}%
}{\lambda_{n}\pm c_{n}\partial_{x}}$, then by the same argument as in the
proof of Lemma \ref{lemma-e-lb}, we have $s-\lim_{n\rightarrow\infty
}\mathcal{E}_{n}^{\pm}=0$. Then the operator
\[
\mathcal{A}^{\lambda_{n},c_{n}}=\mathcal{M}+1-\left(  \frac{\partial_{x}%
}{\lambda_{n}-c_{n}\partial_{x}}\right)  ^{2}\left(  1+f^{\prime}\left(
u_{c_{n}}\right)  \right)
\]
converges to
\[
\mathcal{L}_{0}:=\mathcal{M}+\left(  1-\frac{1}{c_{0}^{2}}\right)  -\frac
{1}{c_{0}^{2}}f^{\prime}\left(  u_{c_{0}}\right)
\]
strongly in $L^{2}$. We have $\mathcal{A}^{\lambda_{n},c_{n}}u_{n}=0$ and we
normalize $u_{n}\ $by$\ \left\Vert u_{n}\right\Vert _{L_{e_{n}}^{2}}=1$, where
$e_{n}=\left\vert f^{\prime}\left(  u_{c_{n}}\right)  \right\vert ^{2}$. As
before, it can be shown that $\left\Vert u_{n}\right\Vert _{H^{\frac{m}{2}}%
}\leq C$ (independent of $n$) and $u_{n}\rightarrow u_{c_{0}x}$ in
$H^{\frac{m}{2}}.$ Moreover, we have $u_{n}=\bar{c}_{n}u_{c_{n}x}+\lambda
_{n}\bar{v}_{n}$, where $\bar{c}_{n}\rightarrow1$ and $\bar{v}_{n}%
\rightarrow-\partial_{c}u_{c}|_{c_{0}}$ in $H^{\frac{m}{2}}$. From
$\mathcal{A}^{\lambda_{n},c_{n}}u_{n}=0$, it follows that
\[
0=\bar{c}_{n}\left(  \frac{\mathcal{A}^{\lambda_{n},c_{n}}-\mathcal{A}%
^{0,c_{n}}}{\lambda_{n}^{2}}u_{c_{n}x},u_{c_{n}x}\right)  +\left(
\frac{\mathcal{A}^{\lambda_{n},c_{n}}-\mathcal{A}^{0,c_{n}}}{\lambda_{n}}%
\bar{v}_{n},u_{c_{n}x}\right)  =\bar{c}_{n}I_{1}+I_{2},
\]
where$\ $%
\[
\mathcal{A}^{0,c_{n}}=\mathcal{M}+\left(  1-\frac{1}{c_{n}^{2}}\right)
-\frac{1}{c_{n}^{2}}f^{\prime}\left(  u_{c_{n}}\right)  .
\]
By the same computations as in the proof of Lemma \ref{lemma-moving-RBou},
\begin{align*}
I_{1}  &  =-\frac{2}{c_{n}^{2}}\left(  \left(  \mathcal{E}_{n}^{-}-1\right)
\left(  \mathcal{M}+1\right)  u_{c_{n}},u_{c_{n}}\right)  +\frac{1}{c_{n}^{2}%
}\left(  \left(  \mathcal{E}_{n}^{-}-1\right)  ^{2}\left(  \mathcal{M}%
+1\right)  u_{c_{n}},u_{c_{n}}\right) \\
&  \rightarrow\frac{3}{c_{0}^{2}}\left(  \left(  \mathcal{M}+1\right)
u_{c_{0}},u_{c_{0}}\right)  \text{, when }n\rightarrow\infty
\end{align*}
and
\[
I_{2}\rightarrow-\frac{2}{c_{0}^{3}}\left(  \left(  1+f^{\prime}\left(
u_{c_{0}}\right)  \right)  \partial_{c}u_{c}|_{c_{0}},u_{c_{0}}\right)
,\ \text{when }n\rightarrow\infty.
\]
Thus
\[
0=\lim_{n\rightarrow\infty}\left(  \bar{c}_{n}I_{1}+I_{2}\right)  =-\frac
{1}{c_{0}^{2}}\frac{dP}{dc}\left(  c_{0}\right)
\]
and the Lemma is proved.
\end{proof}

\section{KDV type}

Consider a solitary wave $u\left(  x,t\right)  =u_{c}\left(  x-ct\right)
\ \left(  c>0\right)  $ of the KDV type equations (\ref{kdv}). Then $u_{c}$
satisfies the equation%

\begin{equation}
\mathcal{M}u_{c}+cu_{c}-f\left(  u_{c}\right)  =0\text{.} \label{solitary-kdv}%
\end{equation}
The linearized equation is
\begin{equation}
\left(  \partial_{t}-c\partial_{x}\right)  u+\partial_{x}\left(  f^{\prime
}\left(  u_{c}\right)  u-\mathcal{M}u\right)  =0. \label{L-KDV}%
\end{equation}
and for a growing mode solution $e^{\lambda t}u\left(  x\right)  $ $\left(
\operatorname{Re}\lambda>0\right)  $, $u\left(  x\right)  $ satisfies
\begin{equation}
\left(  \lambda-c\partial_{x}\right)  u+\partial_{x}\left(  f^{\prime}\left(
u_{c}\right)  u-\mathcal{M}u\right)  =0. \label{spectrum-KDV}%
\end{equation}
We define the following dispersion operator $\mathcal{A}^{\lambda}%
:H^{m}\rightarrow L^{2}$ $\left(  \operatorname{Re}\lambda>0\right)  $
\[
\mathcal{A}^{\lambda}u=cu+\frac{c\partial_{x}}{\lambda-c\partial_{x}}\left(
f^{\prime}\left(  u_{c}\right)  u-\mathcal{M}u\right)
\]
and as before the existence of a purely growing mode is reduced to find
$\lambda>0$ such that $\mathcal{A}^{\lambda}$ has a nontrivial kernel. When
$\lambda\rightarrow0+$, $\mathcal{A}^{\lambda}$ converges to the zero-limit
operator
\begin{equation}
\mathcal{L}_{0}:=\mathcal{M}+c-f^{\prime}\left(  u_{c}\right)  .
\label{L0-KDV}%
\end{equation}

The proof of Theorem \ref{THM: main} for KDV is similar to the BBM and RBou
cases. So we only indicate some differences due to the different structure of
the operator $\mathcal{A}^{\lambda}$. To prove the essential spectrum bound%
\begin{equation}
\sigma_{\text{ess}}\left(  \mathcal{A}^{\lambda}\right)  \subset\left\{
z\ |\operatorname{Re}\lambda\geq\frac{1}{2}c\right\}  , \label{ess-KDV}%
\end{equation}
we need to establish analogues of Lemmas \ref{lemma-quadrtic-bound-lb} and
\ref{lemma-commu-d}. First, we note that, for any $u\in H^{m}\left(
\mathbf{R}\right)  ,$
\begin{align}
\operatorname{Re}\left(  -\frac{c\partial_{x}}{\lambda-c\partial_{x}%
}\mathcal{M}u,u\right)   &  =\operatorname{Re}\int\frac{-ick}{\lambda
-ick}\alpha\left(  k\right)  \left\vert \hat{\phi}\left(  k\right)
\right\vert ^{2}dk\label{inter4}\\
&  =\int\frac{\left(  ck\right)  ^{2}}{\lambda^{2}+\left(  ck\right)  ^{2}%
}\alpha\left(  k\right)  \left\vert \hat{\phi}\left(  k\right)  \right\vert
^{2}dk\geq0.\nonumber
\end{align}
So by estimates as in the proof of Lemma \ref{lemma-quadrtic-bound-lb}, for
any sequence
\[
\left\{  u_{n}\right\}  \in H^{m}\left(  \mathbf{R}\right)  ,\ \left\Vert
u_{n}\right\Vert _{2}=1,\ supp\ u_{n}\subset\left\{  x|\ \left\vert
x\right\vert \geq n\right\}  ,
\]
$\ $and any complex number $z\ $with $\operatorname{Re}z\leq\frac{1}{2}c$, we
have
\[
\operatorname{Re}\left(  \left(  \mathcal{A}^{\lambda}-z\right)  u_{n}%
,u_{n}\right)  \geq\frac{1}{4}c,
\]
when $n\ $is large enough. Since
\[
\left[  \mathcal{A}^{\lambda},\chi_{d}\right]  =\left(  1-\mathcal{E}%
^{\lambda,-}\right)  \left[  \mathcal{M},\chi_{d}\right]  +\left[
\mathcal{E}^{\lambda,-},\chi_{d}\right]  \left(  f^{\prime}\left(
u_{c}\right)  -\mathcal{M}\right)  ,
\]
the conclusion of Lemma \ref{lemma-commu-d} still holds true by the same
proof. Thus the essential spectrum bound (\ref{ess-KDV}) is obtained as
before. The non-existence of growing modes for large $\lambda$ is proved in
the following lemma.

\begin{lemma}
\label{lemma-no-eigen-infy-KDV}There exists $\Lambda>0$, such that when
$\lambda>\Lambda$, $\mathcal{A}^{\lambda}$ has no eigenvalues in $\left\{
z|\ \operatorname{Re}z\leq0\right\}  $.
\end{lemma}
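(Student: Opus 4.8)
The plan is to run the same contradiction scheme as in Lemma \ref{lemma-no-eigen-infy}, but to replace the weak-limit argument by a self-consistent reformulation on the Fourier side, which is cleaner in the KDV setting. Suppose the conclusion fails: there are $\lambda_n\to\infty$, $k_n\in\mathbb{C}$ with $\operatorname{Re}k_n\le0$, and $0\ne u_n\in H^m(\mathbf{R})$ solving $\left(\mathcal{A}^{\lambda_n}-k_n\right)u_n=0$. Writing $g=f'(u_c)$, I would multiply the eigenvalue equation by $\lambda_n-c\partial_x$ and take Fourier transforms to solve for $\hat u_n$ in terms of $\widehat{gu_n}$:
\[
\hat u_n(k)=\tau_n(k)\,\widehat{gu_n}(k),\qquad \tau_n(k)=\frac{-cik}{(c-k_n)(\lambda_n-ick)-cik\alpha(k)} .
\]
Consequently $\psi_n:=gu_n\in L^2$ satisfies the fixed-point relation $\psi_n=(gT_n)\psi_n$, where $T_n$ is the Fourier multiplier with symbol $\tau_n$; moreover $\psi_n\ne0$, since $\psi_n\equiv0$ would force $\hat u_n\equiv0$ through the relation above.

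The whole argument then reduces to the operator-norm bound $\left\|gT_n\right\|_{L^2\to L^2}\to0$. Indeed, once $\left\|gT_n\right\|<1$ the operator $I-gT_n$ is invertible by a Neumann series, contradicting $\psi_n\ne0$. Since $g\in L^\infty$ we have $\left\|gT_n\right\|\le\|g\|_{L^\infty}\sup_k|\tau_n(k)|$, so it suffices to show $\sup_k|\tau_n(k)|\to0$. This is precisely where the KDV structure departs from BBM: the coefficient $\frac{c\partial_x}{\lambda-c\partial_x}$ multiplying $\mathcal{M}$ tends to $0$ strongly as $\lambda\to\infty$ (Lemma \ref{lemma-e-lb}), so the limit operator is mere multiplication by the constant $c$ and one gets \emph{no} $\mathcal{M}$-coercivity; a naive quadratic-form estimate yields only a uniform $L^2$ bound on $u_n$, hence no compactness with which to extract a nonzero weak limit. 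The multiplier estimate sidesteps this difficulty altogether.

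The key step, and the main obstacle, is a lower bound on the denominator $D_n(k):=(c-k_n)(\lambda_n-ick)-cik\alpha(k)$ that is \emph{uniform in $\operatorname{Im}k_n$}. I would get it by projecting onto the direction $\overline{(\lambda_n-ick)}$: a direct computation gives $\operatorname{Re}\!\big(D_n(k)\,\overline{(\lambda_n-ick)}\big)=\operatorname{Re}(c-k_n)\,(\lambda_n^2+c^2k^2)+c^2k^2\alpha(k)$, and since $\operatorname{Re}k_n\le0$ forces $\operatorname{Re}(c-k_n)\ge c$, this yields
\[
|\tau_n(k)|=\frac{c|k|}{|D_n(k)|}\le\frac{|k|}{\sqrt{\lambda_n^2+c^2k^2}+\dfrac{c\,k^2\alpha(k)}{\sqrt{\lambda_n^2+c^2k^2}}} .
\]
The decisive feature is that this bound does not see $\operatorname{Im}k_n$ at all. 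This matters because $\operatorname{Im}k_n$ need \emph{not} stay bounded --- a high-frequency eigenfunction could in principle push $\operatorname{Im}k_n\to\infty$, so no limiting argument in $k_n$ is available; the projection onto $\overline{(\lambda_n-ick)}$ is exactly what rescues the estimate.

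From here $\sup_k|\tau_n|\to0$ follows by splitting at a fixed large frequency $K$ (chosen past the threshold in hypothesis (ii), so that $\alpha(k)\ge a|k|^m$ for $|k|\ge K$). For $|k|\le K$ the first denominator term gives $|\tau_n(k)|\le|k|/\lambda_n\le K/\lambda_n\to0$. For $|k|\ge K$, the AM--GM inequality $\sqrt{\lambda_n^2+c^2k^2}+\frac{ck^2\alpha}{\sqrt{\lambda_n^2+c^2k^2}}\ge 2|k|\sqrt{c\,\alpha(k)}$ gives $|\tau_n(k)|\le\big(2\sqrt{c\,a}\,K^{m/2}\big)^{-1}$, uniformly in $n$. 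Letting $K\to\infty$ after $\lambda_n\to\infty$ makes $\sup_k|\tau_n(k)|$ arbitrarily small, completing the contradiction and hence the proof. I expect the only genuinely delicate point to be the uniform-in-$\operatorname{Im}k_n$ denominator bound; everything else is the bookkeeping of the frequency split.
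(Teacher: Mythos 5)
Your proposal is correct, but it takes a genuinely different route from the paper's. The paper's proof is a direct quadratic-form estimate: from $(\mathcal{A}^{\lambda_n}-k_n)u_n=0$ one gets $\operatorname{Re}(\mathcal{A}^{\lambda_n}u_n,u_n)=\operatorname{Re}k_n\left\Vert u_n\right\Vert_{L^2}^2\le 0$, and this is contradicted by a positive lower bound on the form, built from the identity (\ref{inter4}), a Cauchy--Schwarz absorption of the $f'(u_c)$ term, and a frequency split at $\left\vert k\right\vert=\delta\lambda_n$; no weak limits and no bound on $\operatorname{Im}k_n$ are needed there either, since only $\operatorname{Re}k_n$ enters. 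You instead diagonalize the eigenvalue equation on the Fourier side and recast it as a Birman--Schwinger-type fixed point $\psi_n=gT_n\psi_n$ for $\psi_n=f'(u_c)u_n$, which you kill by the operator-norm bound $\left\Vert gT_n\right\Vert\le\left\Vert f'(u_c)\right\Vert_{L^\infty}\sup_k\left\vert\tau_n(k)\right\vert\to0$. I checked your key computation: $\operatorname{Re}\bigl(D_n(k)\overline{(\lambda_n-ick)}\bigr)=\operatorname{Re}(c-k_n)(\lambda_n^2+c^2k^2)+c^2k^2\alpha(k)$ is right, the resulting bound $\left\vert D_n(k)\right\vert\ge c\sqrt{\lambda_n^2+c^2k^2}+c^2k^2\alpha(k)/\sqrt{\lambda_n^2+c^2k^2}$ in particular guarantees $D_n(k)\ne0$ (so the division defining $\tau_n$ is legitimate and $\psi_n=0$ forces $u_n=0$), and the low/high frequency split with AM--GM is sound. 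Both arguments ultimately exploit the same two mechanisms --- $\lambda_n$ dominates for $\left\vert k\right\vert\lesssim K$, $\alpha(k)$ dominates for $\left\vert k\right\vert\gtrsim K$ --- but yours works at the level of the pointwise symbol rather than the integrated form. What your version buys: it shows the entire closed left half-plane lies in the resolvent set of $\mathcal{A}^{\lambda}$ for large $\lambda$, with explicit resolvent bounds, rather than merely excluding eigenvalues, and it makes transparent why no control of $\operatorname{Im}k_n$ is required. What the paper's version buys: it is shorter and reuses the same positivity identity (\ref{inter4}) that also drives the a priori estimate (\ref{estimate-KDV}). Your side remark about the failure of the BBM-style weak-limit argument is also accurate: for KDV the factor $\frac{c\partial_x}{\lambda-c\partial_x}$ damps $\mathcal{M}$ as $\lambda\to\infty$, so the form gives no uniform $H^{m/2}$ bound, which is exactly why the paper switches to a purely quantitative estimate in this case.
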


\begin{proof}
Suppose otherwise, then there exists a sequence $\left\{  \lambda_{n}\right\}
\rightarrow+\infty,\ \left\{  k_{n}\right\}  \in\mathbb{C},\ $and $\left\{
u_{n}\right\}  \in$ $H^{m}\left(  \mathbf{R}\right)  $, such that
$\operatorname{Re}k_{n}\leq0$ and $\left(  \mathcal{A}^{\lambda_{n}}%
-k_{n}\right)  u_{n}=0$. Let $K>0$ be such that $\alpha\left(  k\right)  \geq
a\left\vert k\right\vert ^{m}$ when $\left\vert k\right\vert \geq K$. For any
$\delta,\varepsilon>0$, and large $n$, we have $\delta\lambda_{n}\geq K$ and
\begin{align*}
0  &  \geq\operatorname{Re}\left(  \mathcal{A}^{\lambda_{n}}u_{n},u_{n}\right)
\\
&  \geq\int\frac{\left(  ck\right)  ^{2}\alpha\left(  k\right)  }{\lambda
_{n}^{2}+\left(  ck\right)  ^{2}}\left\vert \hat{u}_{n}\left(  k\right)
\right\vert ^{2}dk+c\left\Vert u_{n}\right\Vert _{L^{2}}^{2}-\max\left\vert
f^{\prime}\left(  u_{c}\right)  \right\vert \left\Vert u_{n}\right\Vert
_{L^{2}}\left\Vert \frac{c\partial_{x}}{\lambda_{n}+c\partial_{x}}%
u_{n}\right\Vert _{L^{2}}\\
&  \geq\int\frac{\left(  ck\right)  ^{2}\alpha\left(  k\right)  }{\lambda
_{n}^{2}+\left(  ck\right)  ^{2}}\left\vert \hat{u}_{n}\left(  k\right)
\right\vert ^{2}dk+\left(  c-\varepsilon\right)  \left\Vert u_{n}\right\Vert
_{L^{2}}^{2}-\frac{\max\left\vert f^{\prime}\left(  u_{c}\right)  \right\vert
^{2}}{4\varepsilon}\int\frac{\left(  ck\right)  ^{2}}{\lambda_{n}^{2}+\left(
ck\right)  ^{2}}\left\vert \hat{u}_{n}\left(  k\right)  \right\vert ^{2}dk\\
&  \geq\int\frac{\left(  ck\right)  ^{2}\alpha\left(  k\right)  }{\lambda
_{n}^{2}+\left(  ck\right)  ^{2}}\left\vert \hat{u}_{n}\left(  k\right)
\right\vert ^{2}dk+\left(  c-\varepsilon\right)  \left\Vert u_{n}\right\Vert
_{L^{2}}^{2}-\frac{\max\left\vert f^{\prime}\left(  u_{c}\right)  \right\vert
^{2}}{4\varepsilon a\left(  \delta\lambda_{n}\right)  ^{m}}\int_{\left\vert
k\right\vert \geq\delta\lambda_{n}}\frac{\left(  ck\right)  ^{2}\alpha\left(
k\right)  }{\lambda_{n}^{2}+\left(  ck\right)  ^{2}}\left\vert \hat{u}%
_{n}\left(  k\right)  \right\vert ^{2}dk\\
\ \ \ \  &  \ \ \ \ \ -\frac{\max\left\vert f^{\prime}\left(  u_{c}\right)
\right\vert ^{2}c^{2}\delta^{2}}{4\varepsilon}\int_{\left\vert k\right\vert
\leq\delta\lambda_{n}}\left\vert \hat{u}_{n}\left(  k\right)  \right\vert
^{2}dk\\
&  \geq\left(  1-\frac{\max\left\vert f^{\prime}\left(  u_{c}\right)
\right\vert ^{2}}{4\varepsilon a\left(  \delta\lambda_{n}\right)  ^{m}%
}\right)  \int\frac{\left(  ck\right)  ^{2}\alpha\left(  k\right)  }%
{\lambda_{n}^{2}+\left(  ck\right)  ^{2}}\left\vert \hat{u}_{n}\left(
k\right)  \right\vert ^{2}dk+\left(  c-\varepsilon-\frac{\max\left\vert
f^{\prime}\left(  u_{c}\right)  \right\vert ^{2}c^{2}\delta^{2}}{4\varepsilon
}\right)  \left\Vert u_{n}\right\Vert _{L^{2}}^{2}\\
&  >0,\ \ \text{when }n\text{ is large enough,}%
\end{align*}
by choosing $\varepsilon,\delta>0$ such that
\[
\ c-\varepsilon-\frac{\max\left\vert f^{\prime}\left(  u_{c}\right)
\right\vert c^{2}\delta^{2}}{4\varepsilon}>0.
\]
This is a contradiction and the lemma is proved.
\end{proof}

The eigenvalues of $\mathcal{A}^{\lambda}$ for small $\lambda$ are also
studied by the asymptotic perturbation theory. The required analogues of
Lemmas \ref{lemma-commu-F} and \ref{lemma-quadratic bd-asymp} can be proved in
the same way. The discrete eigenvalues of $\mathcal{A}^{0}=\mathcal{L}_{0}$
are perturbed to get the eigenvalues of $\mathcal{A}^{\lambda}$ for small
$\lambda$, in the sense of Proposition \ref{Prop-asym-perturb}. The
instability criterion in Theorem \ref{THM: main} can be proved in the same
way, by deriving the following moving kernel formula: for $\lambda>0$ small
enough, let $k_{\lambda}\in\mathbf{R}$ to be the only eigenvalue of
$\mathcal{A}^{\lambda}$ near zero, then$\ $
\begin{equation}
\lim_{\lambda\rightarrow0+}\frac{k_{\lambda}}{\lambda^{2}}=-\frac{dP}%
{dc}/\left\Vert u_{cx}\right\Vert _{L^{2}}^{2}, \label{formula-kernel-KDV}%
\end{equation}
where
\begin{equation}
P\left(  c\right)  =\frac{1}{2}\left(  u_{c},u_{c}\right)  . \label{P-KDV}%
\end{equation}
We sketch the proof of (\ref{formula-kernel-KDV}) below. First, similar to
Lemma \ref{lemma-apriori}, we have the following a priori estimate:

For $\lambda>0$ small enough, if $\left(  \mathcal{A}^{\lambda}-z\right)
u=v$, $z\in\mathbb{C}$ with $\operatorname{Re}z\leq\frac{1}{2}c$ and $v\in
L^{2},$ then
\begin{equation}
\left\Vert u\right\Vert _{H^{\frac{m}{2}}}\leq C\left(  \left\Vert
u\right\Vert _{L_{e}^{2}}+\left\Vert v\right\Vert _{L^{2}}\right)  ,
\label{estimate-KDV}%
\end{equation}
for a constant $C$ independent of $\lambda$. To prove (\ref{estimate-KDV}), we
note that for any $\varepsilon>0$
\[
\operatorname{Re}\left(  \mathcal{A}^{\lambda}u,u\right)  -\frac{1}%
{2}c\left\Vert u\right\Vert _{L^{2}}^{2}\leq\left\Vert u\right\Vert _{L^{2}%
}\left\Vert v\right\Vert _{L^{2}}\leq\varepsilon\left\Vert u\right\Vert
_{L^{2}}^{2}+\frac{1}{4\varepsilon}\left\Vert v\right\Vert _{L^{2}}^{2}%
\]
and for any $\delta>0$, when $\lambda\leq cK$,
\begin{align*}
\operatorname{Re}\left(  \mathcal{A}^{\lambda}u,u\right)   &  \geq\int
\frac{\left(  ck\right)  ^{2}\alpha\left(  k\right)  }{\lambda^{2}+\left(
ck\right)  ^{2}}\left\vert \hat{u}\left(  k\right)  \right\vert ^{2}%
dk+c\left\Vert u\right\Vert _{L^{2}}^{2}-\left\Vert u\right\Vert _{L_{e}^{2}%
}\left\Vert u_{n}\right\Vert _{L^{2}}\\
&  \geq\frac{a}{2}\int_{\left\vert k\right\vert \geq K}\left\vert k\right\vert
^{m}\left\vert \hat{u}\left(  k\right)  \right\vert ^{2}dk+c\left\Vert
u\right\Vert _{L^{2}}^{2}-\varepsilon\left\Vert u\right\Vert _{L^{2}}%
^{2}-\frac{1}{4\varepsilon}\left\Vert u\right\Vert _{L_{e}^{2}}^{2}\\
&  \geq\min\left\{  \frac{a}{2},\frac{\delta}{K^{m}}\right\}  \int\left\vert
k\right\vert ^{m}\left\vert \hat{u}\left(  k\right)  \right\vert
^{2}dk+\left(  c-\delta-\varepsilon\right)  \left\Vert u\right\Vert _{L^{2}%
}^{2}-\frac{1}{4\varepsilon}\left\Vert u\right\Vert _{L_{e}^{2}}^{2}.
\end{align*}
Thus by choosing $\delta,\varepsilon$ to be small, we get the estimate
(\ref{estimate-KDV})$.$

To prove (\ref{formula-kernel-KDV}), we follow the same procedures as in the
BBM and RBou cases. Let $u_{\lambda}\in H^{m}\left(  \mathbf{R}\right)  $ be
the solution of $\left(  \mathcal{A}^{\lambda}-k_{\lambda}\right)  u_{\lambda
}=0$ with $k_{\lambda}\in\mathbf{R}$ and $\lim_{\lambda\rightarrow
0+}k_{\lambda}=0$. We normalize $u_{\lambda}$ by setting $\left\Vert
u_{\lambda}\right\Vert _{L_{e}^{2}}=1$. Then by (\ref{estimate-KDV}), we have
$\left\Vert u_{\lambda}\right\Vert _{H^{\frac{m}{2}}}\leq C$ and as before,
after a renormalization $u_{\lambda}\rightarrow u_{0}=u_{cx}$ in $H^{\frac
{m}{2}}$. We have $\lim_{\lambda\rightarrow0+}\frac{k_{\lambda}}{\lambda}=0$,
since
\begin{align*}
\frac{k_{\lambda}}{\lambda}\left(  u_{\lambda},u_{cx}\right)   &  =\left(
\frac{\mathcal{A}^{\lambda}-\mathcal{A}^{0}}{\lambda}u_{\lambda}%
,u_{cx}\right)  =\left(  \frac{1}{\lambda-\mathcal{D}}\left(  f^{\prime
}\left(  u_{c}\right)  -\mathcal{M}\right)  u_{\lambda},u_{cx}\right) \\
&  =\frac{1}{c}\left(  \left(  f^{\prime}\left(  u_{c}\right)  -\mathcal{M}%
\right)  u_{\lambda},\frac{\mathcal{D}}{\lambda-\mathcal{D}}u_{c}\right) \\
&  \rightarrow-\frac{1}{c}\left(  \left(  f^{\prime}\left(  u_{c}\right)
-\mathcal{M}\right)  u_{cx},u_{c}\right)  =-\left(  u_{cx},u_{c}\right)
=0\text{.}%
\end{align*}
Similarly as before, we can show that $u_{\lambda}=\bar{c}_{\lambda}%
u_{cx}+\lambda\bar{v}_{\lambda}$, with $\bar{c}_{\lambda}\rightarrow1$,
$\bar{v}_{\lambda}\rightarrow-\partial_{c}u_{c}$ in $H^{\frac{m}{2}},\ $when
$\lambda\rightarrow0+$. In the proof, we use the facts that
\begin{align*}
w_{\lambda}\left(  x\right)   &  =\frac{\mathcal{A}^{\lambda}-\mathcal{A}^{0}%
}{\lambda}u_{cx}=\frac{1}{\lambda-\mathcal{D}}\left(  f^{\prime}\left(
u_{c}\right)  -\mathcal{M}\right)  u_{cx}\\
&  =\frac{\mathcal{D}}{\lambda-\mathcal{D}}u_{c}\rightarrow-u_{c}\text{, when
}\lambda\rightarrow0+,
\end{align*}
and $\mathcal{L}_{0}\partial_{c}u_{c}=-u_{c}$. Now
\[
\frac{k_{\lambda}}{\lambda^{2}}\left(  u_{\lambda},u_{cx}\right)  =\bar
{c}_{\lambda}\left(  \frac{\mathcal{A}^{\lambda}-\mathcal{A}^{0}}{\lambda^{2}%
}u_{cx},u_{cx}\right)  +\left(  \frac{\mathcal{A}^{\lambda}-\mathcal{A}^{0}%
}{\lambda}\bar{v}_{\lambda},u_{cx}\right)  =\bar{c}_{\lambda}I_{1}+I_{2}%
\]
and
\begin{align*}
I_{1}  &  =\left(  \frac{w_{\lambda}\left(  x\right)  }{\lambda}%
,u_{cx}\right)  =\left(  \frac{1}{\left(  \lambda-\mathcal{D}\right)  \lambda
}\left(  f^{\prime}\left(  u_{c}\right)  -\mathcal{M}\right)  u_{cx}%
,u_{cx}\right) \\
&  =-\frac{1}{c}\left(  \frac{1}{\lambda-\mathcal{D}}\left(  f^{\prime}\left(
u_{c}\right)  -\mathcal{M}\right)  u_{cx},u_{c}\right)  +\frac{1}{c\lambda
}\left(  \left(  f^{\prime}\left(  u_{c}\right)  -\mathcal{M}\right)
u_{cx},u_{c}\right) \\
&  =-\frac{1}{c}\left(  \left(  \mathcal{E}^{\lambda,-}-1\right)  u_{c}%
,u_{c}\right)  \rightarrow\frac{1}{c}\left(  u_{c},u_{c}\right)  ,
\end{align*}%
\begin{align*}
I_{2}  &  =\left(  \frac{\mathcal{A}^{\lambda}-\mathcal{A}^{0}}{\lambda}%
\bar{v}_{\lambda},u_{cx}\right)  =-\frac{1}{c}\left(  \frac{\mathcal{D}%
}{\lambda-\mathcal{D}}\left(  f^{\prime}\left(  u_{c}\right)  -\mathcal{M}%
\right)  \bar{v}_{\lambda},u_{c}\right) \\
&  \rightarrow-\frac{1}{c}\left(  \left(  f^{\prime}\left(  u_{c}\right)
-\mathcal{M}\right)  \partial_{c}u_{c},u_{c}\right)  =-\frac{1}{c}\left(
c\partial_{c}u_{c}+u_{c},u_{c}\right)  ,
\end{align*}
so
\[
\lim_{\lambda\rightarrow0+}\frac{k_{\lambda}}{\lambda^{2}}=\lim_{\lambda
\rightarrow0+}\frac{\bar{c}_{\lambda}I_{1}+I_{2}}{\left(  u_{\lambda}%
,u_{cx}\right)  }=-\left(  \partial_{c}u_{c},u_{c}\right)  /\left(
u_{cx},u_{cx}\right)  =-\frac{dP}{dc}/\left\Vert u_{cx}\right\Vert _{L^{2}%
}^{2}\text{.}%
\]

\section{Discussions}

\begin{center}
\textit{(a) About the spectral assumption for }$\mathcal{L}_{0}$
\end{center}

When $\mathcal{M}=-\frac{d}{dx^{2}}$, the assumption (\ref{assum-kernel}) that
$\ker\left(  \mathcal{L}_{0}\right)  =\left\{  u_{cx}\right\}  $ is true
because the second order ODE $\mathcal{L}_{0}\psi=0\ $has two solutions which
decay and grow at infinity respectively, and thus $u_{cx}$ is the only
decaying solution. Moreover, the solitary waves in such case can be shown to
be positive and single-humped. Thus by the Sturm-Liouville theory for second
order ODE operators, $n^{-}\left(  \mathcal{L}_{0}\right)  =1$ since $u_{cx}$
has exactly one zero. The proof of (\ref{assum-kernel})$\ $for nonlocal
dispersive operator $\mathcal{M}$ is much more delicate. In (\cite{albert92},
\cite{albert-et-91}), (\ref{assum-kernel}) is proved for solitary waves of
some KDV type equations, such as the intermediate long-wave equation
(\cite{ILW-eqn}) with
\[
f\left(  u\right)  =u^{2}\ \text{and\ }\alpha\left(  k\right)  =k\coth\left(
kH\right)  -H^{-1}.
\]
The assumption (\ref{assum-kernel}) is related to the bifurcation of solitary
waves, in the sense that $\ker\mathcal{L}_{0}=\left\{  u_{cx}\right\}  $
implies the nonexistence of secondary bifurcations at $c$, that is, the
solitary wave branch $u_{c}\left(  x\right)  $ is locally unique. Even in
cases of multiple branches of solitary waves, (\ref{assum-kernel}) is still
valid in each branch. We note that $\ker\mathcal{L}_{0}$ also monitors the
changes of $n^{-}\left(  \mathcal{L}_{0}\right)  $ when $c$ is changed. For
example, when (\ref{assum-kernel}) is valid in a certain range of $c$,
$n^{-}\left(  \mathcal{L}_{0}\right)  $ must remain unchanged in this range.
Since otherwise, by continuation there is a crossing of eigenvalues through
origin at some $c$, which increase the dimension of $\ker\mathcal{L}_{0}.$This
observation has been used in some problems (\cite{albert-et-91},
\cite{lin-soli-water}) to get $n^{-}\left(  \mathcal{L}_{0}\right)  $ for
large waves from small waves for which $n^{-}\left(  \mathcal{L}_{0}\right)  $
is computable. At secondary bifurcation and turning points, the increase of
$\ker\left(  \mathcal{L}_{0}\right)  $ signals the increase or decrease of
$n^{-}\left(  \mathcal{L}_{0}\right)  $ when these transition points are
crossed$.$One such example is the solitary waves for full water wave problem
(\cite{lin-soli-water}), for which the infinitely many turning points makes
$n^{-}\left(  \mathcal{L}_{0}\right)  $ to increase without bound by a result
of Plotnikov.

The assumption (\ref{assum-kernel}) is also required in all existing proof of
orbital stability (\cite{gss87}, \cite{bss87}, \cite{wein-87}).

\begin{center}
\textit{(b) The sign-changing symbol}
\end{center}

We assume $\alpha\left(  k\right)  \geq0$ in our proof of Theorem
\ref{THM: main}. The proof can be easily modified to treat sign-changing
symbols. Let $-\gamma=\inf\alpha\left(  k\right)  <0$. Consider solitary wave
solutions of KDV, BBM, and RBou type equations with
\begin{equation}
c>\gamma,\ 1-\frac{1}{c}>\gamma\ \ \ \text{and\ \ }1-\frac{1}{c^{2}}%
>\gamma\label{condition-c}%
\end{equation}
respectively. The condition (\ref{condition-c}) on $c$ is to ensure that the
essential spectrum of $\mathcal{L}_{0}$ lies in the positive axis, which is
required to get decaying solitary waves, such as in \cite{amt2} and
\cite{abr-benjamin} for fifth order KDV and Benjamin equations with
$\alpha\left(  k\right)  =-k^{2}+\delta k^{4}$ and $-\left\vert k\right\vert
+\delta k^{2}$ respectively. Denote $\mathcal{\tilde{M}}$ to be the multiplier
operator with the nonnegative symbol $\tilde{\alpha}\left(  k\right)
=\alpha\left(  k\right)  +\gamma$. The proof of Theorem \ref{THM: main}
remains unchanged, by replacing $\mathcal{M}$ with $\mathcal{\tilde{M}}%
-\gamma$ and using the nonnegative symbol $\tilde{\alpha}\left(  k\right)  $
in estimates. The same estimates still go through because of the condition
(\ref{condition-c}). For sign-changing symbols, the solitary waves might be
highly oscillatory in some parameter range (\cite{amt2}, \cite{abr-benjamin}).
It is conceivable that such oscillatory waves are energy saddle with
$n^{-}\left(  \mathcal{L}_{0}\right)  \geq2$, whose stability can not be
studied by the traditional energy minimizer idea. Theorem \ref{THM: main}
gives a sufficient condition for instability in such cases.

\begin{center}
\textit{(c) Comparisons with the Evans function method}
\end{center}

In \cite{pw92-evans}, Pego and Weinstein use the Evans function technique to
obtain the instability criterion $dP/dc<0$ for the case $\mathcal{M}=-\frac
{d}{dx^{2}}$. In their paper, the eigenvalue problems (\ref{spectral-BBM}),
(\ref{spectral-Rbou}) and (\ref{spectrum-KDV}) are written as a first order
system in $x$, depending on the parameter $\lambda$. The Evans function
$D\left(  \lambda\right)  $ is a Wronskian-like function whose zeros in the
right half-plane correspond to unstable eigenvalues, and it measures the
intersection of subspaces of solutions exponentially decaying at $+\infty$ and
$-\infty$. This method was first introduced by J . W. Evans in a series papers
including \cite{evans2} and further studied in \cite{agj-evans}. In
\cite{pw92-evans}$,$it is shown that $D\left(  \lambda\right)  >0$ when
$\lambda>0$ is big enough, $D\left(  0\right)  =D^{\prime}\left(  0\right)
=0$ and
\begin{equation}
D^{\prime\prime}\left(  0\right)  =sgn\ dP/dc. \label{D''}%
\end{equation}
If $dP/dc<0$, then $D\left(  \lambda\right)  <0$ and a continuation argument
yield the vanishing of $D\left(  \lambda\right)  $ at some $\lambda>0$, which
establishes a growing mode. A similar formula as (\ref{D''}) is derived
in\ \cite{bridges-derks}, for problems which can be written in a
multi-symmpletic form. However, there are several restrictions of the Evans
function method: 1) Only the differential operators, that is, with polynomial
symbols, can be treated, since the eigenvalue problems need to be written as a
first order system. 2) The solitary waves must have the exponential decay.
Moreover, certain assumptions for eigenvalues of the asymptotic systems are
required in constructing the Evans function (\cite[(0.6), (0.7)]{pw92-evans}).
Such assumptions need to be checked case by case, and their relations to the
properties of solitary waves are not very clear. By comparison, our approach
apply to very general dispersive operators, in particular, nonlocal operators.
We impose no additional assumptions on the solitary waves. For example, we
allow slowly decaying, highly oscillatory or non-symmetric solitary waves. Our
only assumption (\ref{assum-kernel}) is closed related to the bifurcation of
solitary waves, and it appears to be rather natural in the stability theory.
Moreover, the Evans function method can only be used for the one-dimensional
problems, since otherwise the first order system can not be written. Our
approach has no such restriction and might be useful in the multi-dimensional setting.

Lastly, we note that in Theorem \ref{THM: main}, the instability is determined
by both the sign of $dP/dc$ and the oddness of $n^{-}\left(  \mathcal{L}%
_{0}\right)  $. The later information seems to not appear in the Evans
function method (\cite{pw92-evans}, \cite{bridges-derks}). When $\mathcal{M}$
is a differential operator and $n^{-}\left(  \mathcal{L}_{0}\right)  $ is
even, suppose the Evans function can be constructed and the formula
(\ref{D''}) is shown, then the instability criterion would be still $dP/dc<0$,
which is different from the instability criterion $dP/dc>0$ by Theorem
\ref{THM: main}. It would be interesting to clarify this issue. On possible
such example is the oscillatory solitary waves (\cite{amt2}) of the fifth
order KDV equation.

\begin{center}
\textit{(d) Some future problems}
\end{center}

There are several open issues from our study.

(i) When the instability conditions in Theorem \ref{THM: main} are not
satisfied, the stability of the solitary waves is unknown, except for the case
when $n^{-}\left(  \mathcal{L}_{0}\right)  =1$ in the KDV and BBM case. Such
solitary waves are energy saddles of an even negative index, whose stability
is very subtle and not resolved even for the finite dimensional Hamiltonian
systems. One might need to look for the oscillatory growing modes in such cases.

(ii) The nonlinear stability of solitary waves of energy saddles type is
entirely open. This problem is important because of its direct relevance to
the full water wave problem. Theorem \ref{thm-transition} might be useful to
study spectral stability as a first step. To apply it, one need to understand
when the oscillatory instability can be excluded, which is related to (i).

(iii) Can we get nonlinear instability from linear instability, in the $L^{2}$
norm? This problem is open, even in the KDV and BBM cases where the nonlinear
instability in the energy norm has been proved (\cite{bss87}, \cite{gss90}).
This problem is also relevant to full water waves and other problems for which
the blow-up issue is concerned. The $L^{2}$ instability results could be used
to distinguish the large scale instability of basic waves from the local
blow-up instability due to the structure of the models.

\section{Appendix}

In this Appendix, we describe a different approach than \cite{bss87} and
\cite{ss89} to get nonlinear instability for some dispersive wave models. In
\cite{bss87} and \cite{ss89}, the Liapunov functional method of \cite{gss87}
is extended to get nonlinear instability of solitary waves of KDV and BBM type
equations, under the assumptions $dP/dc<0$ and (\ref{kernel-1}). For the KDV
case, the Liapunov functional constructed in \cite{bss87} becomes
\begin{equation}
A(t)=\int Y\left(  x-x\left(  t\right)  \right)  u\left(  x,t\right)  dx,
\label{defn-A}%
\end{equation}
where $Y\left(  x\right)  =\int_{-\infty}^{x}$ $y\left(  z\right)  \ dz$ and
$y\left(  x\right)  $ is an energy decreasing direction under the constraint
of the constant momentum $Q\left(  u\right)  $. By using the fact that the
solitary wave considered is an (constrainted) energy saddle with negative
index one, it can be shown (\cite{gss87}) that $A^{\prime}\left(  t\right)
\geq\delta>0$ in the orbital neighborhood of the solitary wave. The nonlinear
instability would follow immediately if $A\left(  t\right)  $ is bounded, as
considered in the abstract setting of \cite{gss87}. However, $A\left(
t\right)  $ defined by (\ref{defn-A}) is\ not bounded because the function
$Y\left(  x\right)  $ is not in $L^{2}$ if $\int y\ dx\neq0$. To overcome this
issue, in \cite{bss87} it is shown that $A\left(  t\right)  \leq C\left(
1+t^{\eta}\right)  $ with some $\eta<1$, then the nonlinear instability still
follows. Such an estimate is obtained by showing that the maximum of the
anti-derivative of $u\left(  x,t\right)  $ has a sublinear growth. The same
approach is used in \cite{ss89}, \cite{liu93} and \cite{debd96} (for KP
equations), and the sublinear estimates are sometimes highly nontrivial to
prove. Below, we show that such an estimate can be avoided by using another
approach, which was first introduced in \cite{lin-bubble} for a
Schr\"{o}dinger type problem.

The idea in \cite{lin-bubble} is to make a small correction to the (energy)
decreasing direction $y\left(  x\right)  $ used in constructing the Liapunov
functional $A\left(  t\right)  $. The new direction, still decreasing, has the
additional property that its integral over $\mathbf{R}$ is zero. Then the new
anti-derivative $Y\left(  x\right)  \in L^{2}$ and thus $A\left(  t\right)  $
is bounded which implies nonlinear instability. The correction is through the
following lemma, which is a generalization of \cite[Lemma 5.2]{lin-bubble}$.$

\begin{lemma}
For any $r(x)\neq0\in L^{2}(\mathbf{R}),\ c\in\mathbf{R}$ and ${}m\geq1$,
there exists a sequence $\{y_{n}\}$ in $H^{m}(\mathbf{R})$ such that
\[
(1+|x|)y_{n}(x)\in L^{1}(\mathbf{R}{}),\ \ \int y_{n}\left(  x\right)
\ dx=c,
\]
$y_{n}\rightarrow0$ in $H^{\frac{m}{2}}(\mathbf{R})$ and $(y_{n},r)=0.$
\end{lemma}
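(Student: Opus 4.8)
The plan is to split the desired direction into a mass-carrying part and an orthogonality-correcting part, exploiting the fact that the two constraints $\int y_{n}\,dx=c$ and $(y_{n},r)=0$ are governed by pairings against $1\notin L^{2}$ and $r\in L^{2}$ respectively. The decisive observation is that spreading a fixed bump to width $n$ keeps its integral constant while sending every Sobolev norm to zero; thus the integral constraint can be met by a sequence that already vanishes in $H^{m/2}$, and the orthogonality to $r$ can then be restored by adding a single fixed, arbitrarily small correction.

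Concretely, I would fix $\phi\in C_{0}^{\infty}(\mathbf{R})$ with $\int\phi\,dx=1$ and set $\phi_{n}(x)=\frac{c}{n}\phi(x/n)$. Then $\int\phi_{n}\,dx=c$ for all $n$, while from $\hat{\phi}_{n}(k)=c\,\hat{\phi}(nk)$ one computes
\[
\|\phi_{n}\|_{H^{s}}^{2}=\frac{c^{2}}{n}\int_{\mathbf{R}}\left(1+(\ell/n)^{2}\right)^{s}\left\vert\hat{\phi}(\ell)\right\vert^{2}\,d\ell\longrightarrow 0\quad(n\to\infty)
\]
for every fixed $s\ge 0$, by dominated convergence (the integrand is bounded by $(1+\ell^{2})^{s}|\hat{\phi}|^{2}\in L^{1}$ for $n\ge 1$ and tends pointwise to $|\hat{\phi}|^{2}$). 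In particular $\|\phi_{n}\|_{H^{m/2}}\to 0$ and $\|\phi_{n}\|_{L^{2}}\to 0$, so by Cauchy--Schwarz $(\phi_{n},r)\to 0$. Each $\phi_{n}$ has compact support, so $(1+|x|)\phi_{n}\in L^{1}(\mathbf{R})$.

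The key step is to produce one fixed correction profile that adjusts the inner product against $r$ without disturbing the total integral: I claim there is $\chi\in C_{0}^{\infty}(\mathbf{R})$ with $\int\chi\,dx=0$ and $(\chi,r)=1$. To see this, consider the functionals $L(\psi)=\int\psi\,dx$ and $M(\psi)=(\psi,r)$ on $C_{0}^{\infty}(\mathbf{R})$. Were $M$ to vanish on $\ker L$, then $M=\lambda L$ for some scalar $\lambda$, i.e. $\int\psi\,(r-\lambda)\,dx=0$ for all test functions $\psi$, forcing $r\equiv\lambda$ almost everywhere; since a nonzero constant is not in $L^{2}(\mathbf{R})$ we would get $\lambda=0$ and $r\equiv 0$, contradicting $r\neq 0$. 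Hence $\ker L\not\subseteq\ker M$, and normalizing a witness gives the desired $\chi$.

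Setting $y_{n}:=\phi_{n}-(\phi_{n},r)\,\chi$, all four properties follow: $y_{n}\in C_{0}^{\infty}\subset H^{m}$ with $(1+|x|)y_{n}\in L^{1}$; $\int y_{n}\,dx=c-(\phi_{n},r)\cdot 0=c$; $(y_{n},r)=(\phi_{n},r)-(\phi_{n},r)(\chi,r)=0$; and $\|y_{n}\|_{H^{m/2}}\le\|\phi_{n}\|_{H^{m/2}}+|(\phi_{n},r)|\,\|\chi\|_{H^{m/2}}\to 0$ since both summands vanish. I expect the only genuine obstacle to be the existence of $\chi$, which is precisely where the infinite measure of $\mathbf{R}$ (equivalently, $1\notin L^{2}$) enters; the remaining verifications are the routine scaling behavior of a spreading bump.
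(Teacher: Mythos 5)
Your proposal is correct and follows essentially the same route as the paper: a spreading bump $\frac{c}{n}\phi(x/n)$ carries the integral $c$ while vanishing in $H^{m/2}$, and a fixed zero-integral corrector (your $\chi$, the paper's $\psi_x$ --- the same class of functions, since every compactly supported smooth function with zero mean is such a derivative) restores orthogonality to $r$, with existence of the corrector reduced in both arguments to the fact that a constant in $L^{2}(\mathbf{R})$ must vanish. No gaps.
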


\begin{proof}
We choose $\varphi(x)\in C_{0}^{\infty}\left(  \mathbf{R}\right)  $ such that
$\int\varphi(x)dx=c$. We claim that: there exists $\psi(x)\in C_{0}^{\infty
}(\mathbf{R})$ such that $\left(  \psi_{x},r\right)  \neq0.$Suppose otherwise,
for any $\psi(x)\in C_{0}^{\infty}({\mathbf{R}}^{1}),\ $we have $\left(
\psi_{x},r\right)  =0.$Then $r_{x}=0$ in the distribution sense and thus
$r\equiv$ constant. But $r\in L^{2}$, so $r=0$, which is a contradiction.
Define
\[
y_{n}=\frac{1}{n}\varphi(\frac{x}{n})-a_{n}\psi_{x}(x),
\]
with%
\[
a_{n}=\frac{\int\frac{1}{n}\varphi(\frac{x}{n})r(x)dx}{\left(  \psi
_{x},r\right)  }.
\]
Then $(y_{n},r)=0$ and
\[
|a_{n}|\leq\frac{||\frac{1}{n}\varphi(\frac{1}{n}x)||_{2}||r||_{2}}{\left\vert
\left(  \psi_{x},r\right)  \right\vert }=O\left(  \frac{1}{\sqrt{n}}\right)
\rightarrow0,
\]
when $n\rightarrow\infty$. Let $\varphi_{n}\left(  x\right)  =\varphi(\frac
{x}{n})$, then
\begin{align*}
\left\Vert \frac{1}{n}\varphi(\frac{x}{n})\right\Vert _{H^{\frac{m}{2}}}^{2}
&  =\frac{1}{n}\left\Vert \varphi\right\Vert _{L^{2}}^{2}+\frac{1}{n^{2}%
}\left\Vert \left\vert D\right\vert ^{\frac{m}{2}}\varphi_{n}\right\Vert
_{L^{2}}^{2}=\frac{1}{n}\left\Vert \varphi\right\Vert _{L^{2}}^{2}+\frac
{1}{n^{m+1}}\left\Vert \left\vert D\right\vert ^{\frac{m}{2}}\varphi
\right\Vert _{L^{2}}^{2}\\
&  \rightarrow0\text{, when }n\rightarrow\infty\text{,}%
\end{align*}
where in the above we use the scaling formula%
\[
\left\vert D\right\vert ^{\frac{m}{2}}\varphi_{n}\left(  x\right)  =\frac
{1}{n^{\frac{m}{2}}}\left(  \left\vert D\right\vert ^{\frac{m}{2}}%
\varphi\right)  \left(  \frac{x}{n}\right)
\]
as in the proof of Lemma \ref{lemma-commu-d}. Therefore, $y_{n}\rightarrow0$
in $H^{\frac{m}{2}}(\mathbf{R})$, $(1+|x|)y_{n}\in L^{1}$ and
\[
\int y_{n}(x)dx=\int\frac{1}{n}\varphi(\frac{x}{n})dx=\int\varphi(x)dx=c.
\]
The lemma is proved.
\end{proof}

We start with an (constrainted) energy decreasing direction $y\left(
x\right)  $ with $(1+|x|)y(x)\in L^{1}$, that is,
\[
\left(  \mathcal{H}y,y\right)  <0\text{ and }\left(  y,Q^{\prime}\left(
u_{c}\right)  \right)  =0,
\]
where $\mathcal{H}$ is the second order variation of the argumented energy
functional, for which the solitary wave is a critical point. Let $H^{\frac
{m}{2}}$ to be the energy space, that is, $m$ is the power of the operator
$\mathcal{H}$. Choosing $c=\int y\ dx,\ r=Q^{\prime}\left(  u_{c}\right)  $ in
the above lemma, we get a sequence $\left\{  y_{n}\right\}  \in$ $H^{m}$ with
the properties listed in the lemma. Defining $\tilde{y}_{n}=y-y_{n}$, then we
have
\[
(1+|x|)\tilde{y}_{n}(x)\in L^{1},\ \ \left(  \tilde{y}_{n},P^{\prime}\left(
u_{c}\right)  \right)  =0,\ \int\tilde{y}_{n}\ dx=0\
\]
and $\left(  \mathcal{H}\tilde{y}_{n},\tilde{y}_{n}\right)  <0$ when $n$ is
big enough$.$ Thus for large $n$, the function $\tilde{y}_{n}$ is a new
(constrainted) energy decreasing direction with zero integral. The Liapunov
functional $A\left(  t\right)  $ is defined as in (\ref{defn-A}) by using this
new direction $\tilde{y}_{n}.\ $By \cite[p. 409]{bss87}, $Y\left(  x\right)
=\int_{-\infty}^{x}$ $\tilde{y}_{n}\left(  z\right)  \ dz$ is in $L^{2}$, thus
$A\left(  t\right)  $ is bounded and the nonlinear instability results. Above
approach has the following physical interpretation: if a solitary wave is not
an energy minimizer under the constraint of constant momentum, neither is it
even under the additional constraint of constant mass. This rather general
idea could be useful in proving nonlinear instability of (constarinted) energy
saddles with index one, for other similar problems.

\begin{center}
{\Large Acknowledgement}
\end{center}

This work is supported partly by the NSF grants DMS-0505460 and DMS-0707397.
The author thanks Yue Liu for helpful discussions and Lixin Yan for
discussions on the proof of Lemma \ref{lemma-commu-d}.

\end{document}